\newtheorem{theorem}{Theorem}[section] 
\newtheorem{fact}[theorem]{Fact}
\newtheorem{lemma}[theorem]{Lemma}
\newtheorem{proposition}[theorem]{Proposition}
\newtheorem{corollary}[theorem]{Corollary}
\newtheorem{definition}[theorem]{Definition}
\newtheorem{example}[theorem]{Example}
\begin{document}

\author[M. Nasernejad  and   J. Toledo]{Mehrdad  Nasernejad$^{1,2,*}$  and   Jonathan Toledo$^{3}$}
\title[Demotions of ideals in commutative rings]{Demotions of ideals in commutative rings with applications to normally torsion-freeness}

\subjclass[2010]{13B25, 13F20, 13E05, 05C25, 05E40.} 
\keywords {Demotions of ideals, Normally torsion-free monomial ideals, Reductions of ideals.}

\thanks{$^*$Corresponding author}

\thanks{E-mail addresses:  m$\_$nasernejad@yahoo.com  and  jonathan.toledo@infotec.mx}  
\maketitle

\begin{center}
{\it
$^{1}$Univ. Artois, UR 2462, Laboratoire de Math\'{e}matique de  Lens (LML), \\  F-62300 Lens, France \\ 
$^{2}$Universit\'e  Caen Normandie, ENSICAEN, CNRS, Normandie Univ, GREYC UMR  6072, F-14000 Caen,  France\\
$^{3}$INFOTEC Centro de investigaci\'{o}n e innovaci\'{o}n en informaci\'{o}n \\
 y comunicaci\'{o}n, Ciudad de M\'{e}xico,14050, M\'{e}xico
}
\end{center}

\maketitle

\begin{abstract}
Let  $J \subseteq  I$ be ideals in a commutative Noetherian ring $R$, and $r,s\geq 0$. We say that  $J$ is  a  demotion of $I$ if
$I^rJ^s=I^{r+s}\cap J^s$ for all $r,s\geq 0$. In this paper, we mainly aim to explore this notion in the  polynomial rings. In particular, we investigate 
the relation between the demotion property and normally torsion-freeness. Furthermore, we compare the reductions of ideals and demotions of ideals.
\end{abstract}
\vspace{0.4cm}


\section{Introduction and Overview}

The study of powers and symbolic powers of ideals has long been a central theme in commutative algebra and its interactions with combinatorics and geometry. A classical result due to Brodmann \cite{BR} asserts that for any ideal $I$ in a Noetherian ring $R$, the sequence of associated primes $\{\mathrm{Ass}_R(R/I^s)\}_{s\geq 1}$ stabilizes for sufficiently large $s$. The limiting set, called the \emph{stable set} of associated primes, plays a fundamental role in the asymptotic behavior of ideals. Closely related to this is the notion of \emph{normally torsion-free} ideals.  An ideal $I$ is normally torsion-free if $\mathrm{Ass}_R(R/I^s)\subseteq \mathrm{Ass}_R(R/I)$ for all $s\geq 1.$ 
Although this property is uncommon in full generality, it has attracted considerable attention in the monomial setting. For example, the edge ideal (resp.\ the cover ideal) of a finite graph $G$ is normally torsion-free precisely when $G$ is bipartite \cite{GRV,SVV}. Further results extend this connection to Alexander duals of path ideals in rooted trees, strongly chordal graphs, $t$-spread principal Borel ideals, $t$-spread monomial ideals, and other families of monomial ideals, as can be found in \cite{HN, KHN1, MNQ, N3, NQ, NQKR}. More broadly, it is well-known that normally torsion-free square-free monomial ideals correspond to \emph{Mengerian} clutters, i.e., clutters satisfying the max-flow min-cut property; equivalently, for the edge ideal $I:=I(\mathcal C)$ one has $I^k=I^{(k)}$ for all $k\ge1$ if and only if $\mathcal C$ is Mengerian \cite{V1}. This perspective links commutative algebra with combinatorial optimization and polyhedral theory and places the subject in the context of the Conforti–Cornu\'e{j}ols conjecture.

Motivated by these interactions, this paper pursues three closely related directions. First, we introduce and study the notion of a demotion between ideals. 
Let  $J \subseteq  I$ be ideals in a commutative Noetherian ring $R$, and $r,s\geq 0$. Since  $I^rJ^s \subseteq I^r\cap J^s \subseteq J^s$ and $I^rJ^s\subseteq I^{r+s}$, we always have  $I^rJ^s \subseteq I^{r+s} \cap J^s$. We are interested in when  $I^rJ^s=I^{r+s}\cap J^s$ holds. 
We say that  $J$ is  a \textit{demotion} of $I$ if
$I^rJ^s=I^{r+s}\cap J^s$ for all $r,s\geq 0$. Furthermore,  $J$  is said to be   a \textit{proper demotion} of $I$ if $J$ is a demotion of $I$ and $I\neq J$. 
This equality refines the trivial containment $I^rJ^s\subseteq I^{r+s}\cap J^s$ and captures a precise compatibility between the two ideals.
In this paper, we show that in the monomial world this notion admits explicit characterizations and constructions: for instance, prime monomial ideals and several natural classes of monomial ideals give rise to demotions, and principal monomial ideals admit a simple description of all their demotions. 

The second aim is to deepen the study of normally torsion-free monomial ideals and to use the demotion framework to produce new normally torsion-free examples. Understanding the structure of normally torsion-free ideals contributes to the broader program of investigating minimal counterexamples to the Conforti–Cornu\'e{j}ols conjecture and sheds light on how combinatorial properties of hypergraphs reflect algebraic stability phenomena, more information can be found in \cite{BNT}.

The paper is organized as follows. In Section~\ref{Section 2} we collect fundamental definitions, results, and facts that
will play a key role in the subsequent sections of this paper.
 Section~\ref{Section 3} is concerned with  investigating  some general properties of demotions of ideals in commutative rings, refer to Propositions 
 \ref{Pro.General.1} and \ref{Pro.General.2}. The main aim of Section \ref{Section 4} is to identify some classes of monomial ideals that have the demotion property, consult  Propositions \ref{Demotion-1}, \ref{Demotion-4}, and \ref{Demotion-2}.  
 In Section \ref{Section 5}, we probe  the behavior of the demotions of monomial ideals under a variety of monomial operations. In particular, we explore how these ideals are affected by operations such as expansion (Proposition \ref{PRO.Expansion}), summation (Proposition \ref{Summation}), 
  weighting (Proposition \ref{PRO.Weighting}), 
 monomial multiple (Proposition \ref{Multiple}), permutation (Proposition \ref{PRO.Permutation}), monomial localization (Proposition \ref{PRO.Localization}), contraction (Proposition \ref{Cor. contraction}), and deletion (Proposition \ref{PRO.Deletion}). Building upon these operations, we propose several systematic methods for constructing new monomial ideals whose demotions are determined by those of previously studied monomial ideals. 
 Section \ref{Section 6} is associated with  exploring  the relationship between demotions of monomial ideals and normally torsion-freeness. 
 particularly, we demonstrate that the demotion property can be used to construct new normally torsion-free monomial ideals, 
   and conversely, that normally torsion-free ideals can inform the generation of ideals with the demotion property, see Theorems \ref{NTT-1} and \ref{NTT-2}. 
 Finally,   Section \ref{Section 7} is devoted to comparing  reductions and demotions of ideals such that  we establish that a number of its characteristic properties are not preserved in the case of demotions of ideals. For this purpose, we present several counterexamples in this section.

 Throughout this text, we let $\mathcal{G}(I)$ denote the unique minimal set of monomial generators of a monomial ideal $I \subset R = K[x_1, \ldots, x_n]$, where $R$ is the polynomial ring over a field $K$. Moreover, for a monomial $u \in R$, its {\em support}, denoted by $\mathrm{supp}(u)$, is defined as the set of variables that divide $u$;  we also set $\mathrm{supp}(1) = \emptyset$. For a monomial ideal $I$, we further define  
$\mathrm{supp}(I) = \bigcup_{u \in \mathcal{G}(I)} \mathrm{supp}(u).$


\section{Preliminaries} \label{Section 2}

This section aims to consolidate the fundamental definitions, key results, and essential facts and theoretical tools that will serve as the foundation for the developments discussed in the subsequent sections of this study. To establish a firm groundwork, we begin by revisiting the following result.


\begin{fact} \label{Exercise 6.4} (\cite[Exercise 6.4]{MAT})
Let $I$  and  $J$ be ideals of a Noetherian ring $A$. Prove that if $JA_\mathfrak{p} \subseteq  IA_\mathfrak{p}$  for
every $\mathfrak{p}\in \mathrm{Ass}_A(A/I)$,  then $J\subseteq I$. 
\end{fact}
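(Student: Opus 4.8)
The plan is to argue by contradiction, rephrasing the desired containment as the vanishing of an $A$-module. Set $M := (J+I)/I$, a finitely generated submodule of $A/I$, and note that $J \subseteq I$ is equivalent to $M = 0$. So suppose, for contradiction, that $J \not\subseteq I$, i.e.\ $M \neq 0$. Since $A$ is Noetherian and $M$ is a nonzero finitely generated $A$-module, $\mathrm{Ass}_A(M) \neq \emptyset$; fix some $\mathfrak{p} \in \mathrm{Ass}_A(M)$.

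The next step is to feed $\mathfrak{p}$ into the hypothesis. Because $M$ is a submodule of $A/I$ and associated primes of a submodule are among those of the ambient module, we get $\mathrm{Ass}_A(M) \subseteq \mathrm{Ass}_A(A/I)$, hence $\mathfrak{p} \in \mathrm{Ass}_A(A/I)$. The hypothesis then gives $JA_\mathfrak{p} \subseteq IA_\mathfrak{p}$.

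Now localize at $\mathfrak{p}$. Since localization is exact and commutes with taking sums and quotients of ideals, $M_\mathfrak{p} = \bigl(JA_\mathfrak{p} + IA_\mathfrak{p}\bigr)/IA_\mathfrak{p}$, which is zero by the previous step. On the other hand, $\mathfrak{p} \in \mathrm{Ass}_A(M)$ implies $\mathfrak{p} \in \mathrm{Supp}_A(M)$, so $M_\mathfrak{p} \neq 0$. This contradiction forces $M = 0$, that is, $J \subseteq I$, as claimed.

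There is no real obstacle here: the argument is a routine application of three standard facts about associated primes over a Noetherian ring — that a nonzero finitely generated module has a nonempty set of associated primes, that $\mathrm{Ass}$ of a submodule is contained in $\mathrm{Ass}$ of the module, and that an associated prime lies in the support (so localization there is nonzero). The only thing to be careful about is correctly identifying $((J+I)/I)_\mathfrak{p}$ with $(JA_\mathfrak{p}+IA_\mathfrak{p})/IA_\mathfrak{p}$, which is immediate from exactness of localization.
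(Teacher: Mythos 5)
Your argument is correct. The paper simply cites this as Exercise~6.4 in Matsumura without supplying a proof, so there is nothing in the text to compare against; your proof is the standard one. Setting $M=(J+I)/I$, using that a nonzero finitely generated module over a Noetherian ring has an associated prime, that $\mathrm{Ass}_A(M)\subseteq\mathrm{Ass}_A(A/I)$ since $M$ is a submodule of $A/I$, and that an associated prime lies in the support so $M_\mathfrak{p}\neq 0$, all hold, and the identification $M_\mathfrak{p}\cong (JA_\mathfrak{p}+IA_\mathfrak{p})/IA_\mathfrak{p}$ follows from exactness of localization exactly as you say. No gaps.
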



\begin{fact} \label{proposition 4.3.29} (\cite[Proposition 4.3.29]{V1})
 Let $I$ be an ideal of a ring $R$. If $I$ has no embedded
primes, then $I$ is normally torsion-free if and only if $I^n = I^{(n)}$ for all $n \geq 1$. 
\end{fact}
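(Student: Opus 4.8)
The plan is to translate everything into primary decomposition and then invoke Fact~\ref{Exercise 6.4}. The starting point is the standard description of the symbolic power: since $I$ has no embedded primes, $\mathrm{Ass}_R(R/I)=\mathrm{Min}(I)$, and one may write $I^{(n)}=\bigcap_{\mathfrak{p}\in\mathrm{Min}(I)}(I^nR_\mathfrak{p}\cap R)$. Because $\sqrt{I^n}=\sqrt{I}$ we have $\mathrm{Min}(I^n)=\mathrm{Min}(I)$, so for each $\mathfrak{p}\in\mathrm{Min}(I)$ no associated prime of $I^n$ lies strictly below $\mathfrak{p}$, and hence $I^nR_\mathfrak{p}\cap R$ is exactly the (unique, isolated) $\mathfrak{p}$-primary component of $I^n$. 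The inclusion $I^n\subseteq I^{(n)}$ is automatic, so in each direction only the reverse inclusion — equivalently, the location of the embedded primes of $I^n$ — is at stake.

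For the forward implication, suppose $I$ is normally torsion-free and fix $n\geq1$. Localizing the formula for $I^{(n)}$ at a prime $\mathfrak{p}\in\mathrm{Min}(I)$ and using that localization commutes with finite intersections, every term $(I^nR_\mathfrak{q}\cap R)$ with $\mathfrak{q}\in\mathrm{Min}(I)\setminus\{\mathfrak{p}\}$ becomes the unit ideal of $R_\mathfrak{p}$ (such a $\mathfrak{q}$ is not contained in $\mathfrak{p}$, and a $\mathfrak{q}$-primary ideal meets $R\setminus\mathfrak{p}$), while the $\mathfrak{q}=\mathfrak{p}$ term gives back $I^nR_\mathfrak{p}$; hence $I^{(n)}R_\mathfrak{p}=I^nR_\mathfrak{p}$. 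By normal torsion-freeness together with the hypothesis on $I$, $\mathrm{Ass}_R(R/I^n)\subseteq\mathrm{Ass}_R(R/I)=\mathrm{Min}(I)$, so the inclusion $I^{(n)}R_\mathfrak{p}\subseteq I^nR_\mathfrak{p}$ holds for every $\mathfrak{p}\in\mathrm{Ass}_R(R/I^n)$. Fact~\ref{Exercise 6.4}, applied with $I^n$ and $I^{(n)}$ in place of $I$ and $J$, then yields $I^{(n)}\subseteq I^n$, so $I^n=I^{(n)}$.

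For the converse, suppose $I^n=I^{(n)}$ for every $n\geq1$. The formula $I^{(n)}=\bigcap_{\mathfrak{p}\in\mathrm{Min}(I)}(I^nR_\mathfrak{p}\cap R)$ exhibits a primary decomposition of $I^{(n)}$ all of whose associated primes lie in $\mathrm{Min}(I)$, so $\mathrm{Ass}_R(R/I^{(n)})\subseteq\mathrm{Min}(I)=\mathrm{Ass}_R(R/I)$, the final equality being the no-embedded-primes hypothesis. Since $I^n=I^{(n)}$, we conclude $\mathrm{Ass}_R(R/I^n)\subseteq\mathrm{Ass}_R(R/I)$ for all $n$, i.e.\ $I$ is normally torsion-free.

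I do not anticipate a serious obstacle: the argument is bookkeeping with primary decomposition and localization. The one point requiring care is the identification, for $\mathfrak{p}$ minimal over $I$, of the contraction $I^nR_\mathfrak{p}\cap R$ with the isolated $\mathfrak{p}$-primary component of $I^n$ — this is precisely where the absence of embedded primes of $I$ enters (it forces $\mathrm{Min}(I^n)=\mathrm{Min}(I)$, so that no embedded prime of $I^n$ sits below such a $\mathfrak{p}$), and it also explains why forming the defining intersection over $\mathrm{Min}(I)$ rather than over all of $\mathrm{Ass}_R(R/I)$ makes no difference here.
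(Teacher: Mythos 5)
The paper records this as a cited Fact from \cite[Proposition 4.3.29]{V1} and gives no proof of its own, so there is nothing to compare against directly; what can be assessed is whether your argument stands on its own, and it does. Both directions are correct: the identification of $I^{n}R_{\mathfrak p}\cap R$, for $\mathfrak p\in\mathrm{Min}(I)$, with the isolated $\mathfrak p$-primary component of $I^{n}$ is legitimate because $\sqrt{I^{n}}=\sqrt I$ forces $\mathrm{Min}(I^{n})=\mathrm{Min}(I)$, and the use of Fact~\ref{Exercise 6.4} to pass from local equalities at the associated primes of $I^{n}$ to the global inclusion $I^{(n)}\subseteq I^{n}$ is exactly the right tool.

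One stylistic remark: the forward direction can be shortened so that the localization computation and Fact~\ref{Exercise 6.4} are not needed at all. Normal torsion-freeness together with the no-embedded-primes hypothesis gives $\mathrm{Ass}_R(R/I^{n})\subseteq\mathrm{Min}(I)$, while $\mathrm{Min}(I)=\mathrm{Min}(I^{n})\subseteq\mathrm{Ass}_R(R/I^{n})$ always holds; hence $\mathrm{Ass}_R(R/I^{n})=\mathrm{Min}(I^{n})$, i.e.\ $I^{n}$ has no embedded primes, and its (unique, minimal) primary decomposition is exactly the intersection defining $I^{(n)}$. That said, your route via Fact~\ref{Exercise 6.4} is sound and fits the toolkit the paper has already set up, so there is no objection to it. You also implicitly work in the Noetherian setting (needed for $\mathrm{Min}(I)$ to be finite and for Fact~\ref{Exercise 6.4}); that is the intended context in \cite{V1} and worth stating explicitly.
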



\begin{fact} \label{exercise 6.1.25} (\cite[Exercise 6.1.25]{V1})
 If $\mathfrak{q}_1, \ldots, \mathfrak{q}_r$  are primary monomial ideals of $R$ with non-comparable
radicals and $I$ is an ideal such that $I=\mathfrak{q}_1 \cap \cdots \cap  \mathfrak{q}_r$, then
$I^{(n)}=\mathfrak{q}^n_1 \cap \cdots \cap  \mathfrak{q}^n_r$. 
\end{fact}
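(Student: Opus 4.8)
The plan is to evaluate the symbolic power through the localization formula $I^{(n)}=\bigcap_{\mathfrak{p}\in\mathrm{Min}(I)}\bigl(I^{n}R_{\mathfrak{p}}\cap R\bigr)$ and to check that localizing at the prime $\mathfrak{p}_i:=\sqrt{\mathfrak{q}_i}$ isolates exactly the component $\mathfrak{q}_i^{\,n}$, so that intersecting over $i$ recovers $\mathfrak{q}_1^{\,n}\cap\cdots\cap\mathfrak{q}_r^{\,n}$.

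First I would identify $\mathrm{Min}(I)$. Since $\sqrt{I}=\mathfrak{p}_1\cap\cdots\cap\mathfrak{p}_r$, each minimal prime of $I$ is a minimal member of $\{\mathfrak{p}_1,\dots,\mathfrak{p}_r\}$, and the non-comparability hypothesis forces all $r$ of them to be minimal; thus $\mathrm{Min}(I)=\{\mathfrak{p}_1,\dots,\mathfrak{p}_r\}$. (The same hypothesis makes the given primary decomposition irredundant, so these primes also equal $\mathrm{Ass}(R/I)$, which keeps the argument valid under either of the customary definitions of $I^{(n)}$.) Next, fixing $i$, I would show $I^{n}R_{\mathfrak{p}_i}=\mathfrak{q}_i^{\,n}R_{\mathfrak{p}_i}$. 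For $j\neq i$ the non-comparability gives $\mathfrak{p}_j\not\subseteq\mathfrak{p}_i$, so some variable $x$ lies in $\mathfrak{p}_j\setminus\mathfrak{p}_i$; since $\mathfrak{q}_j$ is $\mathfrak{p}_j$-primary we have $x^{N}\in\mathfrak{q}_j$ for some $N$, and $x$ is a unit in $R_{\mathfrak{p}_i}$, whence $\mathfrak{q}_jR_{\mathfrak{p}_i}=R_{\mathfrak{p}_i}$. Because localization commutes with finite intersections and with products of ideals,
\[
I^{n}R_{\mathfrak{p}_i}=\Bigl(\bigcap_{j=1}^{r}\mathfrak{q}_j\Bigr)^{n}R_{\mathfrak{p}_i}=\Bigl(\bigcap_{j=1}^{r}\mathfrak{q}_jR_{\mathfrak{p}_i}\Bigr)^{n}=\bigl(\mathfrak{q}_iR_{\mathfrak{p}_i}\bigr)^{n}=\mathfrak{q}_i^{\,n}R_{\mathfrak{p}_i},
\]
so $I^{n}R_{\mathfrak{p}_i}\cap R=\mathfrak{q}_i^{\,n}R_{\mathfrak{p}_i}\cap R$.

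The decisive step --- and the one where the monomial hypothesis really enters --- is the identity $\mathfrak{q}_i^{\,n}R_{\mathfrak{p}_i}\cap R=\mathfrak{q}_i^{\,n}$; for an arbitrary primary ideal a power may fail to be contracted-closed (its isolated primary component could be strictly larger than the power itself), so one genuinely needs the combinatorial structure. I would first note that every minimal monomial generator of the $\mathfrak{p}_i$-primary monomial ideal $\mathfrak{q}_i$ is supported on $\mathrm{supp}(\mathfrak{p}_i)$: if a minimal generator $u$ were divisible by a variable $x\notin\mathfrak{p}_i$, writing $u=x^{a}v$ with $x\nmid v$ would give $x^{a}v\in\mathfrak{q}_i$ with $x^{a}\notin\sqrt{\mathfrak{q}_i}$, so primariness yields $v\in\mathfrak{q}_i$, contradicting minimality. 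Hence $\mathrm{supp}(\mathfrak{q}_i^{\,n})\subseteq\mathrm{supp}(\mathfrak{p}_i)$, and for a monomial ideal $Q$ with this property the monomial localization satisfies $QR_{\mathfrak{p}_i}\cap R=\bigl(Q:(\prod_{x\notin\mathfrak{p}_i}x)^{\infty}\bigr)=Q$, since no generator of $Q$ is divisible by any variable outside $\mathfrak{p}_i$. Assembling the three steps yields
\[
I^{(n)}=\bigcap_{i=1}^{r}\bigl(I^{n}R_{\mathfrak{p}_i}\cap R\bigr)=\bigcap_{i=1}^{r}\mathfrak{q}_i^{\,n},
\]
as desired. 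The only subtle point I foresee is making sure that this last contraction returns $\mathfrak{q}_i^{\,n}$ on the nose rather than merely its $\mathfrak{p}_i$-primary component, and the support argument above is exactly what rules out any extra embedded contribution.
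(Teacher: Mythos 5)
Your argument is correct, and since the paper records this result only as a cited \emph{Fact} (Exercise~6.1.25 from Villarreal's \emph{Monomial Algebras}) without supplying a proof, there is no in-paper proof to compare against; your localization-and-contraction route is in any case the standard one for such statements.

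A few remarks on the details, all of which check out. The identification of $\mathrm{Min}(I)$ is right: $\sqrt I=\mathfrak p_1\cap\cdots\cap\mathfrak p_r$, and pairwise non-comparability makes every $\mathfrak p_i$ minimal, so $\mathrm{Min}(I)=\mathrm{Ass}(R/I)=\{\mathfrak p_1,\dots,\mathfrak p_r\}$ and the two common conventions for $I^{(n)}$ agree. The computation $I^nR_{\mathfrak p_i}=\mathfrak q_i^{\,n}R_{\mathfrak p_i}$ is clean once one observes that for $j\neq i$ some variable generator $x$ of $\mathfrak p_j$ is a unit in $R_{\mathfrak p_i}$ and a power of it lies in $\mathfrak q_j$, killing the extraneous components under localization. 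For the contraction step, your support observation that every minimal generator of a $\mathfrak p_i$-primary monomial ideal lives in the variables of $\mathfrak p_i$ is exactly the characterization of primary monomial ideals, and it gives a bit more than you use: it shows $\mathfrak q_i^{\,n}$ is again $\mathfrak p_i$-primary (it is generated in those variables and contains a power of each), from which $\mathfrak q_i^{\,n}R_{\mathfrak p_i}\cap R=\mathfrak q_i^{\,n}$ follows by the general fact that $\mathfrak p$-primary ideals are contracted from $R_{\mathfrak p}$. Phrasing it that way would let you avoid the passing appeal to the identity $QR_{\mathfrak p_i}\cap R=\bigl(Q:(\prod_{x\notin\mathfrak p_i}x)^{\infty}\bigr)$, which is true for monomial ideals but is itself a small lemma; either route is fine. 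The assembled conclusion $I^{(n)}=\bigcap_i\mathfrak q_i^{\,n}$ then follows as you state.
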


 
\begin{fact} \label{fact1} (\cite[Exercise 6.1.23]{V1}) 
If $I$, $J$, $L$ are monomial ideals, then  the following equalities hold:
\begin{itemize}
\item[(i)]  $I\cap (J+L)=(I \cap J) + (I \cap L).$
\item[(ii)] $I+ (J \cap L)= (I+J) \cap (I+L).$
\end{itemize}
\end{fact}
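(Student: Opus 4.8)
The plan is to reduce both identities to the corresponding Boolean distributive laws by working one monomial at a time. The underlying principle is that a monomial ideal is the $K$-span of the monomials it contains, so a polynomial lies in it precisely when each of its monomial terms does; in particular two monomial ideals are equal if and only if they contain exactly the same set of monomials. Since a sum of monomial ideals and an intersection of monomial ideals are again monomial ideals, it therefore suffices, for each of (i) and (ii), to fix an arbitrary monomial $m \in R$ and check that $m$ belongs to the left-hand side if and only if it belongs to the right-hand side.

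The crucial auxiliary step is the translation of the two ideal operations into logic for a monomial $m$: one has $m \in A \cap B$ if and only if $m \in A$ and $m \in B$ (which is immediate, for any ideals), and $m \in A + B$ if and only if $m \in A$ or $m \in B$. Only the forward direction of the second statement uses that the ideals are monomial: $A+B$ is a monomial ideal with $\mathcal{G}(A+B) \subseteq \mathcal{G}(A) \cup \mathcal{G}(B)$, so if $m \in A+B$ then $m$ is divisible by some element of $\mathcal{G}(A)$ or of $\mathcal{G}(B)$, whence $m \in A$ or $m \in B$. (Equivalently: write $m = a+b$ with $a\in A$, $b\in B$, expand $a$ and $b$ into their monomial terms, and observe that the coefficient $1$ of $m$ on the right forces $m$ to occur as a term of $a$ or of $b$, so $m \in A$ or $m \in B$ since both ideals are monomial.)

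Granting this, the proof becomes pure bookkeeping. For (i): $m \in I \cap (J+L)$ holds iff $m\in I$ and ($m\in J$ or $m\in L$), which by distributivity is equivalent to ($m\in I$ and $m\in J$) or ($m\in I$ and $m\in L$), i.e.\ $m\in I\cap J$ or $m\in I\cap L$, i.e.\ $m\in (I\cap J)+(I\cap L)$. For (ii): $m\in I+(J\cap L)$ holds iff $m\in I$ or ($m\in J$ and $m\in L$), which by the dual distributive law is equivalent to ($m\in I$ or $m\in J$) and ($m\in I$ or $m\in L$), i.e.\ $m\in I+J$ and $m\in I+L$, i.e.\ $m\in (I+J)\cap(I+L)$.

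I expect the only real content to be the auxiliary step on sums, since for arbitrary ideals the lattice operations need not distribute — for instance $(x)\cap\big((y)+(x+y)\big)\ne\big((x)\cap(y)\big)+\big((x)\cap(x+y)\big)$ in $K[x,y]$ — and it is precisely the monomial $K$-basis that rescues the two non-trivial inclusions $I\cap(J+L)\subseteq(I\cap J)+(I\cap L)$ and $(I+J)\cap(I+L)\subseteq I+(J\cap L)$. The reverse inclusions, by contrast, hold over any commutative ring, since $I\cap J$ and $I\cap L$ both sit inside $I\cap(J+L)$, and $I+(J\cap L)$ sits inside both $I+J$ and $I+L$; so in fact only these two forward inclusions genuinely need to be argued.
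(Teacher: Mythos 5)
Your proof is correct. The paper cites this statement as Exercise~6.1.23 of Villarreal's book and supplies no proof of its own, so there is nothing in the text to compare against; for the record, your argument is the standard one. The essential reduction is exactly the right one: since sums and intersections of monomial ideals are monomial ideals, and a polynomial lies in a monomial ideal if and only if each of its monomial terms does, it suffices to track membership of a single monomial~$m$. The only step that uses the monomial hypothesis is $m\in A+B \Leftrightarrow (m\in A \text{ or } m\in B)$, which you justify correctly via $\mathcal{G}(A+B)\subseteq\mathcal{G}(A)\cup\mathcal{G}(B)$; after that the two identities are the Boolean distributive laws. Your closing remarks are also accurate: the reverse inclusions $(I\cap J)+(I\cap L)\subseteq I\cap(J+L)$ and $I+(J\cap L)\subseteq(I+J)\cap(I+L)$ hold for arbitrary ideals in any commutative ring, and your counterexample in $K[x,y]$ (where $(x)\cap\bigl((y)+(x+y)\bigr)=(x)$ while $\bigl((x)\cap(y)\bigr)+\bigl((x)\cap(x+y)\bigr)=(xy,x^2)$) correctly shows the forward inclusion of~(i) genuinely requires the monomial hypothesis.
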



\begin{theorem} (\cite[Theorem 14.3.6]{V1})  \label{Villarreal1}
 Let $\mathcal{C}$  be a clutter and let  $A$ be its incidence matrix. The following are equivalent:
 \begin{itemize}
 \item[(i)] $\mathrm{gr}_I(R)$ is reduced, where $I=I(\mathcal{C})$ is the edge ideal of $\mathcal{C}$. 
 \item[(ii)] $R[It]$ is normal  and $\mathcal{Q}(A)$ is an integral polyhedron. 
 \item[(iii)]  $x\geq 0$; $xA\geq \textbf{1}$   is a $\mathrm{TDI}$ system. 
 \item[(iv)]  $\mathcal{C}$  has the max-flow min-cut (MFMC) property.
 \item[(v)]  $I^i = I^{(i)}$ for $i \geq 1$. 
 \item[(vi)]  $I$ is normally torsion-free, i.e.,  $\mathrm{Ass}_R(R/I^i) \subseteq  \mathrm{Ass}_R(R/I)$ for $i\geq 1$. 
 \item[(vii)]  $\mathcal{C}$  is Mengerian, i.e., $\beta_1({\mathcal{C}}^a) = \alpha_0({\mathcal{C}}^a)$ for all  $a \in \mathbb{N}^n$.
 \end{itemize}
\end{theorem}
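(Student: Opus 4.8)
The plan is to prove the theorem as a synthesis, organized as a cycle of implications in which the purely combinatorial equivalences among (iii), (iv), (vii) are separated from the commutative-algebra equivalences among (i), (ii), (v), (vi), with the two halves bridged through the symbolic powers $I^{(i)}$. I begin with the cheapest link, (v) $\Leftrightarrow$ (vi). Since $I=I(\mathcal{C})$ is a squarefree monomial ideal it is radical, its irredundant primary decomposition is $I=\mathfrak{p}_1\cap\cdots\cap\mathfrak{p}_r$ where the $\mathfrak{p}_j$ are the pairwise incomparable monomial primes generated by the variables of the minimal vertex covers of $\mathcal{C}$, and hence $I$ has no embedded associated primes. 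Fact~\ref{proposition 4.3.29} then applies verbatim and gives (v) $\Leftrightarrow$ (vi); this is the one point at which the absence of embedded primes is genuinely used, and here it is automatic.

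Next I would dispatch the combinatorial block (iii) $\Leftrightarrow$ (iv) $\Leftrightarrow$ (vii) by unwinding definitions. Writing out the MFMC property of (iv) as the requirement that for every $w\in\mathbb{N}^{n}$ the dual packing program $\max\{\langle\mathbf{1},y\rangle:\ y\ge 0,\ Ay\le w\}$ has an integral optimal solution whose value equals the covering minimum $\min\{\langle w,x\rangle:\ x\ge 0,\ xA\ge\mathbf{1}\}$, one sees that this is, word for word, total dual integrality of the system $x\ge 0,\ xA\ge\mathbf{1}$, which is (iii); and the Mengerian condition $\beta_1(\mathcal{C}^{a})=\alpha_0(\mathcal{C}^{a})$ for all $a\in\mathbb{N}^{n}$ of (vii) is the combinatorial restatement of the same $\min=\max$ on the blow-up clutter $\mathcal{C}^{a}$. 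The one non-formal ingredient I would cite here is the Edmonds--Giles theorem: a totally dual integral system with integral right-hand side defines an integral polyhedron, so (iii) already forces $\mathcal{Q}(A)=\{x\ge 0:\ xA\ge\mathbf{1}\}$ to be integral, which is half of (ii).

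The bridge is the claim that the MFMC property holds if and only if $I^{i}=I^{(i)}$ for all $i\ge 1$. By Fact~\ref{exercise 6.1.25}, applied to the incomparable primary components, $I^{(i)}=\mathfrak{p}_1^{i}\cap\cdots\cap\mathfrak{p}_r^{i}$, and since each $\mathfrak{p}_j^{i}$ is a power of a prime monomial ideal it already coincides with its integral closure; a Newton-polyhedron computation (with Fact~\ref{fact1} available to manipulate the relevant intersections of monomial ideals) then identifies $\mathfrak{p}_1^{i}\cap\cdots\cap\mathfrak{p}_r^{i}$ with the monomial ideal whose exponents are the lattice points of the dilated covering polyhedron $\{x\ge 0:\ xA\ge i\mathbf{1}\}$, and simultaneously yields $\overline{I^{i}}=I^{(i)}$ for every $i$. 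On the other hand $I^{i}$ is generated by the sums of $i$ edge-exponent vectors of $\mathcal{C}$, and the trivial inclusion $I^{i}\subseteq I^{(i)}$ noted in the introduction is always available, so the equality $I^{i}=I^{(i)}$ says precisely that every integral point of $\{x\ge 0:\ xA\ge i\mathbf{1}\}$ dominates an integral sum of $i$ such vectors; imposing this for all $i$ is equivalent to integrality of $\mathcal{Q}(A)$ together with an integral decomposition of its vertices, i.e.\ to the MFMC property. Feeding in $\overline{I^{i}}=I^{(i)}$, the same equality reads $I^{i}=\overline{I^{i}}$ for all $i$, i.e.\ $R[It]$ is normal; combined with the integrality of $\mathcal{Q}(A)$ from the previous paragraph this gives (v) $\Leftrightarrow$ (ii).

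Finally I would close the loop with (i). Here I would invoke the standard criterion that, for a monomial ideal, reducedness of $\mathrm{gr}_{I}(R)$ is equivalent to normality of the extended Rees algebra $R[It,t^{-1}]$, which in this graded situation is equivalent to normality of $R[It]$; translating normality of $R[It]$ polyhedrally and using the Newton-polyhedron description of $\overline{I^{i}}$ returns $I^{i}=\overline{I^{i}}=I^{(i)}$ for all $i$, so (i) $\Leftrightarrow$ (ii) $\Leftrightarrow$ (v), completing the cycle. The routine parts of this program are the definition-chasing among (iii), (iv), (vii) and the use of Fact~\ref{proposition 4.3.29}; the genuinely hard inputs, which I would cite rather than reprove, are the theorem (in the style of Huneke) relating reducedness of $\mathrm{gr}_{I}(R)$ to normality of the Rees algebra, and the Edmonds--Giles integrality theorem together with the polyhedral dictionary. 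The subtlest point, and the expected main obstacle, is making the quantifier ``for all $i\ge 1$'' in (v) line up with integrality of the single polyhedron $\mathcal{Q}(A)$ in (ii) and (iii) --- precisely the place where a clean normality criterion for $R[It]$, rather than a case-by-case analysis, earns its keep.
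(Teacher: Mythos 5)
The statement is not proved in the paper: it is quoted verbatim from Villarreal's monograph (Theorem~14.3.6 of \cite{V1}) and used as a black box, so there is no in-paper argument to compare against. Your sketch, however, contains a genuine gap at the bridge between the polyhedral and ring-theoretic sides. You first assert that $I^{(i)}=\mathfrak{p}_1^i\cap\cdots\cap\mathfrak{p}_r^i$ is ``the monomial ideal whose exponents are the lattice points of the dilated covering polyhedron $\{x\ge 0:\ xA\ge i\mathbf{1}\}$,'' and then that ``a Newton-polyhedron computation \dots{} simultaneously yields $\overline{I^{i}}=I^{(i)}$ for every $i$.'' The first claim conflates two different polyhedra: $\mathcal{Q}(A)=\{x\ge0:xA\ge\mathbf{1}\}$ is cut out by the \emph{edges} of $\mathcal{C}$, whereas $\mathfrak{p}_1^i\cap\cdots\cap\mathfrak{p}_r^i$ is cut out by the \emph{minimal vertex covers}, i.e.\ by the incidence matrix of the blocker of $\mathcal{C}$, not by $A$ itself. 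The second claim is simply false in general: take the triangle $I=(x_1x_2,x_2x_3,x_1x_3)\subset K[x_1,x_2,x_3]$. Then $x_1x_2x_3\in I^{(2)}$, since every minimal vertex cover has two elements and $(1,1,1)$ sums to $2$ on each, but $(1,1,1)\notin 2\cdot\mathrm{NP}(I)=\operatorname{conv}\{(2,2,0),(0,2,2),(2,0,2)\}+\mathbb{R}^3_{\ge0}$ (the only convex combination dominated by $(1,1,1)$ would require a nonnegative remainder, and the barycentre $(4/3,4/3,4/3)$ already exceeds it). Hence $\overline{I^2}\subsetneq I^{(2)}$, and the equality you feed into (v) to obtain normality of $R[It]$ does not hold unconditionally.

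The correct bridge is precisely the pair of equivalences that you collapse into one: $R[It]$ is normal $\iff$ $I^i=\overline{I^i}$ for all $i$, and, separately, $\mathcal{Q}(A)$ is integral $\iff$ $\overline{I^i}=I^{(i)}$ for all $i$ (this is where Fulkerson/Lehman blocking duality enters: the Newton polyhedron of $I$ coincides with the symbolic polyhedron exactly when $\mathcal{Q}(A)$ is integral, and only then are the two polyhedra blockers of each other). Treating $\overline{I^i}=I^{(i)}$ as a free identity skips the core polyhedral content of the theorem and therefore proves (v)$\Leftrightarrow$(ii) circularly. The remaining pieces of your programme --- (v)$\Leftrightarrow$(vi) via Fact~\ref{proposition 4.3.29}, the definition-chasing among (iii), (iv), (vii), the appeal to Edmonds--Giles, and the reducedness of $\mathrm{gr}_I(R)$ $\leftrightarrow$ normality of the Rees algebra dictionary --- are sound in outline; the one thing you cannot get for free is the identification of $\overline{I^i}$ with $I^{(i)}$.
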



 \begin{proposition}\label{Intersection}
  Let $I$ and $J$ be two square-free monomial ideals in a polynomial ring $R=K[x_1, \ldots, x_n]$ over a field $K$. Then, for all $k\geq 1$, we have 
  $(I\cap J)^{(k)}=I^{(k)} \cap J^{(k)}$. 
  \end{proposition}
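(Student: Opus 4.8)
The plan is to use the structure theory of square-free monomial ideals via their primary decompositions together with Fact~\ref{exercise 6.1.25}. A square-free monomial ideal $L$ has a unique irredundant primary decomposition $L = \bigcap_{i} \mathfrak{p}_i$, where each $\mathfrak{p}_i$ is a monomial prime, i.e., generated by a subset of the variables; moreover the radicals (which here are the $\mathfrak{p}_i$ themselves) are pairwise non-comparable. Applying Fact~\ref{exercise 6.1.25} with the primary ideals taken to be the minimal primes, we get $L^{(k)} = \bigcap_i \mathfrak{p}_i^{\,k}$ for all $k \geq 1$. So writing $I = \bigcap_{i} \mathfrak{p}_i$ and $J = \bigcap_{j} \mathfrak{q}_j$ for the minimal-prime decompositions, we have $I^{(k)} = \bigcap_i \mathfrak{p}_i^{\,k}$ and $J^{(k)} = \bigcap_j \mathfrak{q}_j^{\,k}$.

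Next I would analyze $I \cap J$. As a monomial ideal, $I \cap J = \bigcap_{i,j} (\mathfrak{p}_i \cap \mathfrak{q}_j) = \bigcap_{i,j} (\mathfrak{p}_i + \mathfrak{q}_j)$, where the last equality holds because for monomial primes $\mathfrak{p}, \mathfrak{q}$ on variable sets $P, Q$ one has $\mathfrak{p} \cap \mathfrak{q} = \mathfrak{p}\mathfrak{q}$ when $P \cap Q = \emptyset$; in general $\mathfrak{p} \cap \mathfrak{q} = (x_i : x_i \in P \cup Q)$? That is not quite right either, so more carefully: $\mathfrak{p}_i \cap \mathfrak{q}_j$ is the monomial ideal generated by $\{x \cdot y : x \in \mathcal{G}(\mathfrak{p}_i), y \in \mathcal{G}(\mathfrak{q}_j)\}$ together with the common variables. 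In any case, $I \cap J$ is again a square-free monomial ideal, so by Fact~\ref{exercise 6.1.25} again, $(I \cap J)^{(k)} = \bigcap_{P} P^{\,k}$ where $P$ ranges over the minimal primes of $I \cap J$. The key combinatorial fact is that the minimal primes of $I \cap J$ are exactly the minimal elements (under inclusion) of $\{\mathfrak{p}_i + \mathfrak{q}_j\}_{i,j}$, equivalently of $\{\mathfrak{p}_i\} \cup \{\mathfrak{q}_j\}$ after removing non-minimal ones, since $V(I \cap J) = V(I) \cup V(J)$.

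Assembling these: $(I\cap J)^{(k)} = \bigcap_{P \text{ min.\ prime of } I\cap J} P^k$. Since every minimal prime of $I$ and every minimal prime of $J$ contains some minimal prime of $I \cap J$, and conversely, I would argue that $\bigcap_{P} P^k = \big(\bigcap_i \mathfrak{p}_i^k\big) \cap \big(\bigcap_j \mathfrak{q}_j^k\big) = I^{(k)} \cap J^{(k)}$. The containment $\subseteq$ needs: if $\mathfrak{p}_i \supseteq P$ for a minimal prime $P$ of $I\cap J$, then $\mathfrak{p}_i^k \supseteq P^k$, hence $\bigcap_P P^k \subseteq \mathfrak{p}_i^k$; intersecting over all $i,j$ gives $\subseteq$. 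The reverse containment $\supseteq$ is immediate since each $P$ equals some $\mathfrak{p}_i$ or $\mathfrak{q}_j$ (the minimal ones), so $I^{(k)} \cap J^{(k)} \subseteq \bigcap_i \mathfrak{p}_i^k \cap \bigcap_j \mathfrak{q}_j^k \subseteq \bigcap_P P^k$.

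The main obstacle I anticipate is the bookkeeping around primary decompositions of $I \cap J$: one must be careful that passing to an irredundant decomposition (discarding redundant $\mathfrak{p}_i + \mathfrak{q}_j$) does not change the symbolic power, and that the set of minimal primes of $I \cap J$ is genuinely $\{\text{minimal elements of } \{\mathfrak{p}_i\} \cup \{\mathfrak{q}_j\}\}$ rather than something involving the sums $\mathfrak{p}_i + \mathfrak{q}_j$. This is because $\sqrt{I \cap J} = \sqrt{I} \cap \sqrt{J} = I \cap J$ (both already radical), and the minimal primes of an intersection are the minimal members among the union of the minimal primes of the factors. Once that point is pinned down, the rest is a clean application of Fact~\ref{exercise 6.1.25} twice and elementary manipulation of intersections of powers of monomial primes.
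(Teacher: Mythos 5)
The paper states Proposition~\ref{Intersection} in the Preliminaries section without giving a proof, so there is no argument in the paper to compare yours against; I will instead assess the proposal on its own terms. Your core argument is correct: writing $I=\bigcap_i\mathfrak{p}_i$, $J=\bigcap_j\mathfrak{q}_j$ as intersections of their (pairwise non-comparable) minimal primes, Fact~\ref{exercise 6.1.25} gives $I^{(k)}=\bigcap_i\mathfrak{p}_i^k$, $J^{(k)}=\bigcap_j\mathfrak{q}_j^k$, and $(I\cap J)^{(k)}=\bigcap_{P\in\mathrm{Min}(I\cap J)}P^k$; since $\mathrm{Min}(I\cap J)$ is exactly the set of minimal elements of $\mathrm{Min}(I)\cup\mathrm{Min}(J)$, the two intersections agree (removing a non-minimal $\mathfrak{p}_i$ is harmless because some $\mathfrak{q}_j\subsetneq\mathfrak{p}_i$ forces $\mathfrak{q}_j^k\subseteq\mathfrak{p}_i^k$, and conversely every $P$ is already one of the $\mathfrak{p}_i$ or $\mathfrak{q}_j$). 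The one genuine misstep is the passing claim that $I\cap J=\bigcap_{i,j}(\mathfrak{p}_i+\mathfrak{q}_j)$ and that $\mathrm{Min}(I\cap J)$ consists of the minimal elements of $\{\mathfrak{p}_i+\mathfrak{q}_j\}$: both are false in general (e.g., $I=(x_1)$, $J=(x_2)$ gives $I\cap J=(x_1x_2)$ with minimal primes $(x_1),(x_2)$, not $(x_1,x_2)=\mathfrak{p}_1+\mathfrak{q}_1$). You flag the issue yourself and immediately replace it by the correct description via $V(I\cap J)=V(I)\cup V(J)$, which is what the proof actually rests on; simply delete the $\mathfrak{p}_i+\mathfrak{q}_j$ detour and the write-up is clean.
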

  
  
  \begin{theorem} (\cite[Theorem 2.5]{SN})  \label{NTF.Th.2.5}
Let $I$ be  a  monomial ideal  in  $R=K[x_1, \ldots, x_n]$ such that 
$I=I_1R + I_2R$, where
 $\mathcal{G}(I_1) \subset R_1=K[x_1, \ldots, x_m]$ and $\mathcal{G}(I_2) \subset R_2=K[x_{m+1}, \ldots, x_n]$ for some  positive integer $m$. If $I_1$ and   $I_2$  are normally torsion-free, then  $I$  is so.
 \end{theorem}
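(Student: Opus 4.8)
The plan is to use Fact~\ref{proposition 4.3.29} together with Proposition~\ref{Intersection} to reduce the claim about associated primes to a statement about symbolic powers, and then to exploit the disjointness of the variable sets. First I would record the standing assumptions: $I_1, I_2$ are normally torsion-free, with $\mathcal{G}(I_1)\subset R_1=K[x_1,\dots,x_m]$ and $\mathcal{G}(I_2)\subset R_2=K[x_{m+1},\dots,x_n]$, and $I=I_1R+I_2R$. Since a normally torsion-free ideal has no embedded primes (its only associated primes are the minimal ones, as $\mathrm{Ass}(R/I^s)\subseteq\mathrm{Ass}(R/I)$ forces $\mathrm{Ass}(R/I)=\mathrm{Min}(R/I)$ by taking $s=1$ and comparing with $s$ large via Brodmann), both $I_1$ and $I_2$ have no embedded primes, hence by Fact~\ref{proposition 4.3.29} we have $I_1^k=I_1^{(k)}$ and $I_2^k=I_2^{(k)}$ for all $k\ge1$. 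I would then argue that $I$ itself has no embedded primes: the minimal primes of $I$ are exactly the ideals of the form $\mathfrak{p}_1 R+\mathfrak{p}_2 R$ where $\mathfrak{p}_i$ is a minimal prime of $I_i$ in $R_i$, and a standard tensor-product / flatness argument ($R=R_1\otimes_K R_2$) shows $R/I\cong (R_1/I_1)\otimes_K(R_2/I_2)$ has no embedded associated primes because neither tensor factor does. Granting this, by Fact~\ref{proposition 4.3.29} it suffices to prove $I^k=I^{(k)}$ for all $k\ge1$.

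For the symbolic-power computation I would first handle the ordinary powers combinatorially: since the generators of $I_1$ and $I_2$ live in disjoint variable sets,
\[
I^k=(I_1R+I_2R)^k=\sum_{i+j=k} I_1^i\, I_2^j\, R,
\]
which is the monomial ideal generated by all products $u_1 u_2$ with $u_1\in I_1^i$, $u_2\in I_2^j$, $i+j=k$. Next I would compute the symbolic power. Using a primary decomposition $I_\ell=\bigcap_t \mathfrak{q}_{\ell,t}$ into monomial primary ideals with non-comparable radicals inside $R_\ell$, one gets (extending to $R$) a primary decomposition $I=\bigcap_{t,t'}(\mathfrak{q}_{1,t}R+\mathfrak{q}_{2,t'}R)$ after using Fact~\ref{fact1}; since $I$ has no embedded primes this is an irredundant primary decomposition and $I^{(k)}=\bigcap (\mathfrak{q}_{1,t}R+\mathfrak{q}_{2,t'}R)^{(k)}$ by Fact~\ref{exercise 6.1.25}. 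The key localization computation is then that for a prime $P=\mathfrak{p}_1 R+\mathfrak{p}_2 R$ of this form, $I^{(k)}$ localized at $P$ equals $I^k R_P$; here I would pass to $R_P$ and observe that $I_1 R_P$ and $I_2 R_P$ are generated in disjoint variable sets relative to the remaining non-inverted variables, so that $I^k$ is already $P$-primary-saturated at $P$ because $I_1^k=I_1^{(k)}$ and $I_2^k=I_2^{(k)}$ in their respective rings. Assembling over all minimal primes $P$ gives $I^k=I^{(k)}$.

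An alternative, and perhaps cleaner, route for the second paragraph is purely combinatorial on exponent vectors: writing each monomial $x^a$ with $a=(a',a'')$ split according to the two variable blocks, membership of $x^a$ in $I^{(k)}$ is detected prime by prime; a minimal prime of $I$ has generators only among the first block or only among the second block (since it is $\mathfrak{p}_1R+\mathfrak{p}_2R$ with $\mathfrak{p}_i$ a minimal prime of $I_i$), so the order of $x^a$ along such a prime depends only on $a'$ or only on $a''$. Combining the constraints coming from first-block primes (which, since $I_1^{(k)}=I_1^k$, say exactly that $x^{a'}\in I_1^{i}$ for the appropriate $i$) with those from second-block primes shows $x^a\in\sum_{i+j=k}I_1^i I_2^j R=I^k$. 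I would present whichever of these two arguments turns out to be shorter after checking details.

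The main obstacle I anticipate is the claim that $I=I_1R+I_2R$ has \emph{no embedded primes} when $I_1,I_2$ do: this is the one place where the hypothesis is genuinely used beyond formal manipulation, and it needs either the tensor-product/flat-base-change description $R/I\cong(R_1/I_1)\otimes_K(R_2/I_2)$ (so that $\mathrm{Ass}_R(R/I)=\{\mathfrak{p}_1R+\mathfrak{p}_2R:\mathfrak{p}_i\in\mathrm{Ass}_{R_i}(R_i/I_i)\}$) or a careful bookkeeping of monomial primary decompositions via Fact~\ref{fact1}. Everything else — the binomial expansion of $(I_1R+I_2R)^k$, the intersection formulas for symbolic powers, and the prime-by-prime localization — is routine once that structural fact is in hand.
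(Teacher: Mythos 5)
Your proposal has a genuine gap at its very first reduction, and the gap matters because the theorem (quoted in this paper from \cite{SN} without an internal proof) is stated for \emph{arbitrary} monomial ideals, not square-free ones. You claim that a normally torsion-free ideal has no embedded primes, "by taking $s=1$ and comparing with $s$ large via Brodmann," so that Fact~\ref{proposition 4.3.29} applies to $I_1$, $I_2$, and eventually to $I$. This is false: Brodmann plus normal torsion-freeness only says the stable set of associated primes sits inside $\mathrm{Ass}_R(R/I)$; it does not force $\mathrm{Ass}_R(R/I)=\mathrm{Min}(R/I)$. A concrete counterexample is $I=(x^2,xy)=x\,(x,y)\subset K[x,y]$: one checks $(I^s:x^sy^{s-1})=(x,y)$ and $(I^s:x^{s-1}y^s)=(x)$, so $\mathrm{Ass}(R/I^s)=\{(x),(x,y)\}$ for every $s\ge 1$ and $I$ is normally torsion-free, yet $(x,y)$ is an embedded prime. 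Consequently the passage from the hypotheses to $I_\ell^k=I_\ell^{(k)}$, and likewise the claim that $I$ has no embedded primes so that proving $I^k=I^{(k)}$ suffices, are unjustified. The auxiliary tools you lean on are also calibrated to the square-free setting: Proposition~\ref{Intersection} is stated only for square-free ideals, and Fact~\ref{exercise 6.1.25} requires primary components with non-comparable radicals, which your extended decomposition $\bigcap_{t,t'}(\mathfrak{q}_{1,t}R+\mathfrak{q}_{2,t'}R)$ need not have without further argument.

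Your symbolic-power route would prove the theorem under the additional hypothesis that $I_1$ and $I_2$ are square-free (or at least have no embedded primes and remain so after summing), but not as stated. A proof in the stated generality has to work directly with associated primes of powers rather than detour through $I^k=I^{(k)}$: using $R=R_1\otimes_K R_2$ and the expansion $I^s=\sum_{i+j=s}I_1^iI_2^jR$, one shows that every $\mathfrak{P}\in\mathrm{Ass}_R\bigl(R/(I_1R+I_2R)^s\bigr)$ has the form $\mathfrak{p}_1R+\mathfrak{p}_2R$ with $\mathfrak{p}_1\in\mathrm{Ass}_{R_1}(R_1/I_1^{i})$ and $\mathfrak{p}_2\in\mathrm{Ass}_{R_2}(R_2/I_2^{j})$ for suitable $i,j\le s$, and conversely that every $\mathfrak{p}_1R+\mathfrak{p}_2R$ with $\mathfrak{p}_\ell\in\mathrm{Ass}_{R_\ell}(R_\ell/I_\ell)$ lies in $\mathrm{Ass}_R(R/I)$; normal torsion-freeness of $I_1,I_2$ then immediately gives $\mathrm{Ass}_R(R/I^s)\subseteq\mathrm{Ass}_R(R/I)$. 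That "associated primes of powers of a sum in disjoint variables" lemma is the real content you would need to supply; the binomial expansion and the disjointness bookkeeping in your sketch are fine, but they do not substitute for it.
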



\begin{lemma}\label{NCH-1}(\cite[Lemma 2.1]{NCH})
Let $I\subset R=K[x_1, \ldots, x_r]$  be a  square-free monomial ideal  and  $\mathfrak{q}$ be  a prime monomial ideal in $R$ such that $\bigcap_{\mathfrak{p}\in \mathrm{Ass}(I)}\mathfrak{p} \cap \mathfrak{q}$  is  a minimal primary decomposition of  
$I \cap \mathfrak{q}$.   Let  $I$ and $I \cap \mathfrak{q}$ be  normally torsion-free.  
Then, for all $m,n \geq 1$, we have $I^n(I\cap \mathfrak{q})^m=I^{n+m} \cap \mathfrak{q}^m$. 
  \end{lemma}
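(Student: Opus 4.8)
The inclusion $I^n(I\cap\mathfrak q)^m\subseteq I^{n+m}\cap\mathfrak q^m$ is the trivial one recorded in the Introduction (with $J=I\cap\mathfrak q$), so the whole content is the reverse inclusion, and the plan is to reduce it first to a statement about symbolic powers and then to monomials. Since $I$ is square-free it is radical, hence has no embedded primes and $I=\bigcap_{\mathfrak p\in\mathrm{Ass}(I)}\mathfrak p$; likewise $I\cap\mathfrak q$ is square-free with no embedded primes, and by hypothesis $I\cap\mathfrak q=\mathfrak q\cap\bigcap_{\mathfrak p\in\mathrm{Ass}(I)}\mathfrak p$ is its minimal primary decomposition, which forces $\mathfrak q$ to be incomparable with each $\mathfrak p\in\mathrm{Ass}(I)$ (if $\mathfrak q\subseteq\mathfrak p_0$ the component $\mathfrak p_0$ is redundant, and if $\mathfrak p_0\subseteq\mathfrak q$ the component $\mathfrak q$ is). Now Fact~\ref{proposition 4.3.29} turns normal torsion-freeness into $I^k=I^{(k)}$ and $(I\cap\mathfrak q)^k=(I\cap\mathfrak q)^{(k)}$ for all $k\ge1$, while Fact~\ref{exercise 6.1.25}, applied to the two primary decompositions above (all radicals pairwise non-comparable), gives $I^{(k)}=\bigcap_{\mathfrak p}\mathfrak p^{k}$ and $(I\cap\mathfrak q)^{(k)}=\mathfrak q^{k}\cap\bigcap_{\mathfrak p}\mathfrak p^{k}$ (equivalently one may invoke Proposition~\ref{Intersection} together with $\mathfrak q^{(k)}=\mathfrak q^{k}$, valid because $\mathfrak q$ is a monomial prime). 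Combining, $(I\cap\mathfrak q)^m=I^m\cap\mathfrak q^m$ and $I^{n+m}\cap\mathfrak q^m=\big(\bigcap_{\mathfrak p}\mathfrak p^{n+m}\big)\cap\mathfrak q^m$, so what remains is the purely monomial inclusion
\[
\Big(\bigcap_{\mathfrak p\in\mathrm{Ass}(I)}\mathfrak p^{\,n+m}\Big)\cap\mathfrak q^m\ \subseteq\ \Big(\bigcap_{\mathfrak p\in\mathrm{Ass}(I)}\mathfrak p^{\,n}\Big)\big(I^m\cap\mathfrak q^m\big).
\]

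For this I would argue on monomials. Writing $\mathfrak p=(x_i:i\in S_{\mathfrak p})$, $\mathfrak q=(x_i:i\in S_{\mathfrak q})$, and $v_{\mathfrak p}(u)=\sum_{i\in S_{\mathfrak p}}a_i$ for a monomial $u=\prod_i x_i^{a_i}$ (so that $u\in\mathfrak p^{k}$ iff $v_{\mathfrak p}(u)\ge k$), a monomial $u$ in the left-hand ideal satisfies $v_{\mathfrak p}(u)\ge n+m$ for all $\mathfrak p\in\mathrm{Ass}(I)$ and $v_{\mathfrak q}(u)\ge m$. The goal is a factorization $u=u_1u_2$ of monomials with $v_{\mathfrak p}(u_1)\ge n$ for all $\mathfrak p$ (so $u_1\in\bigcap_{\mathfrak p}\mathfrak p^{n}$) and $v_{\mathfrak p}(u_2)\ge m$ for all $\mathfrak p$ together with $v_{\mathfrak q}(u_2)\ge m$ (so $u_2\in I^m\cap\mathfrak q^m$). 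Concretely one takes $u_2$ to be a suitably small divisor of $u$ already lying in $(I\cap\mathfrak q)^{(m)}=I^m\cap\mathfrak q^m$ — such a divisor exists because $u$ itself lies there — and then must verify $v_{\mathfrak p}(u/u_2)\ge n$, i.e. $v_{\mathfrak p}(u_2)\le v_{\mathfrak p}(u)-n$, for every $\mathfrak p\in\mathrm{Ass}(I)$. The inequalities $v_{\mathfrak p}(u)\ge n+m$ give room on the $\mathfrak p$-side, and $v_{\mathfrak q}(u)\ge m$ together with the incomparability of $\mathfrak q$ and the $\mathfrak p$'s keeps the $\mathfrak q$-side consistent; the delicate point is that the two families of constraints must be satisfied simultaneously by a single \emph{integral} divisor of $u$, and this is where the full force of normal torsion-freeness — not merely incomparability — enters, via the integrality content of the hypotheses (in the language of Theorem~\ref{Villarreal1}, the TDI/integral-polyhedron conditions), which is exactly what rules out a merely fractional splitting.

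Accordingly, the step I expect to be the main obstacle is precisely the existence of this divisor $u_2$, i.e.\ the balanced splitting of the exponent vector of $u$. One natural way to organize it, which I would try first, is an induction that strips off one factor at a time, reducing the claim to the single-step identities $I\cdot(I^{N}\cap\mathfrak q^m)=I^{N+1}\cap\mathfrak q^m$ and $(I\cap\mathfrak q)\cdot(I^{N}\cap\mathfrak q^m)=I^{N+1}\cap\mathfrak q^{m+1}$, the base case $n=0$ being the already-established equality $(I\cap\mathfrak q)^m=I^m\cap\mathfrak q^m$. A second, complementary route avoids the explicit splitting by localizing: $\sqrt{I^n(I\cap\mathfrak q)^m}=I\cap\mathfrak q$, and at each $\mathfrak P\in\mathrm{Ass}(I)\cup\{\mathfrak q\}$ a direct computation (using that $\mathfrak q_{\mathfrak P}$, resp.\ $I_{\mathfrak P}$, is the unit ideal there) shows $\big(I^n(I\cap\mathfrak q)^m\big)_{\mathfrak P}=\big(I^{n+m}\cap\mathfrak q^m\big)_{\mathfrak P}$; by Fact~\ref{Exercise 6.4} this would finish the proof once one knows $\mathrm{Ass}\big(R/I^n(I\cap\mathfrak q)^m\big)\subseteq\mathrm{Ass}(I)\cup\{\mathfrak q\}$, so on this route the obstacle simply migrates to controlling the associated primes of the product, again through the normal torsion-freeness of $I$ and of $I\cap\mathfrak q$.
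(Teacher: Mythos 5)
The paper does not prove this lemma --- it is cited from~\cite{NCH} --- so there is no in-paper argument to compare your attempt against, and I assess the proposal on its own terms. Your reduction is sound: minimality of the decomposition forces $\mathfrak q$ to be incomparable with each $\mathfrak p\in\mathrm{Ass}(I)$, Facts~\ref{proposition 4.3.29} and~\ref{exercise 6.1.25} convert normal torsion-freeness of $I$ and of $I\cap\mathfrak q$ into $I^k=\bigcap_{\mathfrak p}\mathfrak p^k$ and $(I\cap\mathfrak q)^k=\mathfrak q^k\cap\bigcap_{\mathfrak p}\mathfrak p^k$, and the claim collapses to the monomial containment
\[
\Big(\bigcap_{\mathfrak p\in\mathrm{Ass}(I)}\mathfrak p^{\,n+m}\Big)\cap\mathfrak q^m\ \subseteq\ \Big(\bigcap_{\mathfrak p\in\mathrm{Ass}(I)}\mathfrak p^{\,n}\Big)\Big(\bigcap_{\mathfrak p\in\mathrm{Ass}(I)}\mathfrak p^{\,m}\cap\mathfrak q^m\Big).
\]
You are also right that this is where normal torsion-freeness must do real work and not merely incomparability: with $\mathrm{Ass}(I)=\{(x_1,x_2),(x_2,x_3),(x_1,x_3)\}$ (the non-bipartite triangle), $\mathfrak q=(x_1,x_4)$ and $n=m=1$, the monomial $u=x_1x_2x_3$ lies in the left-hand side but admits no factorization $u=u_1u_2$ of the required kind, so dropping the hypothesis breaks the inclusion.

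The proof stops exactly here, however, and that is a genuine gap rather than a cosmetic one. You assert that an integral splitting $u=u_1u_2$ exists ``by the full force of normal torsion-freeness'' and gesture at the TDI/integral-polyhedron characterization in Theorem~\ref{Villarreal1}, but no mechanism is supplied that converts that characterization into the required factorization of a given monomial $u$. The two routes you offer both relocate rather than resolve the obstruction: the single-step identity $I\cdot(I^{N}\cap\mathfrak q^m)=I^{N+1}\cap\mathfrak q^m$, on which the inductive route rests, is an equivalent restatement of the same splitting and is not established; and the localization route through Fact~\ref{Exercise 6.4} requires the local containment at every $\mathfrak P\in\mathrm{Ass}\bigl(R/I^n(I\cap\mathfrak q)^m\bigr)$, not merely at $\mathrm{Ass}(I)\cup\{\mathfrak q\}$, and you give no argument ruling out embedded primes of the product ideal $I^n(I\cap\mathfrak q)^m$ --- a fact that is itself nontrivial and comparable in strength to the target statement. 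As it stands the proposal is a correct and well-chosen reduction followed by a promissory note where the core argument should be.
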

  

\begin{fact} \label{exercise 7.9.1} (\cite[Exercise 7.9.1]{MRS})
 Let $J_1, \ldots, J_n, I$  be monomial ideals of $R$, and let $f\in [[R]].$ 
\begin{itemize}
\item[(a)]  Prove that $f(\bigcap_{i=1}^nJ_i)=\bigcap_{i=1}^n(fJ_i)$.
\item[(b)] Prove or disprove: $I(\bigcap_{i=1}^nJ_i)=\bigcap_{i=1}^n(IJ_i)$. Justify your answer.
\end{itemize}
\end{fact}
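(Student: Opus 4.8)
\textbf{Plan for Fact~\ref{exercise 7.9.1}.}

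\emph{Part (a): the equality $f\bigl(\bigcap_{i=1}^n J_i\bigr)=\bigcap_{i=1}^n(fJ_i)$ for $f$ a single monomial.}
The plan is to work entirely with monomials, using the fact that a monomial ideal is determined by the monomials it contains, and that membership of a monomial $m$ in an intersection is equivalent to membership in each factor. The inclusion $\subseteq$ is formal: $f\bigl(\bigcap_i J_i\bigr)\subseteq fJ_j$ for each $j$, hence $f\bigl(\bigcap_i J_i\bigr)\subseteq\bigcap_i(fJ_i)$, and this needs no hypothesis on $f$. For the reverse inclusion, I would take a monomial $m\in\bigcap_i(fJ_i)$; then for each $i$ there is a monomial $u_i\in J_i$ with $m=f u_i$ (here I use that $fJ_i$ is a monomial ideal whose generators are $f$ times generators of $J_i$, so any monomial of $fJ_i$ is divisible by $f$ with quotient a monomial in $J_i$). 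The crucial point is that $f$ is a \emph{single} monomial, so the quotient $m/f$ is a well-defined monomial independent of $i$; call it $w$. Then $w=u_i\in J_i$ for every $i$, hence $w\in\bigcap_i J_i$, and $m=fw\in f\bigl(\bigcap_i J_i\bigr)$. Since both sides are monomial ideals agreeing on monomials, they are equal.

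\emph{Part (b): the statement $I\bigl(\bigcap_{i=1}^n J_i\bigr)=\bigcap_{i=1}^n(IJ_i)$ is false in general.}
The plan is to disprove it with a small explicit counterexample, and the obstruction is exactly the step in part (a) that used $f$ being a single monomial: when $I$ has several generators, a monomial in $\bigcap_i(IJ_i)$ may be expressible as (generator of $I$)$\times$(monomial in $J_i$) using a \emph{different} generator of $I$ for each $i$, and there is then no common way to factor it through $\bigcap_i J_i$. Concretely, in $R=K[x,y]$ I would take $n=2$, $I=(x,y)$, $J_1=(x)$, $J_2=(y)$. Then $IJ_1=(x^2,xy)$ and $IJ_2=(xy,y^2)$, so $IJ_1\cap IJ_2=(xy)$ (the only monomials in both are multiples of $xy$). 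On the other hand $J_1\cap J_2=(xy)$, so $I(J_1\cap J_2)=(x,y)(xy)=(x^2y,xy^2)$, which does not contain $xy$. Hence $I(J_1\cap J_2)\subsetneq IJ_1\cap IJ_2$, and in particular the two are not equal; note the containment $\subseteq$ does hold in general (same formal argument as in part (a)), so the failure is only in the reverse direction.

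\emph{Where the work lies.} Part (a) is routine once the monomial bookkeeping is set up correctly; the only subtlety is to justify that every monomial of $fJ_i$ is $f$ times a monomial of $J_i$, which follows because $\mathcal{G}(fJ_i)=\{f v : v\in\mathcal{G}(J_i)\}$ and divisibility of monomials. For part (b) the main (and only) task is to produce and verify a counterexample; I expect the $I=(x,y)$, $J_1=(x)$, $J_2=(y)$ example above to be the cleanest, and verifying the three intersections and products is a short direct computation with monomials. No deep input is needed beyond the elementary structure theory of monomial ideals already in use in the paper.
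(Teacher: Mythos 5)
Your proposal is correct. The paper does not actually prove this statement: it is quoted as a Fact from \cite[Exercise 7.9.1]{MRS} and used without proof, so there is no in-paper argument to compare against; I can only assess your argument on its own, and it holds up. For part (a), your reasoning is the standard one and is sound: the inclusion $f\bigl(\bigcap_{i}J_i\bigr)\subseteq\bigcap_{i}(fJ_i)$ is formal, and for the converse the key point is exactly the one you isolate, namely that $f$ is a single monomial, so every monomial $m\in fJ_i$ is divisible by $f$ with monomial quotient $m/f\in J_i$, and this quotient is independent of $i$; since both sides are monomial ideals, agreement on monomials gives equality. For part (b), the intended answer is indeed that only $I\bigl(\bigcap_i J_i\bigr)\subseteq\bigcap_i(IJ_i)$ holds in general, and your counterexample is correct: with $I=(x,y)$, $J_1=(x)$, $J_2=(y)$ in $K[x,y]$ one gets $IJ_1\cap IJ_2=(xy)$ while $I(J_1\cap J_2)=(x^2y,\,xy^2)$, which misses $xy$. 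It is also worth noting that the paper only ever invokes part (a) (in the proof of Theorem \ref{NTT-2}, with $f$ a power of $x_\alpha x_\beta$), so your argument covers everything the paper actually relies on.
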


\begin{lemma} \label{NTF1} (\cite[Lemma 2.17]{NQBM})
Let $I$ be a normally torsion-free square-free  monomial ideal in a polynomial ring $R=K[x_{1},\ldots ,x_{n}]$ with $\mathcal{G}(I) \subset R$. Then the ideal $$L:=IS\cap (x_{n}, x_{n+1}, x_{n+2}, \ldots, x_m)\subset  S=R[x_{n+1}, x_{n+2}, \ldots, x_m],$$  is normally torsion-free.
\end{lemma}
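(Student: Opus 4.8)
The plan is to reduce the claim to the identity $L^{(k)}=L^{k}$ for all $k\ge 1$, and then to establish this identity by a direct monomial computation. Set $\mathfrak{q}:=(x_{n},x_{n+1},\ldots ,x_{m})\subseteq S$, so $L=IS\cap\mathfrak{q}$. As $IS$ and $\mathfrak{q}$ are square-free monomial ideals, so is $L$; in particular $L$ is radical, hence has no embedded primes, so by Fact~\ref{proposition 4.3.29} it is enough to prove $L^{(k)}=L^{k}$ for every $k\ge 1$. On the symbolic side I would apply Proposition~\ref{Intersection} to $IS$ and $\mathfrak{q}$ to get $L^{(k)}=(IS)^{(k)}\cap\mathfrak{q}^{(k)}$. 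Now $\mathfrak{q}$ is a monomial prime, so $\mathfrak{q}^{k}$ is $\mathfrak{q}$-primary and $\mathfrak{q}^{(k)}=\mathfrak{q}^{k}$; and $IS$ is square-free and normally torsion-free (normal torsion-freeness of $I$ passes to $IS$, since $\mathrm{Ass}_{S}(S/I^{k}S)=\{\mathfrak{p}S:\mathfrak{p}\in\mathrm{Ass}_{R}(R/I^{k})\}$), hence has no embedded primes, so Fact~\ref{proposition 4.3.29} gives $(IS)^{(k)}=(IS)^{k}=I^{k}S$. Therefore $L^{(k)}=I^{k}S\cap\mathfrak{q}^{k}$, and the lemma reduces to the equality $L^{k}=I^{k}S\cap\mathfrak{q}^{k}$.

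The inclusion $L^{k}=(IS\cap\mathfrak{q})^{k}\subseteq (IS)^{k}\cap\mathfrak{q}^{k}=I^{k}S\cap\mathfrak{q}^{k}$ is trivial, so the real task is the reverse inclusion, which I would handle by an explicit construction. Let $u$ be a monomial in $I^{k}S\cap\mathfrak{q}^{k}$ and write $a_{i}:=\deg_{x_{i}}(u)$. Since $u\in I^{k}S$ and the generators of $I^{k}S$ coincide with those of $I^{k}$, which involve only $x_{1},\ldots ,x_{n}$, there exist $w_{1},\ldots ,w_{k}\in\mathcal{G}(I)$ with $w_{1}\cdots w_{k}\mid u$; in particular $\deg_{x_{i}}(w_{1}\cdots w_{k})\le a_{i}$ for $i\le n$. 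Let $t$ be the number of indices $j$ with $x_{n}\mid w_{j}$, so that $\deg_{x_{n}}(w_{1}\cdots w_{k})=t\le a_{n}$. For these $t$ indices, $w_{j}$ itself already lies in $IS\cap\mathfrak{q}=L$. For each of the remaining $k-t$ indices I would replace $w_{j}$ by $\mathrm{lcm}(w_{j},x_{i_{j}})=w_{j}x_{i_{j}}$ for a suitable index $i_{j}\in\{n,n+1,\ldots ,m\}$, which again lies in $L$; the product of the resulting $k$ elements of $L$ then divides $u$ as soon as the $k-t$ chosen variables $x_{i_{j}}$ are distributed so that at most $a_{n}-t$ of them equal $x_{n}$ and at most $a_{i}$ of them equal $x_{i}$ for $n<i\le m$ (the exponents of $x_{1},\ldots ,x_{n}$ being already controlled by $w_{1}\cdots w_{k}\mid u$). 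Such a distribution exists because the total capacity $(a_{n}-t)+\sum_{i=n+1}^{m}a_{i}=\big(\sum_{i=n}^{m}a_{i}\big)-t$ is at least $k-t$, using that $u\in\mathfrak{q}^{k}$ forces $\sum_{i=n}^{m}a_{i}\ge k$. Hence $u\in L^{k}$, giving $I^{k}S\cap\mathfrak{q}^{k}\subseteq L^{k}$.

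Combining the two steps gives $L^{k}=I^{k}S\cap\mathfrak{q}^{k}=L^{(k)}$ for all $k\ge 1$, so $L$ is normally torsion-free. The only genuine content is the distribution argument in the second step; the rest is a routine assembly of Proposition~\ref{Intersection}, Fact~\ref{proposition 4.3.29}, and the primality of $\mathfrak{q}$. The point I expect to require the most care is the bookkeeping of the exponent $a_{n}$, since $x_{n}$ plays a double role---it may appear inside the generators $w_{j}$ of $I$ and it also belongs to $\mathfrak{q}$---so one must make sure its budget is not overspent; it is exactly the inequalities $t\le a_{n}$ (from $w_{1}\cdots w_{k}\mid u$) and $\sum_{i=n}^{m}a_{i}\ge k$ (from $u\in\mathfrak{q}^{k}$) that make the construction feasible.
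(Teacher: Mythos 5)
Your argument is correct. One should note that the paper does not itself prove Lemma~\ref{NTF1}; it simply cites it from \cite[Lemma 2.17]{NQBM}, so there is no in-paper proof to compare against. Your reduction via Fact~\ref{proposition 4.3.29} to $L^{(k)}=L^{k}$, the splitting $L^{(k)}=(IS)^{(k)}\cap\mathfrak{q}^{(k)}$ by Proposition~\ref{Intersection}, the passage to $(IS)^{(k)}=I^{k}S$ using stability of associated primes under polynomial extension, and the combinatorial distribution argument all check out; the only delicate point, the budget for $x_{n}$, is handled correctly by the inequalities $t\le a_{n}$ (from $w_{1}\cdots w_{k}\mid u$ and square-freeness of the $w_{j}$) and $\sum_{i=n}^{m}a_{i}\ge k$ (from $u\in\mathfrak{q}^{k}$). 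In fact the feasibility of distributing $k-t$ units across bins with capacities $a_{n}-t,a_{n+1},\ldots,a_{m}$ is precisely an instance of Lemma~\ref{lem:bounded-sum} from the paper, which you could invoke to shorten the bookkeeping; it is also worth observing that your method essentially re-derives, in this special case, a statement of the same flavor as Lemma~\ref{NCH-1} and Proposition~\ref{Demotion-3}, so the argument is entirely consistent with the paper's toolkit even though the paper does not spell it out.
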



\section{Demotions of ideals in commutative rings} \label{Section 3}

  In this section, we investigate some general properties of demotions of ideals in commutative rings. To accomplish this, we commence by presenting the following proposition.

\begin{proposition} \label{Pro.General.1}
Let $J \subseteq I$ be ideals  in a commutative Noetherian ring $R$. Then the following statements are equivalent:
\begin{itemize}
\item[(i)]  $J$ is  a demotion of $I$;
\item[(ii)]  $W^{-1}J$ is a demotion of $W^{-1}I$ for every multiplicatively closed subset $W$ of $R$; 
\item[(iii)] $J_{\mathfrak{p}}$ is a demotion of $I_{\mathfrak{p}}$ for every prime ideal $\mathfrak{p}$ of $R$. 

\end{itemize}
\end{proposition}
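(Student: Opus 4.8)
The plan is to prove the cycle of implications $(i) \Rightarrow (ii) \Rightarrow (iii) \Rightarrow (i)$, exploiting that localization is exact and commutes with the formation of powers, products, and finite intersections of ideals.

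For $(i) \Rightarrow (ii)$: assume $J$ is a demotion of $I$, and fix a multiplicatively closed subset $W$ of $R$ together with $r,s \ge 0$. Localizing the equality $I^r J^s = I^{r+s} \cap J^s$ at $W$, I would use that $W^{-1}(I^r J^s) = (W^{-1}I)^r (W^{-1}J)^s$ and, crucially, that localization commutes with finite intersections, so $W^{-1}(I^{r+s} \cap J^s) = (W^{-1}I)^{r+s} \cap (W^{-1}J)^s$. Since $W^{-1}J \subseteq W^{-1}I$ automatically, this yields exactly that $W^{-1}J$ is a demotion of $W^{-1}I$.

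The implication $(ii) \Rightarrow (iii)$ is immediate by specializing $W = R \setminus \mathfrak{p}$ for each prime $\mathfrak{p}$. For $(iii) \Rightarrow (i)$: fix $r,s \ge 0$; I must show $I^{r+s} \cap J^s \subseteq I^r J^s$ (the reverse containment is the trivial one noted in the introduction). The idea is to test this containment locally at every prime. Setting $A := R$, $\mathfrak{a} := I^r J^s$ and $\mathfrak{b} := I^{r+s} \cap J^s$, we have $\mathfrak{a} \subseteq \mathfrak{b}$, and hypothesis $(iii)$ together with the localization identities above gives $\mathfrak{a}_{\mathfrak{p}} = \mathfrak{b}_{\mathfrak{p}}$ for every prime $\mathfrak{p}$; in particular $\mathfrak{b}_{\mathfrak{p}} \subseteq \mathfrak{a}_{\mathfrak{p}}$ for every $\mathfrak{p} \in \mathrm{Ass}_R(R/\mathfrak{a})$. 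Applying Fact~\ref{Exercise 6.4} with the ideal $I$ there taken to be $\mathfrak{a}$ and the ideal $J$ there taken to be $\mathfrak{b}$ (legitimate since $R$ is Noetherian), we conclude $\mathfrak{b} \subseteq \mathfrak{a}$, i.e.\ $I^{r+s} \cap J^s \subseteq I^r J^s$, as desired.

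The only genuinely delicate point is the bookkeeping in $(iii) \Rightarrow (i)$: one needs the containment $J \subseteq I$ as a standing hypothesis so that everything stays inside the demotion framework, and one must be careful that the appeal to Fact~\ref{Exercise 6.4} is to the inclusion of the two \emph{derived} ideals $I^r J^s$ and $I^{r+s} \cap J^s$, not to $I$ and $J$ themselves. Beyond that, the argument is routine: the exactness of localization and its compatibility with sums, products, and finite intersections of ideals handle all the algebraic identities, and no Noetherian hypothesis is needed except to invoke Fact~\ref{Exercise 6.4} in the last step.
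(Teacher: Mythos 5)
Your proposal takes essentially the same route as the paper: the implications $(i)\Rightarrow(ii)$ and $(ii)\Rightarrow(iii)$ are argued identically via exactness of localization and specializing $W=R\setminus\mathfrak{p}$, and the implication $(iii)\Rightarrow(i)$ hinges on exactly the same invocation of Fact~\ref{Exercise 6.4}, applied to the two derived ideals $I^rJ^s$ and $I^{r+s}\cap J^s$ with the roles of ``$I$'' and ``$J$'' in that fact chosen just as the paper does. Your remark that the Noetherian hypothesis is used only in this last step is accurate and a nice clarification, but the substance of the argument coincides with the paper's.
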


\begin{proof}
(i) $\Rightarrow$ (ii). We  show that $(W^{-1}I)^r(W^{-1}J)^s=(W^{-1}I)^{r+s} \cap (W^{-1}J)^s$, where  $r,s \geq 0$ and $W$ is a  multiplicatively closed subset of $R$. Clearly,   $W^{-1}J \subseteq W^{-1}I$. As  $I^rJ^s=I^{r+s} \cap J^s$, we get the following equalities
\begin{align*}
(W^{-1}I)^r (W^{-1}J)^s=& W^{-1}(I^r) W^{-1}(J^s)\\
=&W^{-1}(I^rJ^s)\\
=&W^{-1}(I^{r+s}\cap J^s)\\
=& (W^{-1}I)^{r+s} \cap (W^{-1}J)^s.
\end{align*}
This gives  that $W^{-1}J$ is a demotion of $W^{-1}I$, as desired. 

By choosing $W=R\setminus \mathfrak{p}$, (ii) implies (iii).  To complete the proof, we need to show that (iii) implies (i). To do this, we assume that $J_{\mathfrak{p}}$ is a demotion of $I_{\mathfrak{p}}$ for  every prime ideal $\mathfrak{p}$ of $R$. Fix $r,s\geq 0$. Since $I^rJ^s \subseteq I^{r+s} \cap J^s$, it remains to prove that   $I^{r+s}\cap J^s \subseteq I^rJ^s$.  Pick an arbitrary element $\mathfrak{q}\in \mathrm{Ass}(R/I^rJ^s)$.  
Due to $J_{\mathfrak{p}}$ is a demotion of $I_{\mathfrak{p}}$ for every prime ideal $\mathfrak{p}$ of $R$, this yields that 
$I_{\mathfrak{q}}^{r+s}\cap J_{\mathfrak{q}}^s \subseteq I_{\mathfrak{q}}^rJ_{\mathfrak{q}}^s$. This leads to 
$(I^{r+s}\cap J^s)_{\mathfrak{q}} \subseteq (I^rJ^s)_{\mathfrak{q}}$. It follows at once  from Fact \ref{Exercise 6.4} that  $I^{r+s}\cap J^s \subseteq I^rJ^s$, 
as required. This finishes the proof.
\end{proof}


Recall that if $f:A \rightarrow B$  is a ring homomorphism and $I$  is an ideal of $A$, we will write $IB$ for the ideal $f(I)B$ of $B$. The ideal $IB$ is called the \textit{extension}   of $I$ to $B$, and is often denoted by $I^e$.

\begin{proposition} \label{Pro.General.2}
Let  $J \subseteq I$ be ideals  in a commutative Noetherian ring $R$, and   $f: R \rightarrow S$ be a ring homomorphism.  Denote $I^e:=IS$ and $J^e:=JS$. 
Then the following statements hold.
\begin{itemize}
\item[(i)]  If $J$ is  a demotion of $I$, then $J^k$ is  a demotion of $I^k$ for all $k\geq 1$. 
\item[(ii)] If $S$ is flat over $R$ and $J$ is a demotion of $I$ in $R$, then $J^e$ is a demotion of $I^e$ in $S$.
\item[(iii)]  If $S$ is faithfully flat over $R$ and $J^e$ is a demotion of $I^e$ in $S$, then $J$ is a demotion of $I$ in $R$. 
\end{itemize}
\end{proposition}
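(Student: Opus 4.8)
The plan is to handle the three parts independently; each reduces to a short manipulation once the interaction of the operations $\cap$, product, power and $(-)S$ is spelled out.

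For part (i) I would simply substitute into the defining equality. What must be shown is $(I^{k})^{r}(J^{k})^{s}=(I^{k})^{r+s}\cap(J^{k})^{s}$ for all $r,s\geq 0$, and this is literally $I^{kr}J^{ks}=I^{k(r+s)}\cap J^{ks}$, i.e.\ the demotion equality for the pair $I\supseteq J$ evaluated at the exponents $kr$ and $ks$ in place of $r$ and $s$. Since $J$ is a demotion of $I$, that equality holds for \emph{all} nonnegative exponents, so in particular for $kr,ks$; no further work is needed.

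For part (ii) the inputs are the standard properties of a flat $R$-algebra $S$: for ideals $\mathfrak a,\mathfrak b$ of $R$ one has $(\mathfrak a\mathfrak b)S=(\mathfrak aS)(\mathfrak bS)$ (hence $(\mathfrak a^{n})S=(\mathfrak aS)^{n}$) and, crucially, $(\mathfrak a\cap\mathfrak b)S=\mathfrak aS\cap\mathfrak bS$, the last obtained by tensoring the inclusions $\mathfrak a\cap\mathfrak b\hookrightarrow\mathfrak a,\mathfrak b\hookrightarrow R$ with the flat module $S$. Granting these, I would compute
\begin{align*}
(I^{e})^{r}(J^{e})^{s} &= (I^{r}S)(J^{s}S) = (I^{r}J^{s})S \\
&= (I^{r+s}\cap J^{s})S = (I^{r+s}S)\cap(J^{s}S) = (I^{e})^{r+s}\cap(J^{e})^{s},
\end{align*}
where the middle equality is exactly the hypothesis that $J$ is a demotion of $I$ in $R$. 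This shows $J^{e}$ is a demotion of $I^{e}$.

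For part (iii) faithful flatness enters twice. First, since faithfully flat implies flat, the computation of (ii) still applies and yields $(I^{r}J^{s})S=(I^{e})^{r}(J^{e})^{s}$ as well as $(I^{r+s}\cap J^{s})S=(I^{e})^{r+s}\cap(J^{e})^{s}$; the demotion hypothesis in $S$ then gives $(I^{r}J^{s})S=(I^{r+s}\cap J^{s})S$. Second, I would use faithfully flat descent: if $\mathfrak a\subseteq\mathfrak b$ are ideals of $R$ with $\mathfrak aS=\mathfrak bS$, then tensoring $0\to\mathfrak a\to\mathfrak b\to\mathfrak b/\mathfrak a\to 0$ with $S$ (flatness identifying $\mathfrak aS,\mathfrak bS$ as submodules of $S$) gives $(\mathfrak b/\mathfrak a)\otimes_{R}S\cong\mathfrak bS/\mathfrak aS=0$, whence $\mathfrak b/\mathfrak a=0$. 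Applying this with $\mathfrak a=I^{r}J^{s}$ and $\mathfrak b=I^{r+s}\cap J^{s}$ (the containment $\mathfrak a\subseteq\mathfrak b$ always holds, as recalled in the introduction) gives $I^{r}J^{s}=I^{r+s}\cap J^{s}$, so $J$ is a demotion of $I$ in $R$. I do not anticipate any real obstacle; the only points requiring care are quoting the identity $(\mathfrak a\cap\mathfrak b)S=\mathfrak aS\cap\mathfrak bS$ with the correct hypothesis (flatness, not merely that $f$ is a ring map), and, in (iii), remembering to translate the equality on the $S$-side back into an equality of \emph{extended} ideals before invoking faithful descent.
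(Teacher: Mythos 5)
Your proposal is correct and, for parts (i) and (ii), follows the same path as the paper: (i) is a direct substitution of exponents $kr, ks$ into the demotion equality, and (ii) applies extension and uses that a flat $R$-algebra commutes with finite products (always true) and with finite intersections (needs flatness). The paper spells out the intersection-commutes-with-extension point via $-\otimes_R S$ exactness, just as you do.

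In part (iii) your route differs cosmetically but not essentially from the paper's. You reuse (ii) to obtain $(I^{r}J^{s})S=(I^{r+s}\cap J^{s})S$ in $S$ and then invoke faithfully flat descent on the exact sequence $0\to I^{r}J^{s}\to I^{r+s}\cap J^{s}\to Q\to 0$, concluding $Q\otimes_R S=0\Rightarrow Q=0$. The paper instead applies the contraction functor directly to the demotion equality in $S$, using that contraction distributes over intersections (always true for $f^{-1}$) and the faithful-flatness identity $(K^{e})^{c}=K$; this does not reuse the flat $\cap$-extension from (ii). The two arguments rest on the same underlying fact, since $(K^{e})^{c}=K$ for faithfully flat $R\to S$ is itself usually proved by the very Tor-vanishing descent you invoke. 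Your version has the small advantage of making (iii) a corollary of (ii), while the paper's is a one-pass contraction argument. Either is acceptable; both are correct.
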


\begin{proof}
(i) Let  $J$ be  a demotion of $I$.  Fix $r,s\geq 0$ and $k\geq 1$. It is clear that $J^k\subseteq I^k$.  Because $J$ is  a demotion of $I$, we have $I^a J^b=I^{a+b} \cap J^b$ for all $a,b\geq 0$. 
 This implies that  $$(I^k)^r (J^k)^s=I^{kr} J^{ks}=I^{kr+ks} \cap J^{ks}= (I^k)^{r+s} \cap (J^k)^s.$$ 
This means that $J^k$ is  a demotion of $I^k$, as claimed. 

(ii)   Assume $S$ is flat and $I^rJ^s = I^{r+s}\cap J^s$ for all $r,s\geq 0$. It is obvious  that $J^e\subseteq I^e$.
Recall that the extension of ideals is given by tensoring with $S$ and then taking the image in $S$, that is, 
for any ideal $K\subseteq R$ we have $K^e=KS=\operatorname{im}(K\otimes_R S\to S)$.
Since $S$ is flat, the functor $-\otimes_R S$ is exact. Exactness gives the following standard consequences:

\begin{itemize}
\item For all ideals $X,Y\subseteq R$, $(X\cap Y)\otimes_R S \xrightarrow{\ \cong\ } (X\otimes_R S)\cap (Y\otimes_R S),$

and passing to images in $S$ yields $(X\cap Y)^e = X^e\cap Y^e.$
\item For all ideals $X,Y\subseteq R$, $(XY)\otimes_R S \xrightarrow{\ \cong\ } (X\otimes_R S)(Y\otimes_R S),$
and so $(XY)^e = X^e Y^e,$ and more generally $(X^n)^e=(X^e)^n$ for $n\geq 0$.
\end{itemize}

Apply extension to $I^rJ^s = I^{r+s}\cap J^s$, we get  $(I^rJ^s)^e = (I^{r+s}\cap J^s)^e.$
By multiplicativity and power-compatibility of extension, we have $(I^e)^r (J^e)^s = (I^rJ^s)^e.$
 Moreover, we obtain $(I^{r+s}\cap J^s)^e = (I^{r+s})^e \cap (J^s)^e = (I^e)^{r+s}\cap (J^e)^s.$
 Combining the equalities yields $(I^e)^r (J^e)^s = (I^e)^{r+s}\cap (J^e)^s$ for all $r,s\geq 0$, as required.
\medskip

(iii) Assume $S$ is faithfully flat and that  $(I^e)^r (J^e)^s = (I^e)^{r+s}\cap (J^e)^s$ for all $r,s\geq 0.$
We want to show that  $I^rJ^s = I^{r+s}\cap J^s$ for all $r,s\geq 0$. Recall this key fact for faithfully flat property that  for every ideal $K\subseteq R$, 
 one has  $(K^e)^c = K,$ where $(-)^c$ denotes contraction to $R$.  Also contraction commutes with finite intersections, that is, for any two  ideals $X,Y\subseteq S$, $(X\cap Y)^c = X^c\cap Y^c$. Apply contraction to $(I^e)^r (J^e)^s = (I^e)^{r+s}\cap (J^e)^s$, we obtain $\big((I^e)^r (J^e)^s\big)^c = \big((I^e)^{r+s}\cap (J^e)^s\big)^c.$
Using   the fact $(K^e)^c = K$ for any ideal $K\subseteq R$, we have 
$\big((I^e)^r (J^e)^s\big)^c = \big((I^rJ^s)^e\big)^c = I^rJ^s.$ Furthermore,   
\[
\big((I^e)^{r+s}\cap (J^e)^s\big)^c
= \big((I^{r+s})^e\big)^c \cap \big((J^s)^e\big)^c
= I^{r+s}\cap J^s.
\]
This leads to $I^rJ^s = I^{r+s}\cap J^s$ for all $r,s\geq 0$, which is exactly the demotion property for $J$ with respect to $I$ in $R$.
\end{proof}


\section{Some classes of monomial ideals with the demotion property} \label{Section 4}

 The main aim of this section is to identify some classes of monomial ideals that have the demotion property. To achieve this, we present the first such class in Proposition \ref{Demotion-1}. The following lemma is essential for completing the proof of Proposition \ref{Demotion-1}.

\begin{lemma} \label{lem:bounded-sum}
Let $f_1,\dots,f_m$ be nonnegative integers and let $t$ be a nonnegative integer such that $\sum_{i=1}^m f_i \ge t.$ 
Then there exist integers $c_1,\dots,c_m$ satisfying $0\le c_i\le f_i$ for all $1\leq i \leq m$ and $\sum_{i=1}^m c_i=t$.
\end{lemma}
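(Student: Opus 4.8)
This is a simple greedy/induction argument. The plan is to induct on $m$. For $m=1$: the hypothesis $f_1 \ge t$ together with $t \ge 0$ lets us take $c_1 = t$, and $0 \le c_1 \le f_1$ holds. For the inductive step, suppose the claim holds for $m-1$ nonnegative integers. Given $f_1,\dots,f_m$ with $\sum_{i=1}^m f_i \ge t$, split into two cases according to whether $f_m$ is large or small relative to $t$.

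First I would handle the case $f_m \ge t$: then set $c_m = t$ and $c_1 = \dots = c_{m-1} = 0$; all constraints $0 \le c_i \le f_i$ are satisfied (using $f_i \ge 0$) and the sum is $t$. Otherwise $f_m < t$, so $t - f_m$ is a nonnegative integer (in fact positive) and $\sum_{i=1}^{m-1} f_i = \bigl(\sum_{i=1}^m f_i\bigr) - f_m \ge t - f_m \ge 0$. Apply the induction hypothesis to $f_1,\dots,f_{m-1}$ with target $t' := t - f_m$ to obtain $c_1,\dots,c_{m-1}$ with $0 \le c_i \le f_i$ and $\sum_{i=1}^{m-1} c_i = t - f_m$. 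Then put $c_m := f_m$; since $f_m < t$ we have $c_m \ge 0$, and trivially $c_m \le f_m$, while $\sum_{i=1}^m c_i = (t - f_m) + f_m = t$. This completes the induction.

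Alternatively, a direct (non-inductive) argument works just as cleanly: define $c_i$ greedily by a running cap. Set $S_0 = 0$ and, for $i = 1, \dots, m$, let $c_i := \min\{ f_i,\; \max\{0,\; t - S_{i-1}\}\}$ where $S_i := \sum_{j \le i} c_j$. One checks $0 \le c_i \le f_i$ by construction, that the partial sums $S_i$ are nondecreasing and never exceed $t$, and that $S_m = t$ because $\sum f_i \ge t$ guarantees the caps never force us to stop short. There is essentially no obstacle here; the only thing to be careful about is the bookkeeping in the edge cases ($t = 0$, or some $f_i = 0$), which both formulations handle automatically since $0$ is an allowed value for each $c_i$. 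I would present the inductive version, as it is the shortest to write.
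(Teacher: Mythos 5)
Your inductive argument is correct and complete. The paper, however, proceeds by a direct greedy construction rather than induction: it sets $r_0 := t$ and, for $i = 1, \dots, m$, defines $c_i := \min\{f_i, r_{i-1}\}$ and $r_i := r_{i-1} - c_i$; then $0 \le c_i \le f_i$ and $r_i \ge 0$ are immediate, and telescoping gives $\sum_i c_i = t - r_m$, so it remains to show $r_m = 0$, which the paper does by contradiction (if $r_m > 0$ then no $f_i$ can ever exceed $r_{i-1}$, forcing $c_i = f_i$ for all $i$ and hence $\sum c_i = \sum f_i \ge t$, contradicting $\sum c_i = t - r_m < t$). Your secondary ``running cap'' sketch is essentially this argument, modulo notation ($r_{i-1} = t - S_{i-1}$, and the $\max\{0,\cdot\}$ is superfluous since the paper shows $r_i \ge 0$ automatically). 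Both routes are elementary and short; the induction you present is a little more routine, while the paper's greedy construction is more explicit and makes the telescoping bookkeeping transparent. Either would be acceptable.
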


\begin{proof}
Set $r_0:=t$ and, for each $i=1,\dots,m$, define  $c_i:=\min\{f_i,r_{i-1}\}$ and $r_i:=r_{i-1}-c_i.$ 
By construction, we have  $0\le c_i\le f_i$ and $r_i\ge0$ for all $i$. Summing the telescoping differences gives $\sum_{i=1}^m c_i = r_0 - r_m = t - r_m,$ 
so it remains to prove $r_m=0$.

Assume for contradiction that $r_m>0$. Then $r_{i-1}>0$ for every $i$, so if for some index $i$ we had $f_i>r_{i-1}$ we would get $c_i=r_{i-1}$ and hence $r_i=0$, contradicting $r_m>0$. Thus $f_i\le r_{i-1}$ for all $i$, and consequently $c_i=f_i$ for every $i$. This implies that $\sum_{i=1}^m c_i = \sum_{i=1}^m f_i \ge t,$ 
 but also $\sum_{i=1}^m c_i = t-r_m < t$, a contradiction. Hence, we must have  $r_m=0$ and $\sum_{i=1}^m c_i=t$. This finishes the proof. 
\end{proof}


\begin{proposition} \label{Demotion-1}
Let $\mathfrak{p}$ and $\mathfrak{q}$ be monomial prime ideals in 
$R = K[x_1, \dots, x_n]$ with $\mathfrak{q} \subseteq \mathfrak{p}$. 
Then $\mathfrak{q}$ is a demotion of $\mathfrak{p}$.
\end{proposition}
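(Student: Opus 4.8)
The plan is to reduce the equality to a statement about individual monomials. Write $\mathfrak{p} = (x_i : i \in A)$ and $\mathfrak{q} = (x_i : i \in B)$ for subsets $A, B \subseteq \{1,\dots,n\}$; the hypothesis $\mathfrak{q}\subseteq\mathfrak{p}$ forces $B\subseteq A$. The inclusion $\mathfrak{p}^r\mathfrak{q}^s\subseteq\mathfrak{p}^{r+s}\cap\mathfrak{q}^s$ is the trivial containment recorded in the introduction, so the entire task is the reverse inclusion; and since all four ideals in sight are monomial, it suffices to fix $r,s\geq 0$, take an arbitrary monomial $u = \prod_{i=1}^n x_i^{a_i}$ lying in $\mathfrak{p}^{r+s}\cap\mathfrak{q}^s$, and show $u\in\mathfrak{p}^r\mathfrak{q}^s$.

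Next I would record the elementary translation of membership in a power of a monomial prime into a linear inequality on exponents: for any subset $S$ and any $k\geq 0$, a monomial $u$ lies in $(x_i : i\in S)^k$ if and only if $\sum_{i\in S} a_i\geq k$. Under this dictionary the hypotheses on $u$ become $\sum_{i\in A}a_i\geq r+s$ and $\sum_{i\in B}a_i\geq s$.

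The heart of the argument is to peel off from $u$ a divisor $v$ supported only on the variables indexed by $B$ and having $B$-degree \emph{exactly} $s$. Applying Lemma~\ref{lem:bounded-sum} to the nonnegative integers $\{a_i\}_{i\in B}$ with target $t = s$ (permissible precisely because $\sum_{i\in B}a_i\geq s$) produces exponents $0\leq c_i\leq a_i$ with $\sum_{i\in B}c_i = s$; set $v := \prod_{i\in B}x_i^{c_i}$. Then $v$ divides $u$ and $v\in\mathfrak{q}^s$. Put $w := u/v$, a genuine monomial. Since $B\subseteq A$, the $A$-degree of $v$ equals $\sum_{i\in B}c_i = s$, so the $A$-degree of $w$ is $\sum_{i\in A}a_i - s\geq (r+s)-s = r$; by the dictionary, $w\in\mathfrak{p}^r$. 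Hence $u = vw\in\mathfrak{q}^s\mathfrak{p}^r = \mathfrak{p}^r\mathfrak{q}^s$, which gives the reverse inclusion and finishes the proof.

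I do not expect a serious obstacle here; the only points needing care are that the extracted divisor $v$ must genuinely divide $u$ with its $B$-degree pinned to $s$ rather than merely at least $s$ — which is exactly what Lemma~\ref{lem:bounded-sum} delivers — and the observation that confining $\mathrm{supp}(v)$ to $B\subseteq A$ makes its $A$-degree equal to its $B$-degree, so that the residual monomial $w$ lands in $\mathfrak{p}^r$ on the nose.
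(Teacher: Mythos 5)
Your argument is correct and is essentially the same as the paper's: both pin down membership in $(x_i : i\in S)^k$ via the exponent sum over $S$, both invoke Lemma~\ref{lem:bounded-sum} to peel off a $\mathfrak{q}^s$-factor with $B$-degree exactly $s$, and both then check the residual monomial lands in $\mathfrak{p}^r$. The only cosmetic difference is that the paper normalizes $A$ and $B$ to initial segments $\{1,\dots,c\}\supseteq\{1,\dots,d\}$ by a `without loss of generality', whereas you keep abstract index sets.
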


\begin{proof}
Fix $r,s \geq 0$. Without loss of generality, we may write
\[
\mathfrak{p} = (x_1, \dots, x_c)
\quad \text{and} \quad 
\mathfrak{q} = (x_1, \dots, x_d),
\quad \text{with } 1 \leq d \leq c \leq n.
\]
Since $\mathfrak{p}^r \mathfrak{q}^s \subseteq \mathfrak{p}^{r+s} \cap \mathfrak{q}^s$, 
it remains to show the reverse inclusion. Take a monomial $m=x_1^{\beta_1}\cdots x_n^{\beta_n}\in\mathfrak{p}^{\,r+s}\cap\mathfrak{q}^s.$
Due to  $m\in\mathfrak{q}^s$, we have $\sum_{i=1}^d\beta_i\ge s$, and since
$m\in\mathfrak{p}^{\,r+s}$, we get  $\sum_{i=1}^c\beta_i\ge r+s$. By virtue of Lemma  \ref{lem:bounded-sum}, we can choose nonnegative integers $\beta_i''\le\beta_i$ for all $1\le i\le d$ with  $\sum_{i=1}^d\beta_i''=s$.  For $n\geq i>d$ set $\beta_i''=0$, and put $\beta_i':=\beta_i-\beta_i''$ 
for all $i=1,\dots,n$. Now, define monomials
\[
m_1=x_1^{\beta_1'}\cdots x_n^{\beta_n'}\;\; \text{and} \;\;
m_2=x_1^{\beta_1''}\cdots x_d^{\beta_d''}.
\]
By construction $m=m_1m_2$, and $m_2\in\mathfrak{q}^s$ due to  $\sum_{i=1}^d\beta_i''=s$. Moreover,
\[
\sum_{i=1}^c\beta_i'=\sum_{i=1}^c\beta_i-\sum_{i=1}^c\beta_i''
=\sum_{i=1}^c\beta_i-\sum_{i=1}^d\beta_i''\ge (r+s)-s=r,
\]
since  $\beta_i''=0$ for $i>d$. Hence, we obtain $m_1\in\mathfrak{p}^r$, and so  $m=m_1m_2\in\mathfrak{p}^r\mathfrak{q}^s$. This proves that $\mathfrak{p}^{\,r+s}\cap\mathfrak{q}^s\subseteq\mathfrak{p}^r\mathfrak{q}^s$. 
Thus,  $\mathfrak{q}$ is a demotion of $\mathfrak{p}$.
\end{proof}


\begin{proposition} \label{Demotion-4}
Let $\mathfrak{p}=(x_1, \dots, x_m) \subset K[x_1, \dots, x_n]$, $1\leq m\leq n$, be a monomial prime ideal and  $J=(x_1^m, \dots, x_m^m)$.  
Then  $J$ is a demotion of $\mathfrak{p}^m$. In particular, if $m>1$, then $J$ is a proper demotion of $\mathfrak{p}^m$. 
\end{proposition}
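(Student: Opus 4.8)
Here is how I would establish Proposition \ref{Demotion-4}.

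The plan is to verify the defining equality $(\mathfrak p^m)^r J^s = (\mathfrak p^m)^{r+s}\cap J^s$ directly on monomials, for all $r,s\ge 0$, following exactly the strategy used for Proposition \ref{Demotion-1}, with the role of ``$\mathfrak q^s$'' now played by ``$J^s$''. First I would record the relevant monomial membership criteria. For a monomial $u=x_1^{a_1}\cdots x_n^{a_n}$ one has $u\in\mathfrak p^k$ if and only if $\sum_{i=1}^m a_i\ge k$; and $u\in J^s$ if and only if $u$ is divisible by one of the generators $x_1^{mb_1}\cdots x_m^{mb_m}$ of $J^s$ with $b_i\ge 0$ and $\sum_{i=1}^m b_i=s$ (equivalently, $\sum_{i=1}^m\lfloor a_i/m\rfloor\ge s$, by Lemma \ref{lem:bounded-sum}). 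Since $x_i^m\in\mathfrak p^m$ for every $i$, we have $J\subseteq\mathfrak p^m$, so the inclusion $(\mathfrak p^m)^rJ^s\subseteq(\mathfrak p^m)^{r+s}\cap J^s$ is automatic, and only the reverse inclusion needs proof.

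For the reverse inclusion I would take a monomial $u=x_1^{a_1}\cdots x_n^{a_n}\in\mathfrak p^{m(r+s)}\cap J^s=(\mathfrak p^m)^{r+s}\cap J^s$. Membership in $J^s$ supplies nonnegative integers $b_1,\dots,b_m$ with $\sum_{i=1}^m b_i=s$ and $mb_i\le a_i$ for all $i$. Set $w:=x_1^{mb_1}\cdots x_m^{mb_m}\in J^s$ and $v:=u/w=x_1^{a_1-mb_1}\cdots x_m^{a_m-mb_m}\,x_{m+1}^{a_{m+1}}\cdots x_n^{a_n}$, which is a genuine monomial because $mb_i\le a_i$. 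The key computation is then
\[
\sum_{i=1}^m (a_i-mb_i)=\sum_{i=1}^m a_i-m\sum_{i=1}^m b_i=\sum_{i=1}^m a_i-ms\ \ge\ m(r+s)-ms=mr,
\]
using $u\in\mathfrak p^{m(r+s)}$; hence $v\in\mathfrak p^{mr}=(\mathfrak p^m)^r$. Therefore $u=vw\in(\mathfrak p^m)^rJ^s$, which completes the nontrivial inclusion and shows that $J$ is a demotion of $\mathfrak p^m$.

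For the ``in particular'' clause, when $m>1$ I would simply exhibit a monomial separating the two ideals: $x_1^{m-1}x_2$ (or $x_1x_2\cdots x_m$) has degree $m$ in the variables $x_1,\dots,x_m$, so it lies in $\mathfrak p^m$, yet it is not divisible by any $x_i^m$ since $m>1$, so it does not lie in $J$. Thus $J\subsetneq\mathfrak p^m$ and $J$ is a proper demotion of $\mathfrak p^m$. I do not expect a genuine obstacle here, as the argument mirrors Proposition \ref{Demotion-1} step for step; the only point requiring care is the exact cancellation of the factor $m$ in the degree count above — writing $\mathfrak p^m$ with that precise exponent is exactly what makes the bookkeeping balance, and splitting $u$ through an honest generator of $J^s$ (rather than re-deriving the split from the floor-sum inequality) keeps this transparent.
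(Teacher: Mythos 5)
Your proof is correct and takes essentially the same approach as the paper's: both split a monomial $u\in(\mathfrak p^m)^{r+s}\cap J^s$ as $u=vw$ with $w$ a generator of $J^s$ and then verify $v\in(\mathfrak p^m)^r$ by the same degree count $\sum(a_i-mb_i)\ge mr$. Your version is marginally more streamlined in that you extract the exponents $b_i$ directly from divisibility by a generator of $J^s$, whereas the paper first passes to the floor-sum criterion and then re-applies Lemma~\ref{lem:bounded-sum} to recover them; the extra explicit witness (e.g.\ $x_1^{m-1}x_2$) for the proper-containment claim is a nice touch but not a substantive difference.
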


\begin{proof}
We should prove  $(\mathfrak{p}^m)^r J^s = (\mathfrak{p}^m)^{r+s} \cap J^s$ for all $r,s\geq 0$. Fix $r,s\geq 0$. Since $(\mathfrak{p}^m)^r J^s \subseteq  (\mathfrak{p}^m)^{r+s} \cap J^s$, it suffices to check the reverse inclusion. Pick a monomial $u\in (\mathfrak{p}^m)^{r+s} \cap J^s$. 
  Write $u =(\prod_{i=1}^mx_i^{\alpha_i})(\prod_{i=m+1}^nx_i^{\alpha_i}),$ and set 
\[
\alpha = (\alpha_1,\dots,\alpha_m), \qquad
|\alpha| = \sum_{i=1}^m \alpha_i, \qquad
U_i = \Big\lfloor \tfrac{\alpha_i}{m} \Big\rfloor \; (1\leq i \leq m).
\]
In the sequel, we will use the following facts. The first fact is obvious. We  justify only the second fact:
\begin{itemize}
    \item $u \in (\mathfrak{p}^m)^t \iff |\alpha| \ge mt$. 
    \item $u \in J^t \iff \sum_{i=1}^m U_i \ge t$.
\end{itemize}

\medskip
Since the monomial  ideal $J=(x_1^m,\dots,x_m^m)$ is generated by the monomials $x_i^m$ $(1\leq i \leq m)$, a typical minimal monomial generator of $J^t$ has the form
 $g = \prod_{i=1}^m x_i^{m c_i},$ where $c_i \ge 0$ are integers with $\sum_{i=1}^m c_i = t$.  
Indeed, such $g$ arises as a product of $t$ minimal monomial generators of $J$, and conversely every product of $t$ minimal monomial generators is of this form.

\smallskip
\noindent ($\Rightarrow$) If $u \in J^t$, then some such $g$ divides $u$.  Divisibility means $\alpha_i \ge m c_i$ for all $1\leq i \leq m$, hence
 $\Big\lfloor \frac{\alpha_i}{m} \Big\rfloor \ge c_i$   for all $1\leq i \leq m$.  Summing over $i$ gives
$$\sum_{i=1}^m \Big\lfloor \frac{\alpha_i}{m} \Big\rfloor \ge \sum_{i=1}^m c_i = t.$$
\noindent ($\Leftarrow$) Conversely, suppose $\sum_{i=1}^m \Big\lfloor \tfrac{\alpha_i}{m} \Big\rfloor \ge t.$
Let $f_i := \lfloor \alpha_i / m \rfloor$ for $i=1, \ldots, m$. 
Because  $\sum_{i=1}^m f_i \ge t$,  Lemma \ref{lem:bounded-sum} permits  us to select  integers $c_1, \ldots, c_m$ such that  $0 \le c_i \le f_i$  for all $1\leq i \leq m$,  and $\sum_{i=1}^m c_i = t$. Then the monomial $g = \prod_{i=1}^m x_i^{m c_i}$ is a generator of $J^t$ and divides $u$, showing $u \in J^t$.

 This gives rise to the fact that $u \in J^t \iff  \sum_{i=1}^m \Big\lfloor \frac{\alpha_i}{m} \Big\rfloor \ge t.$

 It follows now from $u\in (\mathfrak{p}^m)^{r+s} \cap J^s$ and the above facts that  $u$ satisfies both $|\alpha| \ge m(r+s)$ and $\sum_{i=1}^m U_i \geq s.$ 
 Since $\sum_{i=1}^m U_i \ge s$,  Lemma \ref{lem:bounded-sum}  enables us to  choose  integers $c_1, \ldots, c_m$ such that  $0 \le c_i \le U_i$ 
  for all $1\leq i \leq m$,  and $\sum_{i=1}^m c_i = s$. 
 In particular,  $c_i \le U_i=\lfloor \alpha_i/m\rfloor$ gives that  $\alpha_i\geq mc_i$ for all $1\leq i \leq m$. 
Define,    for all $1\leq i \leq m$,  $\gamma_i := mc_i,$  $\beta_i := \alpha_i - \gamma_i,$ and set 
 $$b := \prod_{i=1}^m x_i^{\gamma_i} \in J^s  \;\; \text{and} \;\; a := \prod_{i=1}^m x_i^{\beta_i} \cdot x_{m+1}^{\alpha_{m+1}} \cdots x_n^{\alpha_n}.$$
 Then $u = ab$. Moreover, we have the following 
  $$\sum_{i=1}^m \beta_i = |\alpha| - m\sum_{i=1}^m c_i = |\alpha| - ms \ge m(r+s) - ms = mr.$$
 Hence,   $a \in ({\mathfrak{p}}^m)^r$, and so   $u \in ({\mathfrak{p}}^m)^r J^s$. This implies 
$(\mathfrak{p}^m)^{r+s} \cap J^s \subseteq (\mathfrak{p}^m)^r J^s$, and thus $J$ is a demotion of $\mathfrak{p}^m$.
   Finally, since $J \subsetneq {\mathfrak{p}}^m$ when $m > 1$, this shows that $J$ is a proper demotion of ${\mathfrak{p}}^m$. 
\end{proof}


This section is concluded with the following proposition, which addresses the demotion of principal monomial ideals. 
 
 \begin{proposition} \label{Demotion-2}
Suppose that  $I = (m) \subset R = K[x_1, \dots, x_n]$ is  a principal monomial ideal, where $m = x_1^{a_1} x_2^{a_2} \cdots x_n^{a_n}$.  
Then a monomial ideal $J \subseteq I$ is a \emph{demotion} of $I$ if and only if  $J= (m u_1, \dots, m u_t)$, where $\mathcal{G}(J)=\{mu_1, \dots, m u_t\}$
 with  $\mathrm{supp}(u_i)\cap \mathrm{supp}(m)=\emptyset$ for all $i=1, \ldots, t$.
  In particular, $I$ admits proper demotions whenever $\operatorname{supp}(m) \subsetneq \{1, \dots, n\}$.
\end{proposition}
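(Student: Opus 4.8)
The plan is to translate the demotion equality into explicit comparisons of monomials, exploiting that $I=(m)$ is principal. The basic tool is that for a principal monomial ideal $(f)$ and a monomial ideal $L=(h_1,\dots,h_k)$ one has $(f)\cap L=(\operatorname{lcm}(f,h_1),\dots,\operatorname{lcm}(f,h_k))$ and $(f)L=(fh_1,\dots,fh_k)$. First I would note that, since $J\subseteq(m)$, every element of $\mathcal{G}(J)$ is divisible by $m$, so we may write $\mathcal{G}(J)=\{mu_1,\dots,mu_t\}$ with each $u_i$ uniquely determined (it is that generator divided by $m$). The generators of $J^s$ are then the monomials $m^sw$, $w=u_{i_1}\cdots u_{i_s}$, so that $I^rJ^s=(\{m^{r+s}w\})$ while $I^{r+s}\cap J^s=(\{\operatorname{lcm}(m^{r+s},m^sw)\})=(\{m^s\operatorname{lcm}(m^r,w)\})$. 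Since $\operatorname{lcm}(m^r,w)=m^rw$ precisely when $\operatorname{supp}(m)\cap\operatorname{supp}(w)=\emptyset$, the demotion property is governed entirely by whether the relevant products $w$ have support disjoint from $\operatorname{supp}(m)$; the cases $r=0$ or $s=0$ are trivially equalities, so only $r,s\ge1$ need attention.

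For the ``if'' direction, assume $\operatorname{supp}(u_i)\cap\operatorname{supp}(m)=\emptyset$ for all $i$. Then every product $w=u_{i_1}\cdots u_{i_s}$ also satisfies $\operatorname{supp}(w)\cap\operatorname{supp}(m)=\emptyset$, hence $m^s\operatorname{lcm}(m^r,w)=m^{r+s}w$, and the two ideals $I^{r+s}\cap J^s$ and $I^rJ^s$ have the same generators for all $r,s\ge0$. This is the routine half.

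For the ``only if'' direction I would argue by contraposition. Suppose some minimal generator $g=mv$ of $J$ has $\operatorname{supp}(v)\cap\operatorname{supp}(m)\ne\emptyset$. Set $s=1$, pick $r$ strictly larger than every exponent appearing in $v$, and consider the test monomial $g':=\operatorname{lcm}(m^{r+1},g)=m\cdot\operatorname{lcm}(m^r,v)$. Then $g'\in I^{r+1}\cap J$ because $m^r\mid\operatorname{lcm}(m^r,v)$ and $v\mid\operatorname{lcm}(m^r,v)$. I would then show $g'\notin I^rJ$: if some generator $m^{r+1}v_j$ of $I^rJ$ (with $mv_j\in\mathcal{G}(J)$) divided $g'$, then comparing $x_k$-exponents and using $r>\deg_{x_k}(v)$ would force $\deg_{x_k}(v_j)=0$ for every $x_k\in\operatorname{supp}(m)$ and $v_j\mid v$; hence $mv_j\mid mv=g$, and minimality of $\mathcal{G}(J)$ would give $v_j=v$, contradicting $\operatorname{supp}(v)\cap\operatorname{supp}(m)\ne\emptyset$. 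Therefore $I^{r+1}\cap J\ne I^rJ$ (the case $r,s=1$), so $J$ is not a demotion of $I$.

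The ``in particular'' statement follows immediately: if $x_k\notin\operatorname{supp}(m)$, then $J=(mx_k)$ meets the criterion with $u_1=x_k$ and is properly contained in $I=(m)$, hence is a proper demotion. I expect the only delicate point to be the exponent bookkeeping in the contrapositive step — namely, checking that the large choice of $r$ collapses the maxima in $\operatorname{lcm}(m^r,v)$ exactly so that the variables of $v$ lying in $\operatorname{supp}(m)$ obstruct divisibility by every generator of $I^rJ$. Everything else is formal manipulation with monomial least common multiples.
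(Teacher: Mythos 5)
Your proof is correct, and the ``if'' direction is essentially identical to the paper's: both reduce the claim to the observation that when $\operatorname{supp}(m)$ is disjoint from $\operatorname{supp}(w)$ one has $\operatorname{lcm}(m^{r+s},m^s w)=m^{r+s}w$. The ``only if'' direction, however, takes a genuinely different route. The paper tests the demotion property at $r=s=1$: it picks $x_j\in\operatorname{supp}(u_i)\cap\operatorname{supp}(m)$, chooses among the generators involving $x_j$ one with \emph{minimal} $x_j$-degree, forms $M=\operatorname{lcm}(m^2,mu_i)$, and then runs a two-case exponent comparison (depending on whether $x_j$ divides the candidate $u_k$) to rule out $M\in IJ$. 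You instead keep $s=1$ but let $r$ be large (larger than every exponent of $v$), which ``saturates'' the $x_k\in\operatorname{supp}(m)$-coordinates of $g'=\operatorname{lcm}(m^{r+1},mv)$ at $(r+1)a_k$; this forces any would-be divisor $m^{r+1}v_j$ of $g'$ to have $\deg_{x_k}(v_j)=0$ for all $x_k\in\operatorname{supp}(m)$ and $v_j\mid v$ everywhere else, so minimality of $\mathcal G(J)$ collapses $v_j=v$ and gives the contradiction in one stroke. What each buys: the paper's argument uses the smallest possible exponents ($r=s=1$), which is aesthetically tight, but pays for it with the minimality choice over generators and the two-case analysis (and the case distinction is a little loosely worded in the source --- the correct statement is that some $x_p$ has $\deg_{x_p}(u_k)>\deg_{x_p}(u_i)$, not literally $x_p\mid u_k$ and $x_p\nmid u_i$). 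Your large-$r$ trick avoids both the minimality selection and the casework, at the cost of a less economical exponent; it is arguably the cleaner of the two. Both are sound, and both establish the ``in particular'' clause the same way.
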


\begin{proof}
 ($\Rightarrow$)  Suppose $J \subseteq I$ is a demotion of $I$. Since $I$ is principal, every generator of $J$ is divisible by $m$, so we can write
 $J= (m u_1, \dots, m u_t)$, where $\mathcal{G}(J)=\{mu_1, \dots, m u_t\}$.  We claim that  $\mathrm{supp}(u_i)\cap \mathrm{supp}(m)=\emptyset$ 
for all $i=1, \ldots, t$.  Suppose, on the contrary, that  there exists some $1\leq i \leq t$ such that  $x_j\in \mathrm{supp}(u_i)\cap \mathrm{supp}(m)$
for some $1\leq j \leq n$. Among all such generators that involve $x_j$ in their supports, assume that $\deg_{x_j}u_i$ is minimum. 
By virtue of $J$ is a demotion of $I$, we have $I^{r+s}\cap J^s=I^rJ^s$ for all $r,s\geq 0$. 
Take $r = s = 1$. Then we get $I J = I^2 \cap J.$ Define $M:=\mathrm{lcm}(m^2, m u_i) \in I^2 \cap J.$ Hence, $M\in IJ$. 
This implies that there exists some monomial $m^2u_k\in IJ$, where $1\leq k\leq t$,  such that $m^2u_k\mid M$. As    $x_j\in \mathrm{supp}(u_i)\cap \mathrm{supp}(m)$, we can assume that   $\deg_{x_j}(u_i) = b_j > 0$ and   $a_j>0$.   Also, let  $\deg_{x_j}(u_k) = c_j \geq 0$.  Hence, we may consider the following cases:

\textbf{Case 1.} $c_j >0$. Then the minimality implies that $b_j\leq c_j$, and so we can conclude the  following 
$$\deg_{x_j}(M) = \max\{2a_j, a_j + b_j\} < 2a_j + b_j \leq 2a_j + c_j=\deg_{x_j}(m^2 u_k).$$
This yields that  $M$ is not divisible by $m^2 u_k$, a contradiction.

 \textbf{Case 2.} $c_j = 0$. Hence, $x_j\nmid u_k$. Since $mu_i,mu_k\in \mathcal{G}(J)$ and $u_i\neq u_k$, this implies that there exists some $x_p$ such that 
 $x_p\mid u_k$ but $x_p\nmid u_i$. In particular, $x_p\neq x_j$.  Then we obtain the following 
$$\deg_{x_p}(M) = 2a_p < 2a_p +\deg_{x_p}u_k=\deg_{x_j}(m^2 u_k).$$
This leads  to $m^2u_k\nmid M$, which is a contradiction. 
Consequently, we can conclude that $\mathrm{supp}(u_i)\cap \mathrm{supp}(m)=\emptyset$ for all $i=1, \ldots, t$. 

($\Leftarrow$)  Conversely, let   $\mathcal{G}(J)=\{mu_1, \dots, m u_t\}$ with  $\mathrm{supp}(u_i)\cap \mathrm{supp}(m)=\emptyset$
 for all $i=1, \ldots, t$.   Then any product in $I^r J^s$ has the form $m^{r+s}(u_{i_1} \cdots u_{i_s}).$ Since the variables of $m$ and the $u_i$ 
 are disjoint, for any choice of indices we have  $\operatorname{lcm}\bigl(m^{r+s},\, m^s(u_{i_1} \cdots u_{i_s})\bigr)= m^{r+s}(u_{i_1} \cdots u_{i_s}).$   
Thus, for all $r,s \geq 0$, $I^r J^s = I^{r+s} \cap J^s,$ so $J$ is indeed a demotion of $I$.

Finally, we observe  that if  $\operatorname{supp}(m) = \{1, \dots, n\}$, then no variable is left outside the support of $m$, so no nontrivial $u_i \neq 1$ exists and the only demotion is $J = I$. On the other hand, if $\operatorname{supp}(m) \subsetneq \{1, \dots, n\},$ then there exist variables not appearing in $m$. For any choice of monomials $u_1, \dots, u_t$ in these complementary variables, the ideal $J = (m u_1, \dots, m u_t) \subsetneq I$ is a proper demotion of $I$. 
\end{proof}


\begin{corollary}
any proper monomial ideal in $K[x]$  has no proper demotion ideal.
\end{corollary}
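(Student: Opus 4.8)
The plan is to reduce immediately to the principal case and then invoke Proposition~\ref{Demotion-2}. First I would recall that in the one-variable polynomial ring $R=K[x]$ every monomial ideal is principal: a nonzero monomial ideal is generated by $x^{a}$, where $a$ is the least exponent occurring among its monomial generators, while the zero ideal is $(0)$. Hence a proper monomial ideal $I\subsetneq K[x]$ is either $I=(0)$ or $I=(x^{a})$ for some integer $a\ge 1$.

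Next I would dispose of the trivial case $I=(0)$: the only ideal $J$ with $J\subseteq (0)$ is $J=(0)=I$ itself, so there is no proper demotion. For $I=(x^{a})$ with $a\ge 1$, I would apply Proposition~\ref{Demotion-2} with $n=1$ and $m=x^{a}$. Here $\operatorname{supp}(m)=\{1\}=\{1,\dots,n\}$, so there is no variable outside $\operatorname{supp}(m)$; the characterization in Proposition~\ref{Demotion-2} forces every demotion to have the form $J=(mu_{1},\dots,mu_{t})$ with $\operatorname{supp}(u_{i})\cap\operatorname{supp}(m)=\emptyset$, which compels each $u_{i}=1$ and hence $J=(m)=I$. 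Equivalently, the ``in particular'' clause of that proposition produces proper demotions precisely when $\operatorname{supp}(m)\subsetneq\{1,\dots,n\}$, a condition that fails here. Therefore $I$ has no proper demotion.

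This is essentially a direct corollary of Proposition~\ref{Demotion-2}, so I do not expect a genuine obstacle. The only points requiring a small amount of care are the observation that every monomial ideal of $K[x]$ is principal (so that Proposition~\ref{Demotion-2} is applicable at all) and the separate, trivial treatment of the zero ideal, which is not literally covered by the principal-generator description.
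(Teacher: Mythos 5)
Your proof is correct and follows the same route the paper intends: the corollary is placed immediately after Proposition~\ref{Demotion-2} and is a direct consequence of it, since every nonzero proper monomial ideal of $K[x]$ is principal of the form $(x^{a})$ with $a\ge 1$, and then $\operatorname{supp}(m)=\{1\}=\{1,\dots,n\}$ rules out any nontrivial $u_i$. Your separate handling of the zero ideal is a small extra care that the paper leaves implicit.
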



\section{Demotions of  monomial  ideals under   monomial operations}  \label{Section 5}

In this section, we investigate the behavior of the demotions of monomial ideals under a variety of monomial operations. In particular, we explore how these ideals are affected by operations such as expansion, weighting, monomial multiplication, monomial localization, contraction, and deletion. Building upon these operations, we propose several systematic methods for constructing new monomial ideals whose demotions are determined by those of previously studied monomial ideals. To provide a rigorous foundation for our discussion, we begin by examining the demotions of monomial ideals under the sum of ideals.

\subsection{Demotions of  monomial  ideals  under summation}  $\newline$ 

Proposition \ref{Summation} asserts that, under certain condition, the sum of two monomial ideals that possess demotions also admits a demotion. Before proceeding, we first mention the following auxiliary lemma.

\begin{lemma} \label{LEM.SUM1}
Let $J_1 \subseteq I_1$ (respectively,  $J_2 \subseteq I_2$)  be monomial ideals  in $R=K[x_1, \ldots, x_n]$ such that 
$\mathcal{G}(I_1) \subset R_1=K[x_1, \ldots, x_m]$ and $\mathcal{G}(I_2) \subset R_2=K[x_{m+1}, \ldots, x_n]$ for some $m \geq 1$. 
Let $r,s,c,d\geq 0$ with $c+d=s$. Then 
\[
\Big(\sum_{\alpha+\beta=r+s} I_1^\alpha I_2^\beta\Big)\cap (J_1^cJ_2^d)
=
\sum_{\substack{a+b=r+s\\ a\ge c,\ b\ge d}}
\big(I_1^{a}\cap J_1^c\big)\,\big(I_2^{b}\cap J_2^d\big).
\]
\end{lemma}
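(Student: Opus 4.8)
The plan is to reduce the stated equality to a term-by-term comparison of two sums of products of monomial ideals, exploiting that the pair $I_1,J_1$ and the pair $I_2,J_2$ are supported on disjoint blocks of variables.

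First I would isolate the following elementary \emph{separation property}: if $C,C'$ are monomial ideals with $\mathcal{G}(C),\mathcal{G}(C')\subset R_1=K[x_1,\dots,x_m]$ and $D,D'$ are monomial ideals with $\mathcal{G}(D),\mathcal{G}(D')\subset R_2=K[x_{m+1},\dots,x_n]$, then
\[
CD\cap C'D'=(C\cap C')(D\cap D').
\]
This is checked on monomials: writing a monomial $w$ uniquely as $w=w_1w_2$ with $\mathrm{supp}(w_1)\subseteq\{x_1,\dots,x_m\}$ and $\mathrm{supp}(w_2)\subseteq\{x_{m+1},\dots,x_n\}$, the description $\mathcal{G}(CD)=\{fg:f\in\mathcal{G}(C),\ g\in\mathcal{G}(D)\}$ together with the disjointness of the two variable blocks gives $w\in CD\iff w_1\in C$ and $w_2\in D$, and likewise for $C'D'$ and for $(C\cap C')(D\cap D')$; comparing these membership conditions yields the identity. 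Since each power $I_1^{\alpha}$ and $J_1^{c}$ is generated in $R_1$ and each $I_2^{\beta}$ and $J_2^{d}$ in $R_2$, this gives
\[
I_1^{\alpha}I_2^{\beta}\cap J_1^{c}J_2^{d}=\big(I_1^{\alpha}\cap J_1^{c}\big)\big(I_2^{\beta}\cap J_2^{d}\big)\qquad(\alpha,\beta\ge 0).
\]

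Next, using distributivity of intersection over sums of monomial ideals (Fact~\ref{fact1}(i), applied repeatedly) together with the previous identity, I would rewrite the left-hand side of the lemma as
\[
L:=\Big(\sum_{\alpha+\beta=r+s}I_1^{\alpha}I_2^{\beta}\Big)\cap\big(J_1^{c}J_2^{d}\big)=\sum_{\alpha+\beta=r+s}\big(I_1^{\alpha}\cap J_1^{c}\big)\big(I_2^{\beta}\cap J_2^{d}\big).
\]
The right-hand side of the lemma is precisely the sub-sum of $L$ over the pairs with $\alpha\ge c$ and $\beta\ge d$, so the inclusion ``$\supseteq$'' is immediate, and it remains to prove that, for every pair $(\alpha,\beta)$ with $\alpha+\beta=r+s$, the summand $(I_1^{\alpha}\cap J_1^{c})(I_2^{\beta}\cap J_2^{d})$ lies in the right-hand side. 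I would do this by three (clearly exhaustive) cases. If $\alpha\ge c$ and $\beta\ge d$, the summand already appears on the right. If $\alpha<c$, then $c+d=s$ forces $\beta=(r+s)-\alpha>(r+s)-c=r+d$, so $I_2^{\beta}\cap J_2^{d}\subseteq I_2^{\,r+d}\cap J_2^{d}$; together with the trivial inclusion $I_1^{\alpha}\cap J_1^{c}\subseteq J_1^{c}$ this shows the summand is contained in $J_1^{c}(I_2^{\,r+d}\cap J_2^{d})=(I_1^{c}\cap J_1^{c})(I_2^{\,r+d}\cap J_2^{d})$ (using $J_1^{c}\subseteq I_1^{c}$), which is an admissible term of the right-hand side since $r+d\ge d$ and $c+(r+d)=r+s$. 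The case $\beta<d$ is symmetric: then $\alpha>r+c$, so $I_1^{\alpha}\cap J_1^{c}\subseteq I_1^{\,r+c}\cap J_1^{c}$, and the summand lies in $(I_1^{\,r+c}\cap J_1^{c})J_2^{d}=(I_1^{\,r+c}\cap J_1^{c})(I_2^{d}\cap J_2^{d})$. Hence $L$ is contained in the right-hand side, and the two ideals coincide.

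The only real subtlety is the separation property itself, together with the bookkeeping using $I^{j}\subseteq I^{k}$ for $j\ge k$ and the ranges of admissible indices; once these are in hand, everything else is a formal manipulation of sums and intersections of monomial ideals and needs no further input.
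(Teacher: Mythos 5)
Your proof is correct and follows essentially the same route as the paper's: both rest on the unique factorization of a monomial across the two disjoint blocks of variables and on the same case analysis on whether $\alpha\ge c$ and $\beta\ge d$, using $J_1\subseteq I_1$ to inflate the $R_1$-factor (and symmetrically) when one of those inequalities fails. The only difference is stylistic: you isolate the block-splitting as an explicit separation identity $CD\cap C'D'=(C\cap C')(D\cap C')$ -- more precisely $CD\cap C'D'=(C\cap C')(D\cap D')$ -- and then argue with ideal identities via Fact~\ref{fact1}(i), whereas the paper applies the same observation directly to individual monomials $f=f_1f_2$.
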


\begin{proof}
Our strategy is to  show that each side is contained in the other.

\subsubsection*{Claim 1: The right-hand side is contained in the left-hand side.}

Let \( u \) be a monomial in  the right-hand side. Then for some \( a, b \) with \( a + b = r + s \), \( a \geq c \), \( b \geq d \), we have 
$u \in (I_1^a \cap J_1^c)(I_2^b \cap J_2^d)$. So \( u= u_1 u_2 \) with \( u_1 \in I_1^a \cap J_1^c \) and \( u_2 \in I_2^b \cap J_2^d \). Clearly, \( u \in I_1^a I_2^b \subseteq \sum_{\alpha + \beta = r + s} I_1^\alpha I_2^\beta \). Also, \( u_1 \in J_1^c \) and \( u_2 \in J_2^d \) imply that  \( u \in J_1^c J_2^d \). Therefore, \( u \) is in the left-hand side.

\subsubsection*{Claim 2:  The left-hand side is contained in the right-hand side.}

Let \( f \) be a monomial in  the left-hand side. Then \( f \in J_1^c J_2^d \) and \( f \in \sum_{\alpha + \beta = r + s} I_1^\alpha I_2^\beta \). 
Since the variables of \( R_1 \) and \( R_2 \) are disjoint, any monomial \( f \) can be uniquely written as \( f = f_1 f_2 \) with \( f_1 \in R_1 \) and \( f_2 \in R_2 \). Because \( f \in J_1^c J_2^d \), we have \( f_1 \in J_1^c \) and \( f_2 \in J_2^d \). Also, since \( f \in \sum_{\alpha + \beta = r + s} I_1^\alpha I_2^\beta \), there exist \( \alpha, \beta \) with \( \alpha + \beta = r + s \) such that \( f \in I_1^\alpha I_2^\beta \). Hence, \( f_1 \in I_1^\alpha \) and \( f_2 \in I_2^\beta \). 
Therefore, $f_1 \in I_1^\alpha \cap J_1^c$ and $f_2 \in I_2^\beta \cap J_2^d$. 
If \( \alpha \geq c \) and \( \beta \geq d \), then directly \( f \in (I_1^\alpha \cap J_1^c)(I_2^\beta \cap J_2^d) \), which is a term in the right-hand sum.

Now suppose \( \alpha < c \) or \( \beta < d \). Without loss of generality, assume \( \alpha < c \). Clearly,   $f \in (I_1^\alpha \cap J_1^c)(I_2^\beta \cap J_2^d) \subseteq J_1^c J_2^d.$ 
Since \( \alpha + \beta = r + s \) and \( \alpha < c \), we have \( \beta = r + s - \alpha > r + s - c \). As \( c + d \leq r + s \), it follows that \( r + s - c \geq d \), so \( \beta \geq d + 1 \geq d \). Now choose \( a:= c \) and \( b:= r + s - c \). Note that \( b \geq d \). Then, 
$f_1 \in I_1^\alpha \cap J_1^c \subseteq J_1^c = I_1^c \cap J_1^c$ and $f_2 \in I_2^\beta \cap J_2^d \subseteq I_2^\beta \subseteq I_2^{r + s - c} = I_2^b$ 
by  \( \beta >b\). Also, \(f_2 \in J_2^d \), so \( f_2 \in I_2^b \cap J_2^d \). Therefore, we obtain 
\[
f=f_1f_2 \in (I_1^c \cap J_1^c)(I_2^b \cap J_2^d),
\]
which is a term in the right-hand sum  for \( a = c \)  and  \( b = r + s - c \). 

The case \( \beta < d \) is symmetric. Thus, every element of the left-hand side is contained in the right-hand side. This finishes the proof. 
\end{proof}


Recall that if $I$ and $J$ are two monomial ideals in $R=K[x_1, \ldots, x_n]$ such that $\mathrm{supp}(I) \cap \mathrm{supp}(J)= \emptyset$, then 
 $I\cap J=IJ$. 

\begin{proposition} \label{Summation}
 Let $J_1 \subseteq I_1$ (respectively,  $J_2 \subseteq I_2$)  be monomial ideals  in $R=K[x_1, \ldots, x_n]$ such that 
$\mathcal{G}(I_1) \subset R_1=K[x_1, \ldots, x_m]$ and $\mathcal{G}(I_2) \subset R_2=K[x_{m+1}, \ldots, x_n]$ for some $m \geq 1$. 
 If  $J_1$ is  a demotion of $I_1$ (respectively,  $J_2$ is  a demotion of $I_2$), then   $J_1+J_2$ is  a demotion of $I_1+I_2$. 
\end{proposition}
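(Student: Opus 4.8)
The goal is to establish $(I_1+I_2)^r(J_1+J_2)^s=(I_1+I_2)^{r+s}\cap (J_1+J_2)^s$ for all $r,s\ge 0$. Since the nontrivial containment is the ``$\supseteq$'' one, the plan is to expand both sides using the disjointness of the variable sets. Because $\mathrm{supp}(I_1)\cap\mathrm{supp}(I_2)=\emptyset$, the binomial-type expansion gives $(I_1+I_2)^k=\sum_{\alpha+\beta=k}I_1^\alpha I_2^\beta$, and likewise $(J_1+J_2)^s=\sum_{c+d=s}J_1^cJ_2^d$. Multiplying these out, the left-hand side becomes
\[
(I_1+I_2)^r(J_1+J_2)^s=\sum_{\substack{\alpha+\beta=r+s\\ \alpha\ge c,\ \beta\ge d\\ c+d=s}} I_1^{\alpha-c}J_1^{c}\,I_2^{\beta-d}J_2^{d},
\]
where I will need to be slightly careful: a term $I_1^aI_2^b\cdot J_1^cJ_2^d$ with $a+b=r$, $c+d=s$ regroups (using disjoint supports to commute the $R_1$- and $R_2$-factors) as $(I_1^aJ_1^c)(I_2^bJ_2^d)$, and then the demotion hypotheses $I_1^aJ_1^c=I_1^{a+c}\cap J_1^c$ and $I_2^bJ_2^d=I_2^{b+d}\cap J_2^d$ apply directly.

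The key step is then to recognize the right-hand side through Lemma \ref{LEM.SUM1}. First apply Fact \ref{fact1}(i) to distribute the intersection over the sum $(J_1+J_2)^s=\sum_{c+d=s}J_1^cJ_2^d$, reducing to showing
\[
(I_1+I_2)^{r+s}\cap (J_1^cJ_2^d)=\Big(\sum_{\alpha+\beta=r+s}I_1^\alpha I_2^\beta\Big)\cap(J_1^cJ_2^d)
\]
is contained in the corresponding piece of the left-hand side, for each fixed $c+d=s$. But Lemma \ref{LEM.SUM1} evaluates exactly this intersection as $\sum_{a+b=r+s,\ a\ge c,\ b\ge d}(I_1^a\cap J_1^c)(I_2^b\cap J_2^d)$. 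Using the demotion hypotheses in the form $I_1^a\cap J_1^c=I_1^{a-c}J_1^c$ (valid since $a\ge c$, with exponents $a-c\ge 0$ and $c\ge 0$) and $I_2^b\cap J_2^d=I_2^{b-d}J_2^d$, each summand becomes $I_1^{a-c}J_1^c\,I_2^{b-d}J_2^d$; regrouping by disjointness of supports this is $(I_1^{a-c}I_2^{b-d})(J_1^cJ_2^d)$ with $(a-c)+(b-d)=r$ and $c+d=s$, hence a summand of $(I_1+I_2)^r(J_1+J_2)^s$. Summing over all admissible $a,b,c,d$ gives the desired containment, and combined with the trivial ``$\subseteq$'' this proves the equality.

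The main obstacle is purely bookkeeping: making sure the multi-index expansions of $(I_1+I_2)^k$ and $(J_1+J_2)^s$ are handled correctly (these hold because $I_1I_2=I_1\cap I_2$ and more generally monomials in $R_1$ and $R_2$ multiply freely), and that the regrouping of products such as $I_1^aJ_1^c\cdot I_2^bJ_2^d$ into $(I_1^aJ_1^c)(I_2^bJ_2^d)$ is justified by the disjoint-support remark stated just before the proposition. No single inclusion is deep; the care lies in matching indices so that Lemma \ref{LEM.SUM1} plugs in cleanly and the demotion equalities are applied only with nonnegative exponents.
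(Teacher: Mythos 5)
Your proposal is correct and follows essentially the same route as the paper's own proof: expand both sides using the disjointness of the variable sets, invoke Lemma~\ref{LEM.SUM1} to evaluate $(I_1+I_2)^{r+s}\cap(J_1^cJ_2^d)$, apply the two demotion hypotheses to convert each factor $I_i^a\cap J_i^c$ into a product, and reindex to recognize a summand of $(I_1+I_2)^r(J_1+J_2)^s$. The only cosmetic difference is that you argue one containment (the other being trivial) whereas the paper computes both sides and compares, but the key lemma, the use of Fact~\ref{fact1}(i) for distributivity, and the index bookkeeping are identical.
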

\begin{proof}
Assume that   $J_1$ is  a demotion of $I_1$ (respectively,  $J_2$ is  a demotion of $I_2$). For simplicity of notation, let us define $J := J_1 + J_2$ 
and $I := I_1 + I_2$.  Fix $r,s\geq 0$.  Since the variables in \(R_1\) and \(R_2\) are disjoint, the polynomial
 ring decomposes as a tensor product $R = R_1 \otimes_K R_2.$ It is obvious that $J \subseteq I$. 
 Moreover, based on the binomial expansion theorem,  it follows that
\[
I^r = \sum_{a+b=r} I_1^a I_2^b \quad \text{and}  \quad J^s = \sum_{c+d=s} J_1^c J_2^d,
\]
where multiplication is taken inside \(R\).  By distributivity, we have 
\[
I^r J^s = \left( \sum_{a+b=r} I_1^a I_2^b \right) \left( \sum_{c+d=s} J_1^c J_2^d \right) = \sum_{\substack{a+b=r, \; c+d=s}} I_1^a I_2^b J_1^c J_2^d.
\]

Because the variables of \(R_1\) and \(R_2\) are disjoint, these factors commute and can be grouped as $I_1^a J_1^c \cdot I_2^b J_2^d \subseteq R_1 \otimes_K R_2.$
Due to  \(J_1\) is a demotion of \(I_1\), we get $I_1^a J_1^c = I_1^{a+c} \cap J_1^c$ for all \(a, c \geq 0\). Similarly, since \(J_2\) is a demotion of \(I_2\), we have
$I_2^b J_2^d = I_2^{b+d} \cap J_2^d$ for all \(b, d \geq 0\). Substituting these back,  
\[
I^r J^s = \sum_{\substack{a+b=r, \; c+d=s}} (I_1^{a+c} \cap J_1^c)(I_2^{b+d} \cap J_2^d).
\]

By virtue of  the variables are disjoint, we can readily deduce that 
 \begin{align*}
  (I_1^{a+c} I_2^{b+d}) \cap (J_1^c J_2^d) &=   (I_1^{a+c} \cap  I_2^{b+d}) \cap (J_1^c \cap  J_2^d)\\
 &= (I_1^{a+c} \cap J_1^c) \cap ( I_2^{b+d} \cap  J_2^d)\\
 &= (I_1^{a+c} \cap J_1^c) ( I_2^{b+d} \cap  J_2^d).
 \end{align*}
Thus, we can conclude the following equality 
\[
I^r J^s = \sum_{\substack{a+b=r,\; c+d=s}} (I_1^{a+c} I_2^{b+d}) \cap (J_1^c J_2^d).
\]

 It follows now from Lemma \ref{LEM.SUM1} that 
\[
\begin{aligned}
I^{r+s}\cap J^s
&=\Big(\sum_{\alpha+\beta=r+s} I_1^\alpha I_2^\beta\Big)\cap
\Big(\sum_{c+d=s} J_1^cJ_2^d\Big) \\
&= \sum_{c+d=s}
\Big[\Big(\sum_{\alpha+\beta=r+s} I_1^\alpha I_2^\beta\Big)\cap (J_1^cJ_2^d)\Big] \\
&= \sum_{\substack{c+d=s,\; a+b=r+s\\ a\ge c,\ b\ge d}}
\big(I_1^{a}\cap J_1^c\big)\,\big(I_2^{b}\cap J_2^d\big).
\end{aligned}
\]
On account of  $J_1$ is a demotion of $I_1$ and $J_2$ is a demotion of $I_2$, for $a\ge c$ and $b\ge d$,  we have
$I_1^{a}\cap J_1^c = I_1^{a-c}J_1^c$ and $I_2^{b}\cap J_2^d = I_2^{b-d}J_2^d$. 
Thus, we obtain
\[
I^{r+s}\cap J^s = \sum_{\substack{c+d=s,\; a+b=r+s\\ a\ge c,\ b\ge d}}
I_1^{a-c}J_1^c \; I_2^{b-d}J_2^d.
\]
Let $a':=a-c$ and $b':=b-d$. Then $a'+b'=(a+b)-(c+d)=(r+s)-s=r$, and so we get the following equalities 
\[
I^{r+s}\cap J^s = \sum_{\substack{a'+b'=r, \; c+d=s}}
I_1^{a'}I_2^{b'}  J_1^cJ_2^d
=\Big(\sum_{a'+b'=r} I_1^{a'}I_2^{b'}\Big)
\Big(\sum_{c+d=s} J_1^cJ_2^d\Big)
= I^rJ^s.
\]
We can at once  conclude that   \(J = J_1 + J_2\) is a demotion of \(I = I_1 + I_2\), as claimed. 
\end{proof}
 

\subsection{Demotions of  monomial  ideals  under monomial localization, contraction,  and   deletion}  $\newline$ 

We now  focus on the notion of monomial localization. Assume that  $I \subset R = K[x_1, \ldots, x_n]$ is  a monomial ideal, where $K$ is a field. 
We denote by $V^*(I)$ the collection of monomial prime ideals that contain $I$. Suppose  that  $\mathfrak{p} = (x_{i_1}, \ldots, x_{i_r})$ is a monomial prime ideal. 
The \emph{monomial localization} of $I$ with respect to $\mathfrak{p}$, denoted by $I(\mathfrak{p})$, is defined as the ideal of the polynomial ring
 $R(\mathfrak{p}) = K[x_{i_1}, \ldots, x_{i_r}],$ obtained from $I$ via the $K$-algebra homomorphism
\[
R \longrightarrow R(\mathfrak{p}), \qquad x_j \mapsto 1 \quad \text{for all } x_j \notin \{x_{i_1}, \ldots, x_{i_r}\}.
\]

\bigskip
The following lemma will be needed in order to prove Proposition \ref{PRO.Localization}. For the sake of clarity and ease of reference, we state it here.
 
  \begin{lemma}\label{LEM.Localization} (\cite[Lemma 3.13]{SN})
Let $I$ and $J$ be two monomial ideals   in $R=K[x_1, \ldots, x_n]$, and $\mathfrak{p}$ be a monomial prime ideal of $R$. Then the following 
statements hold.
\begin {itemize}
\item[(i)]  $(I+J)(\mathfrak{p})= I(\mathfrak{p}) + J(\mathfrak{p})$;
\item[(ii)] $(IJ)(\mathfrak{p})= I(\mathfrak{p})  J(\mathfrak{p})$;
\item[(iii)] $(I\cap J)(\mathfrak{p})= I(\mathfrak{p}) \cap  J(\mathfrak{p})$;
\item[(iv)] $(I :_RJ)(\mathfrak{p})= (I(\mathfrak{p}) :_{R(\mathfrak{p})}  J(\mathfrak{p}))$;
\item[(v)]  If $Q$ is a $\mathfrak{q}$-primary monomial ideal in $R$ with $ \mathfrak{q}\subseteq \mathfrak{p}$, then $Q(\mathfrak{p})$ is a $\mathfrak{q}$-primary monomial ideal in $R(\mathfrak{p})$.
\end{itemize}
 \end{lemma}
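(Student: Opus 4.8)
The plan is to verify each of the five statements directly on monomials, exploiting the combinatorial description of monomial ideals in terms of exponent vectors. Recall that a monomial $x^{\mathbf a}$ lies in a monomial ideal $L$ precisely when some $x^{\mathbf b}\in\mathcal G(L)$ divides $x^{\mathbf a}$, i.e. $\mathbf b\le\mathbf a$ componentwise; moreover the monomial localization map $\pi_{\mathfrak p}\colon R\to R(\mathfrak p)$ simply deletes from each exponent vector the coordinates outside $\{i_1,\dots,i_r\}$ (equivalently sets $x_j\mapsto 1$ for $x_j\notin\mathfrak p$), and $\mathcal G(L(\mathfrak p))$ is obtained from $\mathcal G(L)$ by applying $\pi_{\mathfrak p}$ to each generator and then discarding non-minimal monomials.

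First I would settle (i): since $\mathcal G(I+J)=\mathcal G(I)\cup\mathcal G(J)$ up to removing redundancies, applying $\pi_{\mathfrak p}$ to this generating set and then taking minimal generators gives exactly the union of the (minimal) generating sets of $I(\mathfrak p)$ and $J(\mathfrak p)$; this is immediate because $\pi_{\mathfrak p}$ is a ring homomorphism, so $(I+J)(\mathfrak p)=\pi_{\mathfrak p}(I+J)R(\mathfrak p)=(\pi_{\mathfrak p}(I)+\pi_{\mathfrak p}(J))R(\mathfrak p)=I(\mathfrak p)+J(\mathfrak p)$. Statement (ii) is likewise formal: $\pi_{\mathfrak p}$ being a ring homomorphism sends products of generators to products of their images, so $\pi_{\mathfrak p}(IJ)R(\mathfrak p)=\pi_{\mathfrak p}(I)\pi_{\mathfrak p}(J)R(\mathfrak p)=I(\mathfrak p)J(\mathfrak p)$. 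For (iii) I would argue on monomials: $x^{\mathbf a}\in(I\cap J)(\mathfrak p)$ iff there is a monomial $x^{\mathbf c}\in I\cap J$ with $\pi_{\mathfrak p}(x^{\mathbf c})\mid x^{\mathbf a}$; conversely, given $x^{\mathbf a}\in I(\mathfrak p)\cap J(\mathfrak p)$, lift it back to $R$ by choosing exponents on the deleted variables large enough to be divisible by appropriate generators of $I$ and of $J$ simultaneously — since $\mathrm{lcm}$ of two monomials in $I\cap J$ again lies in $I\cap J$, one produces a witness monomial whose image divides $x^{\mathbf a}$. This is the one place a small argument (rather than pure functoriality) is needed, but it is routine.

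For (iv) I would again work with monomials and use the standard colon formula: for monomial ideals $(I:_R J)=\bigcap_{v\in\mathcal G(J)}(I:_R v)$ and $(I:_R x^{\mathbf b})$ has generators $x^{\mathbf a}/\gcd(x^{\mathbf a},x^{\mathbf b})$ over $x^{\mathbf a}\in\mathcal G(I)$. Since $\pi_{\mathfrak p}$ commutes with $\gcd$ and $\mathrm{lcm}$ on the surviving coordinates and kills the others, and since (iii) already gives compatibility with intersection, one reduces to the principal colon $(I:_R v)$ and checks $\pi_{\mathfrak p}\bigl(x^{\mathbf a}/\gcd(x^{\mathbf a},x^{\mathbf b})\bigr)=\pi_{\mathfrak p}(x^{\mathbf a})/\gcd(\pi_{\mathfrak p}(x^{\mathbf a}),\pi_{\mathfrak p}(x^{\mathbf b}))$ coordinatewise, which holds because on each coordinate $\min$ and subtraction commute with ``delete or keep.'' Finally, for (v), if $Q$ is $\mathfrak q$-primary with $\mathfrak q=(x_{j_1},\dots,x_{j_t})\subseteq\mathfrak p$, then $\mathcal G(Q)$ involves only the variables in $\mathrm{supp}(Q)\subseteq\mathfrak q$ (by the structure of irreducible monomial primary ideals one has $\sqrt Q=\mathfrak q$), so $\pi_{\mathfrak p}$ fixes all variables occurring in $\mathcal G(Q)$; hence $Q(\mathfrak p)=QR(\mathfrak p)$ has the same generators, whence $\sqrt{Q(\mathfrak p)}=\mathfrak q R(\mathfrak p)$, and primariness is inherited because $R(\mathfrak p)/Q(\mathfrak p)\cong (R/Q)[x_i: i\notin\mathrm{supp}(Q),\, x_i\in\mathfrak p]$ is a polynomial extension of the primary quotient $R/Q$, which stays primary. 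The main obstacle, such as it is, is purely bookkeeping in (iii) and (iv): making the ``lift the exponent vector back'' construction precise and checking that $\gcd$/$\mathrm{lcm}$ genuinely commute with the deletion of coordinates; everything else is formal from $\pi_{\mathfrak p}$ being a surjective $K$-algebra homomorphism.
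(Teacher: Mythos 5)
The paper does not prove this lemma; it is quoted verbatim as \cite[Lemma 3.13]{SN}, so there is no in-text argument to compare against. Evaluated on its own, your proposal is essentially correct and follows what one would expect: parts (i), (ii) are formal from $\pi_{\mathfrak p}$ being a $K$-algebra homomorphism; (iii) needs the observation that for witnesses $u\in I$, $v\in J$ whose images divide a given monomial, the monomial $\operatorname{lcm}(u,v)$ lies in $I\cap J$ and $\pi_{\mathfrak p}(\operatorname{lcm}(u,v))=\operatorname{lcm}(\pi_{\mathfrak p}(u),\pi_{\mathfrak p}(v))$ still divides it (your wording is a little garbled but the idea is exactly this); (iv) reduces, via (iii) and the standard description $(I:J)=\bigcap_{v\in\mathcal G(J)}(I:v)$, to checking $\pi_{\mathfrak p}$ commutes with $\gcd$ and with dividing off a monomial factor, which is coordinatewise obvious.

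One small imprecision in (v): the isomorphism you write, $R(\mathfrak p)/Q(\mathfrak p)\cong(R/Q)[x_i: i\notin\mathrm{supp}(Q),\ x_i\in\mathfrak p]$, is not correct, because $R/Q$ still contains all the variables outside $\mathfrak p$, whereas $R(\mathfrak p)/Q(\mathfrak p)$ does not. The clean statement is: since $\mathrm{supp}(Q)\subseteq\mathfrak q\subseteq\mathfrak p$, one has $R(\mathfrak p)/Q(\mathfrak p)\cong\bigl(S/Q_S\bigr)[\,x_i: x_i\in\mathfrak p\setminus\mathfrak q\,]$ where $S=K[x_j:x_j\in\mathfrak q]$ and $Q_S\subset S$ is generated by $\mathcal G(Q)$. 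Here $Q_S$ is primary to the maximal ideal of $S$, and a polynomial extension of a ring whose zero-divisors are nilpotent again has only nilpotent zero-divisors (McCoy), so $Q(\mathfrak p)$ is $\mathfrak q R(\mathfrak p)$-primary. Also worth spelling out, since you lean on it: the inclusion $\mathrm{supp}(Q)\subseteq\mathfrak q$ for a $\mathfrak q$-primary monomial ideal $Q$ (not only for irreducible ones) follows directly from primariness — if a minimal generator $u$ of $Q$ were divisible by some $x_k\notin\mathfrak q$, write $u=x_k^e u'$ with $x_k\nmid u'$; then $u'\notin Q$ yet $x_k^e u'\in Q$ forces $x_k\in\sqrt Q=\mathfrak q$, a contradiction. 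With these repairs the argument is complete.
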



 The following proposition  asserts that if a monomial ideal has the demotion property, then any of its monomial localizations with respect to a monomial prime ideal also possesses this property.

\begin{proposition}\label{PRO.Localization}
Let  $J \subseteq I$ be monomial ideals in  $R=K[x_1, \ldots, x_n]$, and $\mathfrak{p}\in V^*(I)$. If  $J$ is a demotion of $I$, then  $J(\mathfrak{p})$ 
is a demotion of   $I(\mathfrak{p})$. 
\end{proposition}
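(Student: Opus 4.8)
The plan is to reduce the statement directly to the monomial localization identities collected in Lemma~\ref{LEM.Localization}, since monomial localization is a $K$-algebra homomorphism that behaves well with respect to all the ideal operations appearing in the definition of a demotion. First I would fix $r,s\ge 0$ and start from the hypothesis that $J$ is a demotion of $I$, i.e.\ $I^{r+s}\cap J^s = I^rJ^s$. The goal is to show $I(\mathfrak{p})^{r+s}\cap J(\mathfrak{p})^s = I(\mathfrak{p})^r J(\mathfrak{p})^s$. The containment $J(\mathfrak{p})\subseteq I(\mathfrak{p})$ is immediate because localization is monotone (it is induced by a ring homomorphism), and the trivial inclusion $I(\mathfrak{p})^rJ(\mathfrak{p})^s\subseteq I(\mathfrak{p})^{r+s}\cap J(\mathfrak{p})^s$ holds for free by the same argument used in the introduction.

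Next I would apply the monomial localization operation to both sides of $I^{r+s}\cap J^s = I^rJ^s$. Using part~(ii) of Lemma~\ref{LEM.Localization} repeatedly (and an easy induction, or simply noting that powers are iterated products of the same ideal), one gets $(I^k)(\mathfrak{p}) = I(\mathfrak{p})^k$ and $(J^k)(\mathfrak{p}) = J(\mathfrak{p})^k$ for every $k\ge 0$; likewise $(I^rJ^s)(\mathfrak{p}) = I(\mathfrak{p})^r J(\mathfrak{p})^s$. Using part~(iii) one gets $(I^{r+s}\cap J^s)(\mathfrak{p}) = I(\mathfrak{p})^{r+s}\cap J(\mathfrak{p})^s$. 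Applying the operation $(-)(\mathfrak{p})$ to the equation $I^{r+s}\cap J^s = I^rJ^s$ therefore yields exactly
\[
I(\mathfrak{p})^{r+s}\cap J(\mathfrak{p})^s = I(\mathfrak{p})^r J(\mathfrak{p})^s .
\]
Since $r,s\ge 0$ were arbitrary, this says precisely that $J(\mathfrak{p})$ is a demotion of $I(\mathfrak{p})$, completing the proof.

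I do not expect any genuine obstacle here; the work is entirely bookkeeping. The one point worth spelling out carefully is the compatibility of monomial localization with powers, $(I^k)(\mathfrak{p}) = I(\mathfrak{p})^k$, which is not literally one of the items in Lemma~\ref{LEM.Localization} but follows from item~(ii) by induction on $k$ (with the base case $k=0$ giving the unit ideal on both sides, and $k=1$ trivial). One should also make sure to invoke the hypothesis $\mathfrak{p}\in V^*(I)$ where it is actually needed — it guarantees $\mathfrak{p}$ genuinely contains $I$ so the localization $I(\mathfrak{p})$ is a proper ideal, but the equality of ideals above is purely formal and does not depend on it. If one prefers, the whole argument can be phrased as: ``$(-)(\mathfrak{p})$ is a surjective $K$-algebra homomorphism, hence commutes with finite sums, finite products, finite intersections of monomial ideals, and therefore with the demotion identity'' — but explicitly citing Lemma~\ref{LEM.Localization}(ii),(iii) is the cleanest route given what is available in the paper.
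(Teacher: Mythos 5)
Your proof is correct and follows essentially the same route as the paper's: apply monomial localization to the demotion identity and invoke Lemma~\ref{LEM.Localization}(ii),(iii) to commute $(-)(\mathfrak{p})$ with products (hence powers) and intersections. The only cosmetic difference is that the paper derives the containment $J(\mathfrak{p})\subseteq I(\mathfrak{p})$ from part~(i) of the lemma, whereas you appeal directly to monotonicity of the homomorphism; both are fine.
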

\begin{proof}
Suppose that $J$ is a demotion of $I$,   and fix $r,s\geq 0$. Then we have $I^rJ^s=I^{r+s} \cap J^s$, and so 
  $(I^rJ^s)(\mathfrak{p})=(I^{r+s} \cap J^s)(\mathfrak{p})$. It follows from  Lemma \ref{LEM.Localization}(i) that  $J(\mathfrak{p}) \subseteq I(\mathfrak{p})$. 
   We can now derive from parts (ii) and (iii) of  Lemma \ref{LEM.Localization}  that 
 $I(\mathfrak{p})^rJ(\mathfrak{p})^s=I(\mathfrak{p})^{r+s} \cap J(\mathfrak{p})^s$. We therefore obtain  $J(\mathfrak{p})$ 
is a demotion of   $I(\mathfrak{p})$, as desired. 
 \end{proof}


To state the forthcoming result, we first recall the definition of the contraction operation. Let $I$ be a monomial ideal in $R = K[x_1, \ldots, x_n]$ with minimal generating set $\mathcal{G}(I) =\{u_1, \ldots, u_m\}$. For some index $1 \leq j \leq n$, the \emph{contraction} of $x_j$ from $I$, denoted by $I / x_j$, is obtained by substituting $x_j = 1$ in each generator $u_i$, for $i = 1, \ldots, m$.
Since the contraction $I / x_j$ coincides with the monomial localization of $I$ with respect to $\mathfrak{p} = \mathfrak{m} \setminus \{x_j\}$, where $\mathfrak{m} = (x_1, \ldots, x_n)$ denotes the graded maximal ideal of $R$, Proposition \ref{PRO.Localization} allows us to immediately deduce the following result.
 
 \begin{corollary}\label{Cor. contraction}
 Let  $J \subseteq I$ be monomial ideals in  $R=K[x_1, \ldots, x_n]$, and     $1\leq j \leq n$.  If  $J$ is a demotion of $I$, then  $J/x_j$ 
is a demotion of    $I/x_j$. 
 \end{corollary}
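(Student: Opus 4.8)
\textbf{Proof plan for Corollary \ref{Cor. contraction}.}

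The plan is to observe that the contraction operation $I \mapsto I/x_j$ is nothing other than a monomial localization, so that the statement will follow instantly from Proposition \ref{PRO.Localization} once the setup is verified. Concretely, let $\mathfrak{m} = (x_1, \ldots, x_n)$ be the graded maximal ideal of $R$ and set $\mathfrak{p} := \mathfrak{m} \setminus \{x_j\} = (x_1, \ldots, x_{j-1}, x_{j+1}, \ldots, x_n)$. The first step is to record that $\mathfrak{p}$ is a monomial prime ideal of $R$ and that $R(\mathfrak{p}) = K[x_1, \ldots, x_{j-1}, x_{j+1}, \ldots, x_n]$; moreover the defining $K$-algebra homomorphism $R \to R(\mathfrak{p})$ sends $x_j \mapsto 1$ and fixes every other variable, which is exactly the substitution $x_j = 1$ used to define the contraction. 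Hence $I(\mathfrak{p}) = I/x_j$ and $J(\mathfrak{p}) = J/x_j$, and from $J \subseteq I$ we get $J/x_j \subseteq I/x_j$.

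The second step is to check that we are entitled to invoke Proposition \ref{PRO.Localization}, whose hypothesis requires $\mathfrak{p} \in V^*(I)$, i.e. $I \subseteq \mathfrak{p}$. This need not hold for an arbitrary monomial ideal $I$ (for instance if some generator of $I$ is a pure power of $x_j$, or more precisely if $x_j^{a}$ divides some minimal generator of $I$ with that generator supported only on $x_j$). However, the monomial localization $I(\mathfrak{p})$ is defined for \emph{every} monomial ideal $I$ regardless of whether $\mathfrak{p}$ contains $I$, and the proof of Proposition \ref{PRO.Localization} only uses parts (i), (ii), (iii) of Lemma \ref{LEM.Localization}, none of which require $\mathfrak{p} \in V^*(I)$. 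So in fact the argument of Proposition \ref{PRO.Localization} applies verbatim to the prime $\mathfrak{p}$ here: from $I^r J^s = I^{r+s} \cap J^s$ we apply the localization-at-$\mathfrak{p}$ operation to both sides, use Lemma \ref{LEM.Localization}(ii) to get $(I^r J^s)(\mathfrak{p}) = I(\mathfrak{p})^r J(\mathfrak{p})^s$ and likewise $(I^{r+s})(\mathfrak{p}) = I(\mathfrak{p})^{r+s}$, and use Lemma \ref{LEM.Localization}(iii) to distribute the localization over the intersection. This yields $I(\mathfrak{p})^r J(\mathfrak{p})^s = I(\mathfrak{p})^{r+s} \cap J(\mathfrak{p})^s$ for all $r,s \geq 0$, i.e. $J/x_j$ is a demotion of $I/x_j$.

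The main (indeed only) subtlety to address is the one flagged above: whether $\mathfrak{p} \in V^*(I)$ is genuinely needed. The cleanest way to handle it is to phrase the deduction as "the proof of Proposition \ref{PRO.Localization} goes through for any monomial prime $\mathfrak{p}$, since it invokes only Lemma \ref{LEM.Localization}(i)--(iii)," which are unconditional. No new computation is required beyond quoting these three functorial identities for monomial localization. Thus the corollary is immediate.

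\begin{proof}
Let $\mathfrak{m} = (x_1, \ldots, x_n)$ denote the graded maximal ideal of $R$, and put $\mathfrak{p} := \mathfrak{m} \setminus \{x_j\} = (x_1, \ldots, x_{j-1}, x_{j+1}, \ldots, x_n)$, which is a monomial prime ideal of $R$ with $R(\mathfrak{p}) = K[x_1, \ldots, x_{j-1}, x_{j+1}, \ldots, x_n]$. By definition, the monomial localization $I(\mathfrak{p})$ is obtained from $I$ via the $K$-algebra homomorphism $R \to R(\mathfrak{p})$ sending $x_j \mapsto 1$ and fixing all other variables; this is precisely the substitution $x_j = 1$ that defines the contraction, so $I(\mathfrak{p}) = I/x_j$ and $J(\mathfrak{p}) = J/x_j$. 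In particular, $J \subseteq I$ gives $J/x_j \subseteq I/x_j$.

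Now assume $J$ is a demotion of $I$, and fix $r,s \geq 0$. Then $I^r J^s = I^{r+s} \cap J^s$, hence $(I^r J^s)(\mathfrak{p}) = (I^{r+s} \cap J^s)(\mathfrak{p})$. Applying Lemma \ref{LEM.Localization}(ii) to the left-hand side gives
\[
(I^r J^s)(\mathfrak{p}) = I(\mathfrak{p})^r \, J(\mathfrak{p})^s,
\]
while Lemma \ref{LEM.Localization}(ii) and (iii) applied to the right-hand side give
\[
(I^{r+s} \cap J^s)(\mathfrak{p}) = (I^{r+s})(\mathfrak{p}) \cap (J^s)(\mathfrak{p}) = I(\mathfrak{p})^{r+s} \cap J(\mathfrak{p})^s.
\]
Combining these equalities yields $I(\mathfrak{p})^r J(\mathfrak{p})^s = I(\mathfrak{p})^{r+s} \cap J(\mathfrak{p})^s$ for all $r,s \geq 0$; that is, $I/x_j$ and $J/x_j$ satisfy the same identity, so $J/x_j$ is a demotion of $I/x_j$.
\end{proof}
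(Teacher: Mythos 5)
Your proof is essentially the paper's own argument: the paper also observes that contraction by $x_j$ is the monomial localization at $\mathfrak{p} = \mathfrak{m}\setminus\{x_j\}$ and then simply cites Proposition \ref{PRO.Localization}. What you add, correctly, is a discussion of the hypothesis $\mathfrak{p}\in V^*(I)$: Proposition \ref{PRO.Localization} is stated only for primes containing $I$, and $\mathfrak{m}\setminus\{x_j\}$ need not contain $I$ (e.g.\ if $x_j$ divides every minimal generator of $I$). The paper glosses over this point when it says the corollary follows ``immediately''; your observation that the proof of Proposition \ref{PRO.Localization} uses only the unconditional identities in Lemma \ref{LEM.Localization}(i)--(iii) -- so the hypothesis $\mathfrak{p}\in V^*(I)$ is in fact never used -- is the right way to close that small gap, and your direct argument from Lemma \ref{LEM.Localization} is complete and correct.
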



We continue by recalling the definition of the deletion operation. Let $I \subset R = K[x_1, \ldots, x_n]$ be a monomial ideal with minimal 
generating set $\mathcal{G}(I) = \{u_1, \ldots, u_m\}$. For some index $1 \leq j \leq n$, the \emph{deletion} of $x_j$ from $I$, denoted by 
$I \setminus \{x_j\}$ (sometimes written as $I \setminus x_j$ for simplicity), is obtained by setting $x_j = 0$ in each generator $u_i$, for all $i = 1, \ldots, m$.

The lemma below plays a crucial role in the proof of Proposition \ref{PRO.Deletion}. For the reader’s convenience, we include it here.

\begin{lemma}(\cite[Lemma 4.12]{RNA})\label{LEM.Deletion}
Let $I$ and $J$ be two monomial ideals   in  $R=K[x_1,\ldots, x_n]$ over a field $K$,  and $1\leq j \leq n$. Then the following statements hold.
\begin {itemize}
\item[(i)] $(I+J)\setminus x_j= I\setminus x_j + J\setminus x_j$;
\item[(ii)] $(IJ)\setminus x_j= (I\setminus x_j)  (J\setminus x_j)$;
\item[(iii)] $(I\cap J)\setminus x_j= (I\setminus x_j) \cap  (J\setminus x_j)$; 
\item[(iv)]  $\sqrt{I\setminus x_j}=\sqrt{I}\setminus x_j$.
\item[(v)] If $Q$ is a $\mathfrak{q}$-primary monomial ideal in $R$, then $Q\setminus x_j$ is a ($\mathfrak{q}\setminus x_j$)-primary monomial ideal in $R\setminus x_j$;
\item[(vi)] If $I=Q_1\cap \cdots \cap Q_s$ is a minimal primary decomposition of $I$ in $R$, then  $I\setminus x_j=(Q_1\setminus x_j)\cap \cdots \cap (Q_s\setminus x_j)$ is a  primary decomposition of $I\setminus x_j$ in $R\setminus x_j$;
\item[(vii)] $\mathrm{Ass}_{R\setminus x_j}((R \setminus x_j)/(I\setminus x_j))\subseteq \{\mathfrak{q}\setminus x_j~:~ \mathfrak{q}\in \mathrm{Ass}_{R}(R/I) \}$.
\end{itemize}
 \end{lemma}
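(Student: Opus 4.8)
The plan is to realize the deletion $I\setminus x_j$ as the image of $I$ under the reduction map $\pi\colon R\to \bar R:=R/(x_j)\cong R\setminus x_j$ (so $x_j\mapsto 0$ and $x_i\mapsto x_i$ for $i\ne j$), and to exploit the fact that, for a monomial ideal $I$, the ideal $I\setminus x_j$ is precisely the $\bar R$-span of those monomials of $I$ not divisible by $x_j$; equivalently, its minimal generators are exactly the elements of $\mathcal{G}(I)$ in which $x_j$ does not occur (no such generator can divide another, by minimality of $\mathcal{G}(I)$). I would record this description first, since it trivializes parts (i)--(iv).

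Indeed, since $\pi$ is a surjective ring homomorphism, $\pi(I+J)=\pi(I)+\pi(J)$ and $\pi(IJ)=\pi(I)\pi(J)$, which are exactly (i) and (ii). For (iii), I would argue on monomials: a monomial $m$ with $x_j\nmid m$ lies in $(I\cap J)\setminus x_j$ iff $m\in I\cap J$, iff $m\in I$ and $m\in J$, iff $m\in (I\setminus x_j)\cap (J\setminus x_j)$; as all the ideals involved are monomial, this suffices. One should note that the analogous identity fails for non-monomial ideals, so the monomial hypothesis is genuinely used here. For (iv), recall that $\sqrt{I}$ is generated by the radicals (square-free parts) of the generators of $I$, and that $x_j$ divides the square-free part of a monomial exactly when it divides the monomial; hence discarding the generators involving $x_j$ commutes with passing to radicals.

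The substantive part is (v), from which (vi) and (vii) follow quickly. Here I would invoke the structure of primary monomial ideals: a monomial ideal $Q$ is $\mathfrak{q}$-primary with $\mathfrak{q}=\sqrt{Q}=(x_{i_1},\dots,x_{i_k})$ if and only if every minimal generator of $Q$ is supported on $\{x_{i_1},\dots,x_{i_k}\}$ and $Q$ contains a pure power of each $x_{i_\ell}$ (for the forward implication: a generator divisible by a variable $y\notin\mathfrak{q}$ would, by primariness and $y\notin\sqrt{Q}$, force its cofactor into $Q$, contradicting minimality). If $x_j\notin\mathfrak{q}$, then no generator of $Q$ involves $x_j$, so $Q\setminus x_j=Q$ and $\mathfrak{q}\setminus x_j=\mathfrak{q}$, and there is nothing to prove. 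If $x_j\in\mathfrak{q}$, say $x_j=x_{i_1}$, the surviving generators are exactly those supported on $\{x_{i_2},\dots,x_{i_k}\}$; since $x_{i_\ell}\in\sqrt{Q}$ for $\ell\ge 2$ forces a pure-power generator $x_{i_\ell}^{c_\ell}\in\mathcal{G}(Q)$ not involving $x_j$, the ideal $Q\setminus x_j$ still contains a pure power of each $x_{i_\ell}$ with $\ell\ge 2$, so by the structure theorem it is $(\mathfrak{q}\setminus x_j)$-primary in $\bar R$; the degenerate case $\mathfrak{q}=(x_j)$ gives $Q\setminus x_j=(0)$, which is $(0)$-primary in the domain $\bar R$, matching $\mathfrak{q}\setminus x_j=(0)$. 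Then (vi) is immediate by iterating (iii) to get $I\setminus x_j=\bigcap_{\ell}(Q_\ell\setminus x_j)$ and applying (v) to each factor; and (vii) follows because for any primary decomposition the associated primes of the quotient lie among the radicals of the primary components, and by (iv)/(v) we have $\sqrt{Q_\ell\setminus x_j}=\sqrt{Q_\ell}\setminus x_j$ with $\sqrt{Q_\ell}\in\mathrm{Ass}_R(R/I)$ thanks to minimality of the chosen decomposition.

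The main obstacle, everything else being essentially bookkeeping, is getting part (v) fully correct: one must pin down the structural characterization of primary monomial ideals cleanly, and one must not overlook the degenerate situation $\mathfrak{q}=(x_j)$, where $Q\setminus x_j$ collapses to the zero ideal. A secondary point of care is the distinction between \emph{primary decomposition} and \emph{minimal primary decomposition}: statement (vi) asserts only the former, but the identification of the radicals $\sqrt{Q_\ell}$ with associated primes of $R/I$ that is needed in (vii) requires starting from a minimal decomposition.
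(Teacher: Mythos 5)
The paper cites this lemma from \cite{RNA} without reproducing a proof, so there is no in-paper argument to compare against; I can only assess your proof on its own terms, and it is correct. Realizing $I\setminus x_j$ as $\pi(I)$ under the quotient $\pi\colon R\to R/(x_j)\cong R\setminus x_j$ (which matches the paper's definition via setting $x_j=0$ in the generators) gives (i) and (ii) for free; the monomial-by-monomial check for (iii) is exactly the right way to repair the fact that surjections need not commute with intersections; (iv) follows cleanly from the description of $\sqrt{I}$ via square-free parts; and the structure theorem for primary monomial ideals (all minimal generators supported on $\operatorname{supp}(\mathfrak q)$, plus a pure power of each variable of $\mathfrak q$) correctly drives (v), including the degenerate case $\mathfrak q=(x_j)$. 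Note that you use \emph{both} directions of that characterization — the forward direction to see that deleting $x_j$ discards no ``foreign'' variables, the converse to recertify $Q\setminus x_j$ as primary in $\bar R$ — so it is worth stating it as a biconditional up front. Your closing remarks about (vi) only asserting a primary (not necessarily minimal) decomposition, and about needing the minimality of the original decomposition to identify the $\sqrt{Q_\ell}$ with $\mathrm{Ass}_R(R/I)$ in (vii), are exactly the right cautions; one could add that several $Q_\ell\setminus x_j$ may coincide or become redundant, which is precisely why the conclusion in (vii) is an inclusion rather than an equality.
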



The following proposition states that whenever a monomial ideal satisfies the demotion property, each of its deletions also retains this property. 

\begin{proposition}\label{PRO.Deletion}
Let  $J \subseteq I$ be monomial ideals in  $R=K[x_1, \ldots, x_n]$, and  $1\leq j \leq n$.  If  $J$ is a demotion of $I$, then  $J\setminus x_j$ 
is a demotion of   $I\setminus x_j$. 
\end{proposition}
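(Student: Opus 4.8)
The plan is to mimic the proof of Proposition~\ref{PRO.Localization}, replacing monomial localization with deletion and using Lemma~\ref{LEM.Deletion} in place of Lemma~\ref{LEM.Localization}. Concretely, I would proceed as follows.

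\medskip

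\noindent\textbf{Step 1: Translate the hypothesis.} Assume $J$ is a demotion of $I$ and fix $r,s\ge 0$. Then $I^rJ^s = I^{r+s}\cap J^s$. Apply the operation $-\setminus x_j$ to both sides to obtain the single equation $(I^rJ^s)\setminus x_j = (I^{r+s}\cap J^s)\setminus x_j$.

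\medskip

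\noindent\textbf{Step 2: Push the operation through products and intersections.} Using Lemma~\ref{LEM.Deletion}(ii) repeatedly, the left-hand side becomes $(I\setminus x_j)^r (J\setminus x_j)^s$; and using Lemma~\ref{LEM.Deletion}(iii) together with (ii), the right-hand side becomes $(I\setminus x_j)^{r+s}\cap (J\setminus x_j)^s$. Also, applying Lemma~\ref{LEM.Deletion}(i) (or simply noting that deletion preserves containments, since setting $x_j=0$ is order-preserving on monomials) gives $J\setminus x_j \subseteq I\setminus x_j$. Combining these yields $(I\setminus x_j)^r (J\setminus x_j)^s = (I\setminus x_j)^{r+s}\cap (J\setminus x_j)^s$ for all $r,s\ge 0$, which is exactly the assertion that $J\setminus x_j$ is a demotion of $I\setminus x_j$.

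\medskip

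\noindent\textbf{Remark on the obstacle.} There is essentially no hard part: the entire argument is a formal consequence of the fact that the deletion operation commutes with sums, products, and finite intersections of monomial ideals, all of which is already recorded in Lemma~\ref{LEM.Deletion}(i)--(iii). The only point deserving a line of care is that $(K^n)\setminus x_j = (K\setminus x_j)^n$ for a monomial ideal $K$ and $n\ge 0$ (including the convention $K^0 = R$), which follows by induction from part~(ii); and that deletion respects the containment $J\subseteq I$, which is immediate from part~(i) since $(I+J)\setminus x_j = I\setminus x_j + J\setminus x_j$ and $J\subseteq I$ forces $I+J=I$. So the write-up will be three or four lines, parallel to the proof of Proposition~\ref{PRO.Localization}.
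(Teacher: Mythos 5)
Your proposal is correct and follows essentially the same route as the paper's proof: apply the deletion operation to the identity $I^rJ^s = I^{r+s}\cap J^s$, then use Lemma~\ref{LEM.Deletion}(ii) and (iii) to distribute deletion over products and intersections, and Lemma~\ref{LEM.Deletion}(i) to preserve the containment $J\subseteq I$. The only difference is cosmetic; your added remark spelling out why $(K^n)\setminus x_j = (K\setminus x_j)^n$ and why containment is preserved is a reasonable expansion of details the paper leaves implicit.
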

\begin{proof}
Assume that $J$ is a demotion of $I$,   and fix $r,s\geq 0$. This implies that  $I^rJ^s=I^{r+s} \cap J^s$, and hence
  $(I^rJ^s)\setminus x_j=(I^{r+s} \cap J^s)\setminus x_j$. According to  Lemma \ref{LEM.Deletion}(i), we obtain  $J\setminus x_j \subseteq I\setminus x_j$. 
   It can be concluded  from parts (ii) and (iii) of  Lemma \ref{LEM.Deletion} that  
   $(I\setminus x_j)^r (J\setminus x_j)^s=(I\setminus x_j)^{r+s} \cap (J\setminus x_j)^s$. This gives rise to $J\setminus x_j$ 
is a demotion of   $I\setminus x_j$, as claimed. 
 \end{proof}


\subsection{Demotions of  monomial  ideals under permutation  and monomial  multiple}$\newline$

 In this subsection, we investigate how the demotion property behaves under permutations and the monomial multiple operation in Propositions \ref{PRO.Permutation} and \ref{Multiple}, respectively.  Prior to presenting Proposition \ref{PRO.Permutation}, we first need to  recall the following definition.

 \begin{definition} (\cite[Definition 11]{RT}) \label{Permutation}
 \em{
  Let $I\subset R=K[x_1, \ldots, x_n]$ be a square-free monomial ideal and   $\sigma$ a permutation on $\mathrm{supp}(I)$.
  Then we define the $\sigma(I)$ as the square-free monomial ideal in $R$ such that 
  $x_{i_1} \cdots x_{i_s}\in \mathcal{G}(I)$ if and only if   $x_{\sigma(i_1)} \cdots x_{\sigma(i_s)}\in \mathcal{G}(\sigma(I))$.  
  }
 \end{definition}

\begin{proposition} \label{PRO.Permutation}
With the notation of Definition \ref{Permutation}, a square-free monomial ideal $J$ is a demotion of a square-free monomial ideal $I$ if and only if $\sigma(J)$ is a demotion of $\sigma(I)$. 
\end{proposition}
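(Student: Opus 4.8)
The plan is to exploit the fact that the permutation $\sigma$ acts on $R$ as a $K$-algebra automorphism. Precisely, extend $\sigma$ to a map on variables by setting $\sigma(x_i) = x_{\sigma(i)}$ for $i \in \mathrm{supp}(I)$ and $\sigma(x_j) = x_j$ for $j \notin \mathrm{supp}(I)$; this induces a $K$-algebra automorphism $\varphi_\sigma \colon R \to R$. The first step is to record that $\varphi_\sigma$ is compatible with the monomial operations in play: for monomial ideals $A,B$ one has $\varphi_\sigma(AB) = \varphi_\sigma(A)\varphi_\sigma(B)$, $\varphi_\sigma(A \cap B) = \varphi_\sigma(A) \cap \varphi_\sigma(B)$, $\varphi_\sigma(A^k) = \varphi_\sigma(A)^k$, and $A \subseteq B \iff \varphi_\sigma(A) \subseteq \varphi_\sigma(B)$. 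The multiplicativity and the behavior under powers are immediate since $\varphi_\sigma$ is a ring homomorphism; the compatibility with intersection follows because $\varphi_\sigma$ permutes the monomials of $R$ bijectively, so a monomial lies in $A \cap B$ precisely when its image lies in $\varphi_\sigma(A) \cap \varphi_\sigma(B)$ (this is where square-freeness and the monomial structure are used — an automorphism need not commute with intersection in general, but a monomial-permuting one does).

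The second step is to identify $\varphi_\sigma(I)$ with $\sigma(I)$ as defined in Definition~\ref{Permutation}. By definition $x_{i_1}\cdots x_{i_s} \in \mathcal{G}(I)$ iff $x_{\sigma(i_1)}\cdots x_{\sigma(i_s)} \in \mathcal{G}(\sigma(I))$, which is exactly the statement that $\varphi_\sigma$ carries the minimal generating set of $I$ onto that of $\sigma(I)$; since a monomial ideal is determined by its minimal generators, $\varphi_\sigma(I) = \sigma(I)$, and likewise $\varphi_\sigma(J) = \sigma(J)$.

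With these two steps in hand the proposition is a one-line consequence. Assume $J$ is a demotion of $I$, so $I^r J^s = I^{r+s} \cap J^s$ for all $r,s \geq 0$. Apply $\varphi_\sigma$ to both sides and use multiplicativity, power-compatibility, and intersection-compatibility to get
\[
\sigma(I)^r \sigma(J)^s = \varphi_\sigma(I^r J^s) = \varphi_\sigma(I^{r+s} \cap J^s) = \sigma(I)^{r+s} \cap \sigma(J)^s,
\]
and $\sigma(J) \subseteq \sigma(I)$ follows from $J \subseteq I$; hence $\sigma(J)$ is a demotion of $\sigma(I)$. For the converse, note that $\sigma$ is invertible with inverse permutation $\sigma^{-1}$, and $\sigma^{-1}(\sigma(I)) = I$, $\sigma^{-1}(\sigma(J)) = J$, so applying the direction just proved to $\sigma^{-1}$ gives the reverse implication.

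The only genuinely non-formal point — hence the ``main obstacle,'' though it is mild — is justifying that $\varphi_\sigma$ commutes with finite intersections of monomial ideals; everything else is bookkeeping. One clean way to see it is to observe that a monomial ideal $A$ is the $K$-span of the monomials it contains, that $\varphi_\sigma$ maps the monomial basis of $R$ bijectively to itself, and that $A \cap B$ is spanned by the monomials lying in both $A$ and $B$; applying the bijection $\varphi_\sigma$ to this set of monomials yields exactly the monomials spanning $\varphi_\sigma(A) \cap \varphi_\sigma(B)$. Alternatively, one can invoke Fact~\ref{fact1} together with a direct check on generators. I would present the argument via the automorphism $\varphi_\sigma$ as above, since it makes the symmetry of the ``if and only if'' transparent and isolates the single substantive lemma.
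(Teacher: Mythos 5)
Your proof is correct and follows essentially the same route as the paper: define the $K$-algebra automorphism $\varphi$ induced by $\sigma$, observe it identifies $I$ with $\sigma(I)$ and commutes with products, powers, and intersections, then apply $\varphi$ (resp.\ $\varphi^{-1}$) to the defining equality $I^rJ^s = I^{r+s}\cap J^s$. One small remark: your worry that ``an automorphism need not commute with intersection in general'' is unfounded --- any ring isomorphism, being a bijection, satisfies $\varphi(A\cap B)=\varphi(A)\cap\varphi(B)$ for arbitrary ideals, so the square-free and monomial hypotheses are not needed for that step.
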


\begin{proof}
  Let  $\mathrm{supp}(I)=\{x_1, \ldots, x_n\}$, and fix $r,s\geq 0$.  Consider the $K$-algebra homomorphism $\varphi : R=K[x_1, \ldots, x_n] \rightarrow R$  given by $\varphi(x_i)=x_{\sigma(i)}$ for all $i=1, \ldots, n$.  It is straightforward to see that  $\varphi$ is an automorphism of $R$ with  $\varphi(I)=\sigma(I)$. Particularly, it is clear that $J\subseteq I$ if and only if $\sigma(J) \subseteq \sigma(I)$. 
  
(\(\Rightarrow\))
Assume \(I^{r+s} \cap J^s = I^r J^s\) holds. We want to show that 
\[
(\sigma(I))^{r+s} \cap (\sigma(J))^s = (\sigma(I))^r (\sigma(J))^s.
\]
Apply \(\varphi\) to both sides of \(I^{r+s} \cap J^s = I^r J^s\), we get $\varphi(I^{r+s} \cap J^s) = \varphi(I^r J^s).$ 
 Due to  \(\varphi\) is an automorphism of $R$, we have the following equalities 
         \[
        \varphi(I^{r+s} \cap J^s) = \varphi(I^{r+s}) \cap \varphi(J^s) = (\sigma(I))^{r+s} \cap (\sigma(J))^s, \text{and}
        \]
        \[
        \varphi(I^r J^s) = \varphi(I^r) \varphi(J^s) = (\sigma(I))^r (\sigma(J))^s.
        \]

Thus, we can conclude that $(\sigma(I))^{r+s} \cap (\sigma(J))^s = (\sigma(I))^r (\sigma(J))^s.$ 

(\(\Leftarrow\))  Assume $(\sigma(I))^{r+s} \cap (\sigma(J))^s = (\sigma(I))^r (\sigma(J))^s.$ Our aim is to establish \(I^{r+s} \cap J^s = I^r J^s\).  
As \(\varphi\) is an isomorphism, it has an inverse \(\varphi^{-1}\). Apply \(\varphi^{-1}\) to both sides, we obtain 
$\varphi^{-1}\left((\sigma(I))^{r+s} \cap (\sigma(J))^s\right) = \varphi^{-1}\left((\sigma(I))^r (\sigma(J))^s\right).$
This implies that  $\varphi^{-1}((\sigma(I))^{r+s}) \cap \varphi^{-1}((\sigma(J))^s) = I^{r+s} \cap J^s$ and moreover
    $\varphi^{-1}((\sigma(I))^r) \varphi^{-1}((\sigma(J))^s) = I^r J^s.$ Hence, $I^{r+s} \cap J^s = I^r J^s.$
\end{proof}


\begin{proposition}\label{Multiple}
Let  $J \subseteq I$ be monomial ideals in  $R=K[x_1, \ldots, x_n]$, and   $h$ be a monomial in $R$ such that 
$\mathrm{supp}(h) \cap (\mathrm{supp}(I) \cup \mathrm{supp}(J))=\emptyset$. 
 Then  $J$ is a demotion of $I$ if and only if   $hJ$  is a demotion of   $hI$. 
  \end{proposition}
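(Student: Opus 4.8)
The plan is to reduce both directions to a single multiplicative identity. Since $\operatorname{supp}(h)$ is disjoint from $\operatorname{supp}(I)\cup\operatorname{supp}(J)$, every monomial of $h^k K$ is coprime to every monomial appearing in a generator of $I$ or $J$, so for any monomial ideals $A,B$ with $\operatorname{supp}(A)\cup\operatorname{supp}(B)$ disjoint from $\operatorname{supp}(h)$ we have the clean rules $(hA)(hB)=h^2(AB)$, $(hA)\cap(hB)=h^2(A\cap B)$, and more generally $(hA)^k=h^k A^k$ and $(h^kA)\cap(h^kB)=h^k(A\cap B)$ — the last because taking lcm's factors through the coprime part $h^k$. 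In particular $hJ\subseteq hI$ if and only if $J\subseteq I$, which handles the containment hypothesis in both implications. I would state these elementary facts first (citing the remark immediately preceding the proposition, or Fact~\ref{fact1}, for the distributivity of $\cap$ over monomial ideals if needed), since everything else is bookkeeping on top of them.

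For the forward direction, assume $J$ is a demotion of $I$ and fix $r,s\ge 0$. Then
\[
(hI)^r(hJ)^s = h^{r+s} I^r J^s = h^{r+s}\bigl(I^{r+s}\cap J^s\bigr)
= \bigl(h^{r+s}I^{r+s}\bigr)\cap\bigl(h^{r+s}J^s\bigr)
= (hI)^{r+s}\cap(hJ)^s,
\]
using $(hI)^r=h^rI^r$, $(hJ)^s=h^sJ^s$ (so the product carries a factor $h^{r+s}$), then the demotion equality for $I,J$, then the $h^{r+s}$-pull-out-of-intersection rule, and finally the power rules again. So $hJ$ is a demotion of $hI$.

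For the backward direction, assume $hJ$ is a demotion of $hI$ and fix $r,s\ge 0$. Running the same string of equalities in reverse gives
\[
h^{r+s} I^r J^s = (hI)^r(hJ)^s = (hI)^{r+s}\cap(hJ)^s = h^{r+s}\bigl(I^{r+s}\cap J^s\bigr),
\]
and since multiplication by the nonzerodivisor monomial $h^{r+s}$ is injective on monomial ideals (equivalently, one may cancel the coprime factor $h^{r+s}$ from each generator on both sides), we conclude $I^rJ^s = I^{r+s}\cap J^s$. As $r,s$ were arbitrary, $J$ is a demotion of $I$.

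The only point requiring care — and the one I would isolate as a lemma-style remark before the computation — is the identity $(h^{k}A)\cap(h^{k}B)=h^{k}(A\cap B)$ for monomial ideals $A,B$ with supports disjoint from $\operatorname{supp}(h)$, together with the cancellation "$h^kA=h^kB\Rightarrow A=B$". Both are immediate from the fact that the minimal monomial generators of $h^kA$ are exactly $\{h^k u : u\in\mathcal G(A)\}$ and that $\operatorname{lcm}(h^ku,h^kv)=h^k\operatorname{lcm}(u,v)$ when $h$ is coprime to $u$ and $v$; I do not expect any genuine obstacle beyond recording these carefully, since the paper already uses the analogous coprime-support facts (e.g. "$I\cap J=IJ$ when $\operatorname{supp}(I)\cap\operatorname{supp}(J)=\emptyset$") without further comment.
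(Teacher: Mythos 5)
Your proof is correct and follows the same approach as the paper's: both reduce the statement to the identity $(hI)^{r+s}\cap(hJ)^s = h^{r+s}\bigl(I^{r+s}\cap J^s\bigr)$ together with $(hI)^r(hJ)^s = h^{r+s}I^rJ^s$, and then cancel the nonzerodivisor $h^{r+s}$. The only difference is cosmetic: you pull $h^{r+s}$ directly out of the intersection via the $\operatorname{lcm}$ computation, whereas the paper routes through the coprime-support identity $A\cap B = AB$ and regroups the resulting triple intersection, but the chain of equalities and the final cancellation step are identical in substance.
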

  
\begin{proof}
Since $K$ is a field, we obtain  $R=K[x_1, \ldots, x_n]$ is an integral domain,  and so it is easy to see that  $J\subseteq I$ if and only if $hJ \subseteq hI$. 
Fix $r,s\geq 0$. Due to 
$\mathrm{supp}(h) \cap (\mathrm{supp}(I) \cup \mathrm{supp}(J))=\emptyset$, we get the following equalities 
\begin{align*}
h^{r+s}(I^{r+s} \cap J^s)&= (h^{r+s}) \cap (I^{r+s} \cap J^s)\\
 &=((h)^{r+s} \cap I^{r+s}) \cap ((h)^s  \cap J^s)\\
 &=(hI)^{r+s} \cap (hJ)^s. 
 \end{align*}
On the other hand, we always have $(hI)^r (hJ)^s = h^{r+s}I^rJ^s$.  On account of $R=K[x_1, \ldots, x_n]$ is an integral domain, we can rapidly deduce that 
$I^{r+s} \cap J^s=I^rJ^s$ if and only if $(hI)^{r+s} \cap (hJ)^s=(hI)^r(hJ)^s$.  This means that $J$ is a demotion of $I$ if and only if   $hJ$  is a demotion of   $hI$. 
\end{proof}


\subsection{Demotions of  monomial  ideals under expansion}$\newline$

We begin this subsection by recalling the definition of the expansion operation on monomial ideals, as introduced in \cite{BH}.  

Let $K$ be a field and consider the polynomial ring $R = K[x_1, \ldots, x_n]$ over $K$ in the variables $x_1, \ldots, x_n$. Fix an ordered $n$-tuple $(i_1, \ldots, i_n)$ of positive integers, and define the polynomial ring  $R^{(i_1,\ldots,i_n)}$ over $K$ with variables
\[
x_{11}, \ldots, x_{1i_1}, \; x_{21}, \ldots, x_{2i_2}, \; \ldots, \; x_{n1}, \ldots, x_{ni_n}.
\]

For each $j=1,\ldots,n$, let $\mathfrak{p}_j$ denote the monomial prime ideal
\[
\mathfrak{p}_j = (x_{j1}, x_{j2}, \ldots, x_{ji_j}) \subseteq R^{(i_1,\ldots,i_n)}.
\]

Now, let $I \subset R$ be a monomial ideal with a minimal generating set $\mathcal{G}(I) = \{\mathbf{x}^{\mathbf{a}_1}, \ldots, \mathbf{x}^{\mathbf{a}_m}\},$ 
 where  $\mathbf{x}^{\mathbf{a}_i} = x_1^{a_i(1)} \cdots x_n^{a_i(n)},$ 
and $\mathbf{a}_i = (a_i(1), \ldots, a_i(n))$ with $a_i(j)$ denoting the $j$th component of $\mathbf{a}_i$ for $i = 1,\ldots,m$.  

The \emph{expansion of $I$ with respect to the $n$-tuple $(i_1, \ldots, i_n)$}, denoted by $I^{(i_1,\ldots,i_n)}$, is defined as the monomial ideal
\[
I^{(i_1,\ldots,i_n)} = \sum_{i=1}^m \mathfrak{p}_1^{a_i(1)} \cdots \mathfrak{p}_n^{a_i(n)} \subseteq R^{(i_1,\ldots,i_n)}.
\]
For simplicity, we will often write $R^*$ and $I^*$ instead of $R^{(i_1,\ldots,i_n)}$ and $I^{(i_1,\ldots,i_n)}$, respectively.\par 
For example, assume that $R = K[x_1, x_2, x_3, x_4]$ and consider the ordered $4$-tuple $(2,3,1,2)$. We then obtain 
\[
\mathfrak{p}_1 = (x_{11}, x_{12}), \quad 
\mathfrak{p}_2 = (x_{21}, x_{22}, x_{23}), \quad
\mathfrak{p}_3 = (x_{31}), \quad
\mathfrak{p}_4 = (x_{41}, x_{42}).
\]
Now, for the monomial ideal  $I = (x_1^2x_2,x_1x_3,x_2x_4^2) \subset R,$  
the expanded ideal $I^* \subseteq K[x_{11}, x_{12}, x_{21}, x_{22}, x_{23}, x_{31}, x_{41}, x_{42}]$ is given by 
 $I^* = \mathfrak{p}_1^2 \mathfrak{p}_2 + \mathfrak{p}_1 \mathfrak{p}_3 + \mathfrak{p}_2 \mathfrak{p}_4^2,$ namely,
\begin{align*}
I^* = (&x_{11}^2 x_{21},\, x_{11}^2 x_{22},\, x_{11}^2 x_{23},\, x_{12}^2 x_{21},\, x_{12}^2 x_{22},\, x_{12}^2 x_{23},\\
& x_{11} x_{31},\, x_{12} x_{31},\, x_{21}x_{41}^2,\, x_{21}x_{41}x_{42},\, x_{21}x_{42}^2,\, x_{22}x_{41}^2,\\
& x_{22}x_{41}x_{42},\, x_{22}x_{42}^2,\, x_{23}x_{41}^2,\, x_{23}x_{41}x_{42},\, x_{23}x_{42}^2 ).
\end{align*}

\bigskip
To establish  Proposition \ref{PRO.Expansion}, we first make use of the following auxiliary result.

\begin{lemma} (\cite [Lemma 1.1]{BH}) \label{Lem.Bayati.Expansion}
Let $I$ and $J$ be monomial ideals in a polynomial ring $S$. Then
\begin{itemize}
\item[(i)] $f \in I^*$ if and only if $\pi(f)\in I$, for all $f\in S^*$;
\item[(ii)] $(I + J)^* = I^* + J^*$;
\item[(iii)] $(IJ)^* = I^*J^*$;
\item[(iv)] $(I \cap J)^* = I^*\cap J^*$;
\item[(v)] $(I : J)^* = (I^* : J^*)$;
\item[(vi)] $\sqrt{I^*} = (\sqrt{I})^*$;
\item[(vii)] If the monomial ideal $Q$ is $\mathfrak{p}$-primary, then $Q^*$ is
$\mathfrak{p}^*$-primary.
\end{itemize}
\end{lemma}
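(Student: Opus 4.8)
The plan is to reduce the whole statement to one combinatorial fact about the \emph{folding} homomorphism. Let $\pi\colon S^{*}\to S$ be the $K$-algebra map determined by $x_{jk}\mapsto x_{j}$ for all $j,k$. The key observation is that for a monomial $g\in S^{*}$ and nonnegative integers $c_{1},\dots,c_{n}$ one has $g\in\mathfrak{p}_{1}^{c_{1}}\cdots\mathfrak{p}_{n}^{c_{n}}$ if and only if, for every $j$, the total degree of $g$ in the block of variables $x_{j1},\dots,x_{ji_{j}}$ is at least $c_{j}$; and this total degree equals $\deg_{x_{j}}\!\bigl(\pi(g)\bigr)$. This is short to prove: since the variable blocks are pairwise disjoint, a monomial lies in the product of the ideals $\mathfrak{p}_{j}^{c_{j}}$ precisely when, for each $j$, it is divisible by a degree-$c_{j}$ monomial in the $j$-th block. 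Two consequences for single monomials follow at once: for $u=\mathbf{x}^{\mathbf{a}}\in S$ the expansion $u^{*}=\mathfrak{p}_{1}^{a(1)}\cdots\mathfrak{p}_{n}^{a(n)}$ is the monomial ideal spanned by the monomials $g\in S^{*}$ with $u\mid\pi(g)$; and $(uv)^{*}=u^{*}v^{*}$ for monomials $u,v$ (regroup the prime-power factors, which commute), while for a monomial prime $\mathfrak{p}$ the ideal $\mathfrak{p}^{*}=\sum_{j\in\operatorname{supp}(\mathfrak{p})}\mathfrak{p}_{j}$ is generated by a subset of the variables of $S^{*}$, hence is again a monomial prime.

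Part (i) is then immediate on monomials: a monomial $f\in S^{*}$ lies in $I^{*}=\sum_{i=1}^{m}(\mathbf{x}^{\mathbf{a}_{i}})^{*}$ iff it lies in some summand $(\mathbf{x}^{\mathbf{a}_{i}})^{*}$, iff $\mathbf{x}^{\mathbf{a}_{i}}\mid\pi(f)$ for some $i$, iff $\pi(f)\in I$. In particular $I^{*}$ is the monomial ideal generated by all monomials $f$ with $\pi(f)\in I$, a description independent of the chosen generating set of $I$ (which also settles well-definedness of the expansion operation). From this I would deduce (ii)--(vii). For (ii) and (iv): a monomial of $S$ lies in $I+J$ (respectively $I\cap J$) iff it lies in $I$ or in $J$ (respectively in both), so applying (i) gives $(I+J)^{*}=I^{*}+J^{*}$ and $(I\cap J)^{*}=I^{*}\cap J^{*}$. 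For (iii): $IJ$ is generated by the products $uv$ with $u\in\mathcal{G}(I)$, $v\in\mathcal{G}(J)$, hence $(IJ)^{*}=\sum_{u,v}(uv)^{*}=\sum_{u,v}u^{*}v^{*}=\bigl(\sum_{u}u^{*}\bigr)\bigl(\sum_{v}v^{*}\bigr)=I^{*}J^{*}$.

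For (v), the point is that for a monomial $f\in S^{*}$ one has $\pi(f)\,J\subseteq I$ iff $fJ^{*}\subseteq I^{*}$: for $\Rightarrow$, every monomial $h\in J^{*}$ has $\pi(h)\in J$ by (i), so $\pi(fh)=\pi(f)\pi(h)\in I$, i.e.\ $fh\in I^{*}$, and testing on generators of $J^{*}$ gives the claim; for $\Leftarrow$, to each $v=\prod_{j}x_{j}^{b_{j}}\in\mathcal{G}(J)$ attach the lift $\tilde v:=\prod_{j}x_{j1}^{b_{j}}\in J^{*}$ with $\pi(\tilde v)=v$, so $f\tilde v\in I^{*}$ forces $\pi(f)v=\pi(f\tilde v)\in I$, and letting $v$ run over $\mathcal{G}(J)$ yields $\pi(f)J\subseteq I$. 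For (vi): $f^{N}\in I^{*}$ iff $\pi(f)^{N}=\pi(f^{N})\in I$, so the two radicals coincide and $\sqrt{I^{*}}=(\sqrt{I})^{*}$. For (vii): by (vi), $\sqrt{Q^{*}}=(\sqrt{Q})^{*}=\mathfrak{p}^{*}$ is a monomial prime; and if $fg\in Q^{*}$ with $g\notin\mathfrak{p}^{*}$, then $\pi(g)\notin\mathfrak{p}=\sqrt{Q}$ while $\pi(f)\pi(g)=\pi(fg)\in Q$, so $\pi(f)\in Q$ by $\mathfrak{p}$-primariness of $Q$, whence $f\in Q^{*}$; thus $Q^{*}$ is $\mathfrak{p}^{*}$-primary.

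The step I expect to need the most care is the passage from monomials to arbitrary elements. Since $\pi$ is far from injective and can cause cancellation (for instance $\pi(x_{11}-x_{12})=0$), statement (i) is really a statement about monomials, and the clean way to run the argument is to verify each of the ideal identities (ii)--(vii) on monomial generators, which is legitimate because all the ideals involved are monomial. Likewise, in (vii) it is safest to deduce that $Q^{*}$ is primary from its monomial structure rather than from (i) applied to non-monomial zero-divisors: $Q$ being $\mathfrak{p}$-primary forces every minimal generator of $Q$ to involve only the variables of $\mathfrak{p}$ (else a proper divisor of that generator would already lie in $Q$), hence every minimal generator of $Q^{*}$ involves only the variables of $\mathfrak{p}^{*}$, and together with $\sqrt{Q^{*}}=\mathfrak{p}^{*}$ the standard characterization of primary monomial ideals finishes it. The only other place where something genuine happens is the block-degree characterization of the first paragraph, which is exactly where disjointness of the variable blocks is used; everything afterwards is formal bookkeeping with $\pi$.
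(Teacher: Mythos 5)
The paper cites this lemma from Bayati--Herzog \cite{BH} without reproducing a proof, so there is no in-paper argument to compare against; your proof is correct and follows the standard (and, I believe, the original) route: characterize membership in $\mathfrak{p}_1^{c_1}\cdots\mathfrak{p}_n^{c_n}$ by block degrees, deduce (i), and then obtain (ii)--(vii) as formal consequences. You are also right to flag the one genuine subtlety, namely that (i) as literally stated for all $f \in S^*$ fails in the $(\Leftarrow)$ direction on non-monomials (e.g.\ $\pi(x_{11}-x_{12})=0$), so the correct reading is on monomials --- which suffices, since every identity in (ii)--(vii) is an equality of monomial ideals and can be checked on monomial generators, and your alternate argument for (vii) via supports of minimal generators cleanly sidesteps the issue for primariness.
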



 The next proposition states  that a monomial ideal has the demotion  property if and only if its expansion  has the demotion  property.

\begin{proposition}\label{PRO.Expansion} 
Let  $J \subseteq I$ be monomial ideals in  $R=K[x_1, \ldots, x_n]$. Then $J$ is a demotion of $I$  if and only if  $J^*$ is a demotion of $I^*$, 
where $I^*$ denotes the  expansion of $I$.  
\end{proposition}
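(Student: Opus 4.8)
The plan is to use Lemma~\ref{Lem.Bayati.Expansion}, which already provides that expansion commutes with sums, products, intersections, and powers of monomial ideals, so that the demotion equality transfers verbatim in both directions. First I would record the trivial observation that $J \subseteq I$ if and only if $J^* \subseteq I^*$; this follows from part (ii) of Lemma~\ref{Lem.Bayati.Expansion} applied to $I = I + J$, or alternatively from part (i) together with the fact that $\pi$ is surjective on monomials. Fix $r,s \geq 0$ throughout.

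For the forward direction, assume $J$ is a demotion of $I$, so $I^r J^s = I^{r+s} \cap J^s$. Applying the expansion operation to both sides gives $(I^r J^s)^* = (I^{r+s} \cap J^s)^*$. By part (iii) of Lemma~\ref{Lem.Bayati.Expansion} (iterated, or combined with the evident fact $(I^k)^* = (I^*)^k$, which itself follows by induction from (iii)), the left side equals $(I^*)^r (J^*)^s$. By part (iv) of the same lemma the right side equals $(I^{r+s})^* \cap (J^s)^* = (I^*)^{r+s} \cap (J^*)^s$. Hence $(I^*)^r (J^*)^s = (I^*)^{r+s} \cap (J^*)^s$ for all $r,s \geq 0$, which says exactly that $J^*$ is a demotion of $I^*$.

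For the converse, suppose $J^*$ is a demotion of $I^*$; I want to recover the equality downstairs. The cleanest route is to use part (i) of Lemma~\ref{Lem.Bayati.Expansion}: a monomial $f \in R$ lies in a monomial ideal $L \subseteq R$ if and only if (viewing $f$ appropriately) $f \in L^*$, since $\pi$ is a retraction. Concretely, to show $I^{r+s} \cap J^s \subseteq I^r J^s$, take a monomial $u$ in the left side; then $u^* $ — more precisely any preimage, or the image under the natural inclusion — lies in $(I^{r+s})^* \cap (J^s)^* = (I^*)^{r+s} \cap (J^*)^s = (I^*)^r (J^*)^s = (I^r J^s)^*$, where the middle equality is the hypothesis and the outer ones are parts (iii),(iv) as above; then part (i) gives $u \in I^r J^s$. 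The reverse containment $I^r J^s \subseteq I^{r+s} \cap J^s$ is automatic. Thus $I^r J^s = I^{r+s} \cap J^s$ for all $r,s \geq 0$, so $J$ is a demotion of $I$.

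I do not expect a genuine obstacle here: the entire argument is a formal consequence of the lemma's compatibility properties, and the only point requiring a sentence of care is the passage between $R$ and $R^*$ in the converse — one should invoke part (i) of Lemma~\ref{Lem.Bayati.Expansion} (the characterization via $\pi$) rather than pretend the two rings are identified, and note that $(I^k)^* = (I^*)^k$ is obtained by a one-line induction from part (iii). If one prefers, the converse can be deduced even more cheaply from Proposition~\ref{Pro.General.2}(iii) by observing that $R^*$ is faithfully flat (indeed free) over $R$ and that the expansion $I^*$ agrees with the extension $I R^*$ up to the identification of generators; but the self-contained argument through Lemma~\ref{Lem.Bayati.Expansion} is shorter and keeps everything in the monomial category.
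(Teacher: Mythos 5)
Your proof is correct and takes essentially the same approach as the paper: the forward direction is identical (apply Lemma~\ref{Lem.Bayati.Expansion}(iii)--(iv) to transfer the equality to $R^*$), and for the converse the paper invokes the fact that $L_1 = L_2$ iff $L_1^* = L_2^*$, which is exactly what you establish explicitly via part~(i) of the lemma. Your side remark about the faithfully flat route through Proposition~\ref{Pro.General.2}(iii) is a valid alternative but not what the paper does.
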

\begin{proof}
 Suppose that  $J$ is   a demotion of $I$, and fix $r,s\geq 0$. Then we have $I^rJ^s=I^{r+s} \cap J^s$, and so   $(I^rJ^s)^*=(I^{r+s} \cap J^s)^*$. We also  deduce from Lemma \ref{Lem.Bayati.Expansion}(ii) that $J^* \subseteq I^*$.  In light of parts (iii) and (iv) of   Lemma \ref{Lem.Bayati.Expansion}, one can derive   that $(I^*)^r(J^*)^s=(I^*)^{r+s} \cap (J^*)^s$. Accordingly,  $J^*$ is a demotion of $I^*$,  as required. 
  
 The converse can be shown  by a similar argument and recalling this fact that for any two monomial  ideals $L_1, L_2$, we always  have $L_1=L_2$ if and only if 
 $L^*_1=L^*_2$.
 \end{proof}


\subsection{Demotions of  monomial  ideals under weighting} $\newline$

This subsection is devoted to analyzing the demotion property in the context of the weighting operation. For this purpose, we first revisit the relevant definition.

\begin{definition}
\em{
A \textit{weight} over a polynomial ring $R=K[x_1, \ldots, x_n]$ is a function $W: \{x_1, \ldots, x_n\} \rightarrow \mathbb{N}$ such that  $w_i=W(x_i)$ is called the {\it weight} of the variable $x_i$. For a monomial ideal $I\subset R $ and a weight $W$, we define the \textit{weighted ideal}, denoted 
by $I_W$, to be the ideal generated by $\{h(u) : u\in \mathcal{G}(I) \}$, where $h$ is the unique homomorphism $h: R \rightarrow R$ given by 
$h(x_i)= x_i^{w_i}$. 
}
\end{definition}


As an example,  let  $R = K[x_1, x_2, x_3, x_4]$ and consider the monomial ideal $I = (x_1^2x_2,\; x_2^3x_3^2,\; x_1x_3x_4^2) \subset R.$ 
 Also, let $W : \{x_1, x_2, x_3, x_4\} \rightarrow \mathbb{N}$  be a weight over $R$ defined by 
$W(x_1)=2,   W(x_2)=3,  W(x_3)=1,$ and $W(x_4)=4.$  Then, the weighted ideal $I_W$ is given by 
$I_W = (x_1^4x_2^3,\; x_2^9x_3^2,\; x_1^2x_3x_4^8).$

\bigskip
The proof of  Proposition \ref{PRO.Weighting} relies on the application of the following lemma.  

\begin{lemma}\label{LEM. Weighted} (\cite[Lemma 3.5]{SN})
Let $I$ and $J$ be two monomial ideals of a polynomial ring $R=K[x_1, \ldots, x_n]$, and $W$ a weight over $R$. Then the following statements hold. 
\begin {itemize}
\item[(i)] $(I+J)_W= I_W + J_W$;
\item[(ii)] $(IJ)_W= I_W J_W$;
\item[(iii)] $(I\cap J)_W= I_W \cap J_W$;
\item[(iv)] $(I :_RJ)_W= (I_W :_R J_W)$.
\end{itemize}
\end{lemma}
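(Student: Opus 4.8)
Since every ideal appearing in (i)--(iv) is monomial, it suffices to verify each equality on monomials, and for this I would argue directly with exponent vectors. Write $h\colon R\to R$ for the $K$-algebra homomorphism $h(x_i)=x_i^{w_i}$ defining the weighting, so that $I_W$ is the ideal generated by $\{h(u):u\in\mathcal{G}(I)\}$ --- equivalently $I_W$ is the extension $I^e$ of $I$ along $h$, since if $f=\sum m_k u_k$ with $u_k\in\mathcal{G}(I)$ then $h(f)=\sum h(m_k)h(u_k)\in I_W$. Assuming each weight $w_i$ is positive (the case $w_i=0$, if permitted, reduces weighting in that variable to the substitution $x_i\mapsto 1$, already covered by Lemma~\ref{LEM.Localization}), I would introduce the de-weighting map $v=\prod_i x_i^{b_i}\mapsto \lfloor v\rfloor_W:=\prod_i x_i^{\lfloor b_i/w_i\rfloor}$ on monomials. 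Two elementary facts then drive the whole proof: (a) for a monomial $v$ and a monomial ideal $L$ one has $v\in L_W$ if and only if $\lfloor v\rfloor_W\in L$, because $h(u)\mid v$ holds exactly when $w_i\deg_{x_i}(u)\le b_i$ for all $i$, i.e.\ exactly when $u\mid\lfloor v\rfloor_W$; and (b) for monomials $v,u$ one has $\lfloor v\,h(u)\rfloor_W=\lfloor v\rfloor_W\cdot u$, since $\lfloor (b_i+w_i\deg_{x_i}u)/w_i\rfloor=\lfloor b_i/w_i\rfloor+\deg_{x_i}u$.

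Granting (a) and (b), parts (i)--(iii) are immediate. For (ii) one does not even need the de-weighting map: $IJ$ is generated by the products $uv$ with $u\in\mathcal{G}(I)$, $v\in\mathcal{G}(J)$, so $(IJ)_W$ is generated by the monomials $h(u)h(v)=h(uv)$, and the same set generates $I_WJ_W$. For (i) and (iii) I would combine (a) with the standard fact that a monomial lies in $I+J$ (respectively $I\cap J$) exactly when it lies in $I$ or in $J$ (respectively in both): a monomial $v$ lies in $(I+J)_W$ iff $\lfloor v\rfloor_W\in I+J$ iff $\lfloor v\rfloor_W\in I$ or $\lfloor v\rfloor_W\in J$ iff $v\in I_W$ or $v\in J_W$ iff $v\in I_W+J_W$, and the case of $\cap$ is identical.

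The substantive step is (iv), and that is where I expect the only real care to be required. Both $(I:_RJ)_W$ and $(I_W:_RJ_W)$ are monomial ideals, so I again test equality on a monomial $v$. For $(I:_RJ)_W\subseteq(I_W:_RJ_W)$: if $\lfloor v\rfloor_W\in I:J$, then $\lfloor v\rfloor_W\cdot w\in I$ for every $w\in\mathcal{G}(J)$, hence by (b) $\lfloor v\,h(w)\rfloor_W\in I$, hence by (a) $v\,h(w)\in I_W$; since the $h(w)$ with $w\in\mathcal{G}(J)$ generate $J_W$, this yields $v\,J_W\subseteq I_W$. The reverse inclusion is symmetric: if $v\,J_W\subseteq I_W$, then in particular $v\,h(w)\in I_W$ for each $w\in\mathcal{G}(J)$, so $\lfloor v\rfloor_W\cdot w=\lfloor v\,h(w)\rfloor_W\in I$ by (a) and (b), and letting $w$ range over $\mathcal{G}(J)$ gives $\lfloor v\rfloor_W\in I:J$. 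The one subtlety worth spelling out is the reduction to generators on both sides --- that it is enough to test the colon $I:J$ against $\mathcal{G}(J)$, and membership in $I_W:J_W$ against the generators $h(w)$ of $J_W$.

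I would also record a slicker alternative that avoids all the bookkeeping: the map $h\colon R\to R$, $x_i\mapsto x_i^{w_i}$, identifies $R$ with the subring $K[x_1^{w_1},\dots,x_n^{w_n}]$, over which $R$ is free (a basis being the monomials $\prod_i x_i^{c_i}$ with $0\le c_i<w_i$), so $h$ is faithfully flat; as $I_W=I^e$, statements (i)--(iv) then fall out of the standard behaviour of ideal extension along a flat ring map --- extension always commutes with sums and products, with finite intersections when the map is flat, and with colons by finitely generated ideals since $R$ is Noetherian. On either route, (iv) is the only step that is not purely formal, so that is the part I would write out carefully, keeping (i)--(iii) brief.
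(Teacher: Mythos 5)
The paper does not contain a proof of this lemma; it simply cites \cite[Lemma~3.5]{SN} and uses the statement as a black box, so there is no in-paper argument to compare yours against. On its own terms your proof is correct: facts (a) and (b) are exactly right, they do carry the whole argument for (i), (iii), and (iv), and the faithfully-flat alternative is also sound since $h$ presents $R$ as a free module over its image $K[x_1^{w_1},\ldots,x_n^{w_n}]$ with basis $\{\prod_i x_i^{c_i} : 0\le c_i<w_i\}$, so that ideal extension along $h$ commutes with sums, products, finite intersections, and colons by finitely generated ideals. The one small point worth making explicit is in (ii): by definition $(IJ)_W$ is generated by $\{h(m):m\in\mathcal{G}(IJ)\}$, whereas you work with the possibly larger set $\{h(uv):u\in\mathcal{G}(I),\,v\in\mathcal{G}(J)\}$; these two sets generate the same ideal because $h$ preserves divisibility of monomials (if $m\mid m'$ then $h(m)\mid h(m')$), which is the same observation you already use implicitly to prove (a). With that sentence added the argument is complete.
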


 
 A monomial ideal has the demotion property if and only if its weighted ideal does, as stated in the following proposition.

\begin{proposition}\label{PRO.Weighting}
Let $I \subset R=K[x_1, \ldots, x_n]$  be a  monomial ideal,  and $W$ a weight over $R$.   Then $J$ is a demotion of $I$  if and only if 
 $J_W$ is a demotion of $I_W$, where $I_W$ denotes the weighted ideal.  
\end{proposition}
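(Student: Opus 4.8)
The plan is to mimic exactly the template already used for the expansion, localization, and deletion operations (Propositions~\ref{PRO.Expansion}, \ref{PRO.Localization}, \ref{PRO.Deletion}): the weighting operation commutes with sums, products, intersections, and powers, so the defining equality of a demotion is transported along it in both directions. The key tool is Lemma~\ref{LEM. Weighted}, whose parts (ii) and (iii) give compatibility with products and intersections; compatibility with powers follows from part (ii) by an easy induction, i.e. $(I^k)_W = (I_W)^k$ for all $k \geq 0$.

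First I would fix $r,s \geq 0$ and record that $J \subseteq I$ implies $J_W \subseteq I_W$ (this follows since $J+I = I$ gives $J_W + I_W = (J+I)_W = I_W$ by Lemma~\ref{LEM. Weighted}(i), hence $J_W \subseteq I_W$). Then, assuming $J$ is a demotion of $I$, we have $I^r J^s = I^{r+s} \cap J^s$. Applying the weighting operation to both sides and using Lemma~\ref{LEM. Weighted}(ii),(iii) together with the power formula $(L^k)_W = (L_W)^k$, the left side becomes $(I^r J^s)_W = (I^r)_W (J^s)_W = (I_W)^r (J_W)^s$, and the right side becomes $(I^{r+s} \cap J^s)_W = (I^{r+s})_W \cap (J^s)_W = (I_W)^{r+s} \cap (J_W)^s$. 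Hence $(I_W)^r (J_W)^s = (I_W)^{r+s} \cap (J_W)^s$ for all $r,s \geq 0$, so $J_W$ is a demotion of $I_W$.

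For the converse I would use the fact that the homomorphism $h: R \to R$ with $h(x_i) = x_i^{w_i}$ is injective on monomials (since each $w_i \geq 1$ and $K$ is a field, $R$ is a domain), so two monomial ideals $L_1, L_2$ satisfy $L_1 = L_2$ if and only if $(L_1)_W = (L_2)_W$ — this is the weighting analogue of the cancellation fact invoked at the end of the proof of Proposition~\ref{PRO.Expansion}. Given that $J_W$ is a demotion of $I_W$, the equality $(I_W)^r (J_W)^s = (I_W)^{r+s} \cap (J_W)^s$ rewrites, via the same compatibilities, as $(I^r J^s)_W = (I^{r+s} \cap J^s)_W$, and the injectivity of weighting on monomial ideals forces $I^r J^s = I^{r+s} \cap J^s$; since $r,s$ were arbitrary, $J$ is a demotion of $I$.

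The only step requiring any care — and thus the "main obstacle," though it is minor — is justifying $(L^k)_W = (L_W)^k$ and the injectivity statement $L_1 = L_2 \iff (L_1)_W = (L_2)_W$; the first is a one-line induction from Lemma~\ref{LEM. Weighted}(ii), and the second can either be cited in the spirit of the analogous facts used for expansion or proved directly by noting that $u \mapsto h(u)$ is an injection from monomials to monomials, so it induces an injection on the sets of monomials belonging to a monomial ideal. Everything else is a routine transcription of the expansion-operation proof with $(-)^*$ replaced by $(-)_W$ and Lemma~\ref{Lem.Bayati.Expansion} replaced by Lemma~\ref{LEM. Weighted}.
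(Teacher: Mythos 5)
Your proof follows the paper's own argument essentially verbatim: the forward direction applies the weighting operation to the demotion equality $I^rJ^s = I^{r+s}\cap J^s$ and rewrites both sides using Lemma~\ref{LEM. Weighted}(ii),(iii), while the converse invokes the cancellation fact that $L_1 = L_2$ if and only if $(L_1)_W = (L_2)_W$, exactly as the paper does. The only difference is cosmetic: you explicitly spell out the induction giving $(L^k)_W = (L_W)^k$ and the injectivity argument behind the cancellation fact, both of which the paper leaves implicit.
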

\begin{proof}
Assume that   $J$ is   a demotion of $I$, and fix $r,s\geq 0$. We thus get  $I^rJ^s=I^{r+s} \cap J^s$, and hence   $(I^rJ^s)_W=(I^{r+s} \cap J^s)_W$. 
On account of  Lemma  \ref{LEM. Weighted}(i), we obtain   $J_W \subseteq I_W$.  We can immediately conclude from parts (ii) and (iii) of 
 Lemma  \ref{LEM. Weighted} that   $(I_W)^r(J_W)^s=(I_W)^{r+s} \cap (J_W)^s$. Therefore,  $J_W$ is a demotion of $I_W$, as claimed. 
  
By remembering  this fact that for any two monomial  ideals $L_1, L_2$, we always have $L_1=L_2$ if and only if 
 $(L_1)_W=(L_2)_W$, a similar discussion can be used to  establish the converse.
\end{proof} 


As a consequence of  Propositions \ref{PRO.Expansion} and \ref{PRO.Weighting}, the theorem below ensures that there exist infinitely many monomial ideals in 
 $R=K[x_1, \ldots, x_n]$  such that  have  proper demotion ideals.

\begin{theorem} \label{Th.Infinite}
Let $R=K[x_1, \ldots, x_n]$ be  a polynomial ring in $n$  variables, where $n\geq 2$,  with coefficients in a  field $K$. Then 
there exist infinitely many  monomial ideals  in $R$ such that  have  proper demotion ideals. 
\end{theorem}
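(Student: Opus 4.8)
The plan is to produce an explicit infinite family and invoke the operations already developed in this section. The simplest seed is the principal monomial ideal $I = (x_1)$ in $R = K[x_1,\dots,x_n]$ with $n \geq 2$: by Proposition \ref{Demotion-2}, since $\operatorname{supp}(x_1) = \{1\} \subsetneq \{1,\dots,n\}$, the ideal $I$ admits a proper demotion, for instance $J = (x_1 x_2)$. So one demotion pair exists; the task is to manufacture infinitely many \emph{distinct} ones.

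First I would fix this pair $J \subsetneq I$ and apply the weighting operation. For each weight $W$ over $R$ one obtains, by Proposition \ref{PRO.Weighting}, that $J_W$ is a (proper, since $J_W \subseteq I_W$ and the two differ because $J \neq I$ and weighting is injective on monomials) demotion of $I_W$. Taking the family of weights $W_k$ defined by $W_k(x_1) = k$ and $W_k(x_i) = 1$ for $i \geq 2$, one gets $I_{W_k} = (x_1^k)$ and $J_{W_k} = (x_1^k x_2)$, and these ideals are pairwise distinct as $k$ ranges over $\mathbb{N}$. Hence $\{(x_1^k x_2) \subsetneq (x_1^k)\}_{k \geq 1}$ is an infinite family of monomial ideals in $R$, each having a proper demotion. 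Alternatively, one could feed the seed through the expansion operation (Proposition \ref{PRO.Expansion}) with the $n$-tuples $(k,1,\dots,1)$ to get the same effect, but the weighting route keeps everything inside the original ring $R$, which is what the statement demands.

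The step requiring the most care is verifying that the family really consists of infinitely many genuinely different ideals with the \emph{proper} demotion property — one must check that $J_{W_k} \neq I_{W_k}$ for each $k$ (immediate, since $x_1^k \notin (x_1^k x_2)$) and that distinct $k$ give distinct ideals $I_{W_k} = (x_1^k)$. Both are elementary. A minor subtlety is that Proposition \ref{PRO.Weighting} as stated presupposes $J \subseteq I$; this holds for our seed and is preserved under weighting by Lemma \ref{LEM. Weighted}(i), so the hypothesis is met at every stage. I do not anticipate a real obstacle: the content is entirely carried by the transfer results proved earlier in the section, and the proof amounts to exhibiting the seed and naming the family.
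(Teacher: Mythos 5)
Your proof is correct, and it takes a modestly different—and slightly leaner—route than the paper's. The paper starts in the two-variable ring $K[x,y]$ with $I=(x)$ and $J=(xy)$, verifies the demotion property by a direct computation of $I^rJ^s$ and $I^{r+s}\cap J^s$, then transfers to $R=K[x_1,\dots,x_n]$ via the expansion operation (Proposition~\ref{PRO.Expansion}), and finally applies weighting (Proposition~\ref{PRO.Weighting}) with all weights $\ge 1$ to generate an infinite family. You instead work inside $R$ from the outset, obtain the seed pair $(x_1x_2)\subsetneq(x_1)$ as an instance of the characterization in Proposition~\ref{Demotion-2}, and produce the infinite family $\{(x_1^kx_2)\subsetneq(x_1^k)\}_{k\ge1}$ purely by weighting. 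This avoids both the expansion step and the hand computation, at the cost of relying on the (already proved) Proposition~\ref{Demotion-2}. One further simplification worth noting: since each $(x_1^k)$ is itself a principal monomial ideal with $\operatorname{supp}(x_1^k)=\{x_1\}\subsetneq\{x_1,\dots,x_n\}$, Proposition~\ref{Demotion-2} applies to it directly, so you could dispense with the weighting transfer entirely and simply cite that proposition once for each $k$. Your care about properness under weighting is correct and matches the paper's own restriction to weights $\ge1$: with positive weights the substitution $x_i\mapsto x_i^{w_i}$ is injective on monomials, so $J\subsetneq I$ forces $J_W\subsetneq I_W$.
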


\begin{proof}
Let $S=K[x,y]$, and consider the monomial  ideals $I=(x)$ and $J=(xy)$ is $S$. Clearly $J \subsetneq I$. We claim   $J$ is a demotion of $I$. Fix $r,s\geq 0$. It is easy to check that  $I^rJ^s=(x^r)(x^sy^s)=(x^{r+s}y^s).$  On the other hand,
$$I^{r+s}\cap J^s=(x^{\,r+s})\cap (x^sy^s)= (\mathrm{lcm}(x^{r+s},x^sy^s))=(x^{r+s}y^s).$$
Thus, $I^rJ^s = I^{r+s}\cap J^s$, and so   $J$ is a proper demotion of $I$.
 Now, let $\mathfrak{p}_1:=(x_{i_1}, \ldots, x_{i_a})$ and  $\mathfrak{p}_2:=(x_{i_{a+1}}, \ldots, x_{i_b})$  with  $\mathrm{supp}(\mathfrak{p}_1) \cap  \mathrm{supp}(\mathfrak{p}_2)=\emptyset$ and $\mathrm{supp}(\mathfrak{p}_1) \cup \mathrm{supp}(\mathfrak{p}_2)=\{x_1, \ldots, x_n\}$. 
 One can promptly deduce from  Proposition  \ref{PRO.Expansion} that $J^*$ is a demotion of $I^*$ in $R=K[x_1, \ldots, x_n]$. Now, assume that  $W$ is 
 a weight over $R=K[x_1, \ldots, x_n]$  with $W(x_i)=\alpha_i$ such that $\alpha_i \geq 1$ for all $i=1, \ldots, n$. In  light of  Proposition  \ref{PRO.Weighting}, we obtain   $(J^*)_W$ is a demotion of $(I^*)_W$. This indicates that there exist infinitely many monomial ideals in $R=K[x_1, \ldots, x_n]$  
 such that have proper demotion ideals, as claimed.  
  \end{proof}


\section{Demotions of monomial ideals and normally torsion-freeness} \label{Section 6}

 The primary aim of this section is to investigate the relationship between demotions of monomial ideals and normally torsion-freeness. 
 In particular, we demonstrate that the demotion property can be used to construct new normally torsion-free monomial ideals, 
   and conversely, that normally torsion-free ideals can inform the generation of ideals with the demotion property.
 To do this, we commence with the following proposition.

  \begin{proposition} \label{Demotion-3}
  Let $I\subset R=K[x_1, \ldots, x_n]$  be a  square-free monomial ideal  and  $\mathfrak{q}$ be  a prime monomial ideal in $R$ such that $\bigcap_{\mathfrak{p}\in \mathrm{Ass}(I)}\mathfrak{p} \cap \mathfrak{q}$  is  a minimal primary decomposition of  
$I \cap \mathfrak{q}$.   Let  $I$ and $I \cap \mathfrak{q}$ be  normally torsion-free.  
Then $I\cap \mathfrak{q}$ is a demotion of $I$. 
  \end{proposition}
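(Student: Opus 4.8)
The plan is to reduce the desired identity to Lemma~\ref{NCH-1}, which already supplies the essential algebraic input. Set $J:=I\cap\mathfrak{q}$; since $J\subseteq I$, the inclusion $I^{r}J^{s}\subseteq I^{r+s}\cap J^{s}$ holds for all $r,s\ge 0$ by the elementary containment recorded in the introduction, so it suffices to establish the reverse inclusion $I^{r+s}\cap J^{s}\subseteq I^{r}J^{s}$.

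First I would handle the degenerate cases separately, since Lemma~\ref{NCH-1} is only stated for exponents $\ge 1$. If $s=0$, both sides equal $I^{r}$. If $r=0$ and $s\ge 1$, the right-hand side $I^{s}\cap J^{s}$ collapses to $J^{s}$ because $J=I\cap\mathfrak{q}\subseteq I$ forces $J^{s}\subseteq I^{s}$; this matches the left-hand side $J^{s}$. It thus remains to treat $r,s\ge 1$.

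For $r,s\ge 1$, the hypotheses of Lemma~\ref{NCH-1} are precisely those assumed here ($I$ square-free, $\mathfrak{q}$ a monomial prime, $\bigcap_{\mathfrak{p}\in\mathrm{Ass}(I)}\mathfrak{p}\cap\mathfrak{q}$ a minimal primary decomposition of $I\cap\mathfrak{q}$, and $I$ and $I\cap\mathfrak{q}$ both normally torsion-free), so the lemma gives $I^{r}J^{s}=I^{r+s}\cap\mathfrak{q}^{s}$. Then I would chain inclusions: from $J^{s}=(I\cap\mathfrak{q})^{s}\subseteq\mathfrak{q}^{s}$ we obtain $I^{r+s}\cap J^{s}\subseteq I^{r+s}\cap\mathfrak{q}^{s}=I^{r}J^{s}$, and combining this with the trivial containment yields $I^{r}J^{s}=I^{r+s}\cap J^{s}$, i.e.\ $J$ is a demotion of $I$.

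I do not anticipate a genuine obstacle: the mathematical weight is carried entirely by Lemma~\ref{NCH-1}, and what remains is bookkeeping. The only points needing a moment's care are verifying that Lemma~\ref{NCH-1} applies verbatim under the stated hypotheses, and covering the boundary values $r=0$ and $s=0$, which lie outside the range of that lemma but are immediate by hand.
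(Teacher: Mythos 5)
Your proof is correct and follows the same essential route as the paper's: both hinge on Lemma~\ref{NCH-1}, which identifies $I^{r+s}\cap\mathfrak{q}^{s}$ with $I^{r}J^{s}$. The paper reaches the same endpoint more laboriously, first expanding $I^{r+s}$, $I^{r}$, and $J^{s}$ into intersections of prime powers via Facts~\ref{proposition 4.3.29}, \ref{exercise 6.1.25} and Theorem~\ref{Villarreal1}, then simplifying $I^{r+s}\cap J^{s}$ to $I^{r+s}\cap\mathfrak{q}^{s}$ using those decompositions. Your shortcut --- noting only that $J^{s}\subseteq\mathfrak{q}^{s}$, hence $I^{r+s}\cap J^{s}\subseteq I^{r+s}\cap\mathfrak{q}^{s}=I^{r}J^{s}$, with the reverse containment automatic --- avoids the primary-decomposition detour entirely and is cleaner. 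You are also right to separate out the boundary cases $r=0$ and $s=0$: the paper fixes arbitrary $r,s\ge 0$ and then cites Lemma~\ref{NCH-1}, which is stated only for exponents $\ge 1$, so your explicit handling of these cases closes a small gap in the cited argument.
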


\begin{proof}
Set $J:=I\cap \mathfrak{q}$. We must show that $I^rJ^s=I^{r+s} \cap J^s$ for all $r,s\geq 0$. To do this, fix $r,s\geq 0$. Assume that 
$\mathrm{Ass}(I)=\{\mathfrak{p}_1, \ldots, \mathfrak{p}_t\}$. Then  $I=\bigcap_{i=1}^t\mathfrak{p}_i$ 
and $J=\bigcap_{i=1}^{t}\mathfrak{p}_i\cap \mathfrak{q}$,  and hence  $J\subseteq I$. In addition, 
  Facts \ref{proposition 4.3.29}, \ref{exercise 6.1.25} and Theorem \ref{Villarreal1} yield that 
 $I^{r+s}=\bigcap_{i=1}^{t}{\mathfrak{p}^{r+s}_i}$, $I^r=\bigcap_{i=1}^{t}{\mathfrak{p}^r_i}$, and  $J^s=\bigcap_{i=1}^{t}{\mathfrak{p}^s_i}\cap \mathfrak{q}^s$. Here, by using Lemma   \ref{NCH-1}, we obtain the following equalities
 \begin{align*}  
I^{r+s} \cap J^s=& \left(\bigcap_{i=1}^{t}\mathfrak{p}_i^{r+s}\right) \cap  \left(\bigcap_{i=1}^{t}\mathfrak{p}_i^{s}\cap    \mathfrak{q}^{s}\right)\\
=& \left(\bigcap_{i=1}^{t}\mathfrak{p}_i^{r+s}\right) \cap     \mathfrak{q}^{s}\\
=& \left(\bigcap_{i=1}^{t}\mathfrak{p}_i^{r}\right)  \left(\bigcap_{i=1}^{t}\mathfrak{p}_i^{s}  \cap \mathfrak{q}^{s}\right)=I^rJ^s.
 \end{align*}
This gives that $J$ is a demotion of $I$, as claimed. 
\end{proof}
  

\begin{theorem} \label{NTT-1}
Suppose that  $I$ and $J$  are  normally torsion-free square-free monomial ideals in $R=K[x_1, \ldots, x_n]$ satisfying the following
\begin{itemize}
\item[(i)] $J$ is a proper demotion of $I$;
\item[(ii)] if $\mathfrak{q} \in \mathrm{Ass}(R/J)\setminus \mathrm{Ass}(R/I)$, then  $\mathfrak{q} \nsubseteq \mathfrak{p}$ for every 
$\mathfrak{p}\in \mathrm{Ass}(R/I).$ 
\end{itemize}
 Then, for all $r,s \geq 1$, $I^rJ^s$ is normally torsion-free. 
\end{theorem}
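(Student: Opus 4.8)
The plan is to determine the set $\mathrm{Ass}_R\big(R/(I^rJ^s)^k\big)$ explicitly for every $k\geq 1$ and to verify that it does not depend on $k$, which is exactly the assertion that $I^rJ^s$ is normally torsion-free. Write $A:=\mathrm{Ass}_R(R/I)$ and $B:=\mathrm{Ass}_R(R/J)$. Since $I$ and $J$ are square-free they are radical, so $A=\mathrm{Min}(I)$ and $B=\mathrm{Min}(J)$; in particular the primes in $A$ (respectively in $B$) are pairwise incomparable. Because $I$ and $J$ are normally torsion-free and square-free (hence without embedded primes), Facts \ref{proposition 4.3.29} and \ref{exercise 6.1.25} yield, for every integer $m\geq 0$,
\[
I^m=I^{(m)}=\bigcap_{\mathfrak{p}\in A}\mathfrak{p}^{m}
\qquad\text{and}\qquad
J^m=J^{(m)}=\bigcap_{\mathfrak{q}\in B}\mathfrak{q}^{m}.
\]
Recall also that a power of a monomial prime ideal is again primary with the same radical, so these intersections are primary decompositions.

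I would then feed in the demotion hypothesis. Since $(I^rJ^s)^k=I^{rk}J^{sk}$ and $J$ is a demotion of $I$, the defining identity applied with the nonnegative exponents $rk$ and $sk$ gives $(I^rJ^s)^k=I^{rk+sk}\cap J^{sk}$, and hence
\[
(I^rJ^s)^k=\Big(\bigcap_{\mathfrak{p}\in A}\mathfrak{p}^{rk+sk}\Big)\cap\Big(\bigcap_{\mathfrak{q}\in B}\mathfrak{q}^{sk}\Big).
\]
Now hypothesis (ii) is used to control the radicals. First I claim $A\subseteq B$: for $\mathfrak{p}\in A$ we have $J\subseteq I\subseteq\mathfrak{p}$, so $\mathfrak{q}\subseteq\mathfrak{p}$ for some $\mathfrak{q}\in B$; by (ii) such a $\mathfrak{q}$ cannot lie in $B\setminus A$, so $\mathfrak{q}\in A$, and since the minimal primes of $I$ are pairwise incomparable we obtain $\mathfrak{q}=\mathfrak{p}$, i.e. $\mathfrak{p}\in B$. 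Thus $A\cup B=B$, so every radical appearing in the displayed decomposition lies in $B$, and these radicals are pairwise incomparable. For $\mathfrak{p}\in A\subseteq B$, the two $\mathfrak{p}$-primary factors $\mathfrak{p}^{rk+sk}$ and $\mathfrak{p}^{sk}$ satisfy $\mathfrak{p}^{rk+sk}\subseteq\mathfrak{p}^{sk}$, so the latter is absorbed and
\[
(I^rJ^s)^k=\Big(\bigcap_{\mathfrak{p}\in A}\mathfrak{p}^{rk+sk}\Big)\cap\Big(\bigcap_{\mathfrak{q}\in B\setminus A}\mathfrak{q}^{sk}\Big).
\]
The radicals of this primary decomposition are exactly the elements of $A\cup(B\setminus A)=B$; being pairwise incomparable, the decomposition is irredundant, whence $\mathrm{Ass}_R\big(R/(I^rJ^s)^k\big)=B$ for every $k\geq 1$.

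In particular, taking $k=1$ gives $\mathrm{Ass}_R(R/I^rJ^s)=B$, so $\mathrm{Ass}_R\big(R/(I^rJ^s)^k\big)=\mathrm{Ass}_R(R/I^rJ^s)$ for all $k\geq 1$; that is, $I^rJ^s$ is normally torsion-free. I expect the main obstacle to be the step that converts the raw intersection of primary powers into an \emph{irredundant} primary decomposition with a $k$-independent set of radicals: one must argue that the components contributed by the inclusion $A\subseteq B$ are genuinely absorbed, and that hypothesis (ii) rules out any comparability between an old prime (in $A$) and a new prime (in $B\setminus A$) — otherwise reducing the decomposition could create a dependence of the associated primes on $k$ and destroy the normally torsion-free property.
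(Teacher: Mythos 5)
Your proof is correct and follows the same overall strategy as the paper: reduce $(I^rJ^s)^k$ to $I^{rk+sk}\cap J^{sk}$ via the demotion identity, expand each factor into prime powers using normal torsion-freeness and Facts \ref{proposition 4.3.29}, \ref{exercise 6.1.25}, and then argue that the resulting primary decomposition has a $k$-independent, irredundant set of radicals. The genuine difference lies in how irredundancy is established. The paper verifies irredundancy directly by a case analysis: it checks separately that no $\mathfrak{q}_j^s$ and no $\mathfrak{p}_j^{r+s}$ is superfluous, picking explicit witness monomials and invoking the primaryness of $\mathfrak{p}^{r+s}$. You instead extract from hypothesis (ii), together with $J\subseteq I$ and the pairwise incomparability of $\mathrm{Min}(I)$, the inclusion $\mathrm{Min}(I)\subseteq\mathrm{Min}(J)$ --- an observation the paper never records explicitly. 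Once that is in hand, the radicals remaining after absorbing $\mathfrak{p}^{sk}$ into $\mathfrak{p}^{rk+sk}$ for $\mathfrak{p}\in\mathrm{Min}(I)$ are exactly the elements of $\mathrm{Min}(J)$, which are automatically pairwise incomparable because $J$ is square-free; and a primary decomposition with pairwise incomparable radicals is irredundant, since redundancy of one component would, after taking radicals, force a containment between two of those primes. This buys a shorter argument and a tidier description of the answer, namely $\mathrm{Ass}_R\big(R/(I^rJ^s)^k\big)=\mathrm{Ass}_R(R/J)$ for every $k\ge1$, which coincides with the paper's $\mathrm{Min}(I)\cup\Gamma$ precisely because $\mathrm{Min}(I)\subseteq\mathrm{Min}(J)$. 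The paper's element-level verification is more self-contained but noticeably more laborious.
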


\begin{proof}
Fix $r,s \geq 1$.  As  $J$ is a demotion of $I$, we obtain    $I^rJ^s=I^{r+s}\cap J^s$.  Let  $\mathrm{Ass}(R/I)=\{\mathfrak{p}_1, \ldots,   \mathfrak{p}_t\}$ and
 put $\Gamma:=\mathrm{Ass}(R/J)\setminus \mathrm{Ass}(R/I)$. 
 Let   $\mathrm{Ass}(R/J)=\{\mathfrak{q}_1, \ldots,   \mathfrak{q}_m, \mathfrak{q}_{m+1}, \ldots,   \mathfrak{q}_\ell\}$ and 
 $\Gamma=\{\mathfrak{q}_1, \ldots, \mathfrak{q}_m\}.$ 
In what follows, we claim that $\mathrm{Ass}(I^{r+s}\cap J^s)=\mathrm{Min}(I) \cup \Gamma$. 
We can conclude from   Facts \ref{proposition 4.3.29}, \ref{exercise 6.1.25} and Theorem \ref{Villarreal1}  that  $I^{r+s}=\bigcap_{i=1}^{t}{\mathfrak{p}^{r+s}_i}$ 
  and  $J^s=\bigcap_{i=1}^{\ell}\mathfrak{q}_i^s$. Let $\mathfrak{q}\in \mathrm{Ass}(R/J)\setminus  \Gamma$. Then $\mathfrak{q}\in \mathrm{Ass}(R/J)\cap  \mathrm{Ass}(R/I)$, 
  say $\mathfrak{q}=\mathfrak{p}_1$. This implies that $\mathfrak{p}_1^{r+s} \cap \mathfrak{q}^s=\mathfrak{p}_1^{r+s}$. This allows us to obtain  
   $I^{r+s} \cap J^s=\bigcap_{i=1}^{t}{\mathfrak{p}^{r+s}_i} \cap \bigcap_{i=1}^{m}\mathfrak{q}_i^s$. To show our claim, it is enough to establish 
  $\bigcap_{i=1}^{t}{\mathfrak{p}^{r+s}_i} \cap \bigcap_{i=1}^{m}\mathfrak{q}_i^s$ is a minimal primary decomposition of 
   $I^{r+s} \cap J^s$. To achieve this, we have to 
  prove the following statements:
  \begin{itemize}
  \item[(i)]  $\bigcap_{i=1}^{t}{\mathfrak{p}^{r+s}_i}\cap  \bigcap_{i=1,i\neq j}^{m}{\mathfrak{q}^{s}_i} \nsubseteq   \mathfrak{q}_j^s$ 
  for all $j=1, \ldots, m$. 
  \item[(ii)]  $\bigcap_{i=1,i\neq j}^{t}{\mathfrak{p}^{r+s}_i}\cap   \bigcap_{i=1}^{m}{\mathfrak{q}^{s}_i}    \nsubseteq \mathfrak{p}^{r+s}_j$
   for all $j=1, \ldots, t$. 
    \end{itemize}
  
  (i) On the contrary, assume  that  $\bigcap_{i=1}^{t}{\mathfrak{p}^{r+s}_i}\cap  \bigcap_{i=1,i\neq j}^{m}{\mathfrak{q}^{s}_i} \subseteq   \mathfrak{q}_j^s$ 
  for some $1\leq j \leq m$. Hence, we obtain  $\sqrt{\bigcap_{i=1}^{t}{\mathfrak{p}^{r+s}_i}\cap  \bigcap_{i=1,i\neq j}^{m}{\mathfrak{q}^{s}_i}} \subseteq  \sqrt{\mathfrak{q}_j^s}$. This leads to  $\bigcap_{i=1}^{t}{\mathfrak{p}_i}\cap  \bigcap_{i=1,i\neq j}^{m}{\mathfrak{q}_i} \subseteq   \mathfrak{q}_j$. 
  We thus get $\mathfrak{p}_\alpha \subseteq \mathfrak{q}_j$ for some $1\leq \alpha \leq t$, or 
  $\mathfrak{q}_\beta \subseteq \mathfrak{q}_j$ for some $1\leq \beta\neq j \leq m$.   We first assume that 
  $\mathfrak{q}_\beta \subseteq \mathfrak{q}_j$ for some $1\leq \beta\neq j \leq m$. 
   Due to  $J$ is a  square-free monomial ideal, this implies that
     $\mathrm{Ass}(J)=\mathrm{Min}(J)$, and hence  $\mathfrak{q}_\beta \nsubseteq \mathfrak{q}_j$ for every  $1\leq \beta\neq j \leq m$, a contradiction. 
     So, let   $\mathfrak{p}_\alpha \subseteq \mathfrak{q}_j$ for some $1\leq \alpha \leq t$.
    Since $J\subsetneq I \subseteq\mathfrak{p}_\alpha \subseteq \mathfrak{q}_j$ and $\mathfrak{q}_j\in \mathrm{Min}(J)$, we must have
 $\mathfrak{q}_j=\mathfrak{p}_\alpha$. This gives that $\mathfrak{q}_j \in \mathrm{Ass}(R/I)$, which is a contradiction.  Consequently, we have 
    $\bigcap_{i=1}^{t}{\mathfrak{p}^{r+s}_i}\cap  \bigcap_{i=1,i\neq j}^{m}{\mathfrak{q}^{s}_i} \nsubseteq   \mathfrak{q}_j^s$ 
  for all $j=1, \ldots, m$. 
  
  (ii)  Suppose, on the contrary, that  there exists some $1\leq j \leq t$ such that
   $\bigcap_{i=1,i\neq j}^{t}{\mathfrak{p}^{r+s}_i}\cap   \bigcap_{i=1}^{m}{\mathfrak{q}^{s}_i}   
  \subseteq \mathfrak{p}^{r+s}_j$. Without loss of generality,  assume that  $\bigcap_{i=1}^{t-1}{\mathfrak{p}^{r+s}_i}\cap 
  \bigcap_{i=1}^{m}{\mathfrak{q}^{s}_i}   \subseteq \mathfrak{p}^{r+s}_{t}$. 
  Since $\bigcap_{i=1}^{t}{\mathfrak{p}_i}$ is a minimal primary decomposition of $I$, we can derive that 
  $\bigcap_{i=1}^{t-1}{\mathfrak{p}_i}\nsubseteq \mathfrak{p}_{t}$. This yields that there exists some monomial 
  $u\in \bigcap_{i=1}^{t-1}{\mathfrak{p}_i} \setminus  \mathfrak{p}_{t}$. 
  As $(\bigcap_{i=1}^{t-1}{\mathfrak{p}_i})^{r+s} \subseteq \bigcap_{i=1}^{t-1}{\mathfrak{p}_i}^{r+s}$, we have    
  $u^{r+s}\in \bigcap_{i=1}^{t-1}{\mathfrak{p}^{r+s}_i} \setminus  \mathfrak{p}^{r+s}_{t}$. We here claim that 
  $\bigcap_{i=1}^{m}{\mathfrak{q}^{s}_i} \nsubseteq \mathfrak{p}^{r+s}_{t}$. 
  Otherwise, $\bigcap_{i=1}^{m}{\mathfrak{q}^{s}_i}  \subseteq \mathfrak{p}^{r+s}_{t}$. Hence, $\bigcap_{i=1}^{m}{\mathfrak{q}_i}  \subseteq \mathfrak{p}_{t}$. This implies that $\mathfrak{q}_z\subseteq \mathfrak{p}_t$ for some $1\leq z \leq m$. 
     Because $\mathfrak{q}_z\in \Gamma$, so we must have $\mathfrak{q}_z\nsubseteq \mathfrak{p}_t$, a contradiction. 
      Accordingly,   $\bigcap_{i=1}^{m}{\mathfrak{q}^{s}_i} \nsubseteq \mathfrak{p}^{r+s}_{t}$.  
     Let     $v\in \bigcap_{i=1}^{m}{\mathfrak{q}^{s}_i} \setminus \mathfrak{p}^{r+s}_{t}$. Since  $u^{r+s}\in \bigcap_{i=1}^{t-1}{\mathfrak{p}^{r+s}_i}$, this gives that     $u^{r+s}v\in \bigcap_{i=1}^{t-1}{\mathfrak{p}^{r+s}_i} \cap \bigcap_{i=1}^{m}{\mathfrak{q}^{s}_i}$, and so $u^{r+s}v\in \mathfrak{p}^{r+s}_{t}$. Based on     \cite[Corollary 6.1.8]{V1}, it is well-known that  $\mathfrak{p}^{r+s}_{t}$ is $\mathfrak{p}_{t}$-primary. It follows now from     $u^{r+s}v\in \mathfrak{p}^{r+s}_{t}$ that $v\in \mathfrak{p}^{r+s}_{t}$  or  $u^{r+s}\in \sqrt{\mathfrak{p}^{r+s}_{t}}$.
     Since  $v\notin \mathfrak{p}^{r+s}_{t}$, we must have $u^{r+s}\in \sqrt{\mathfrak{p}^{r+s}_{t}}$. 
     On account of $\sqrt{\mathfrak{p}^{r+s}_{t}}=\mathfrak{p}_{t}$ and $\mathfrak{p}_{t}$ is a monomial prime ideal, we get 
     $u\in \mathfrak{p}_{t}$, which is a contradiction. Therefore,  $\bigcap_{i=1}^{t}{\mathfrak{p}^{r+s}_i} \cap \bigcap_{i=1}^{m}{\mathfrak{q}^{s}_i}$ is a minimal primary decomposition of  $I^{r+s} \cap J^s$, and thus $\mathrm{Ass}(I^rJ^s)=\mathrm{Ass}(I^{r+s}\cap J^s)=\mathrm{Min}(I) \cup \Gamma$ 
     for all $r,s\geq 1$. To complete the proof, put $L:=I^rJ^s$ and let $\ell \geq 1$.  Because $L^\ell=I^{r\ell} J^{s\ell}$, the  argument above implies that 
     $\mathrm{Ass}(L^\ell)=\mathrm{Min}(I) \cup \Gamma=\mathrm{Ass}(L)$. Consequently, we can deduce that $L$ is normally torsion-free, as required.  
\end{proof}


Here, we present the next result from this section, which deals with the relation between  the demotion property and normally torsion-freeness. 

\begin{theorem}  \label{NTT-2}
Suppose that  $I$ and $J$  are    square-free monomial ideals in $R=K[x_1, \ldots, x_n]$ such that    $I=J+(x_\alpha x_\beta)$, where 
 $1\leq \alpha \neq \beta \leq n$. Then the following statements hold:
\begin{itemize}
\item[(i)] if $J$ is  normally torsion-free, then  $J$ is a proper demotion of $I$. 
\item[(ii)] if both $I$ and  $J$ are   normally torsion-free,  $h:=\prod_{i=1}^mx_{n+i}$ and  $S=R[x_{n+1}, \ldots, x_{n+m}]$, then $L:=JS+ hx_\alpha x_\beta S$  is a   normally  torsion-free square-free monomial ideal in $S$.   
\end{itemize}
\end{theorem}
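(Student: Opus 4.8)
The plan is to prove the two parts in sequence, with part (i) feeding directly into part (ii). For part (i), the key observation is that $I = J + (x_\alpha x_\beta)$ with $x_\alpha x_\beta \in \mathcal{G}(I)$ forces $J$ to be exactly $I$ with one generator removed; in particular $J \subsetneq I$, so if $J$ is a demotion it is automatically a proper one. To see that $J$ is a demotion, I would first reduce to the combinatorial description: write $\mathfrak{q} := (x_\alpha, x_\beta)$, and observe that $I = J \cap \bigl(\text{stuff}\bigr)$ is not quite the right framing; instead I would try to realize the situation as an instance of Lemma \ref{NCH-1} / Proposition \ref{Demotion-3}. The cleanest route: since $J$ is square-free and normally torsion-free, $J = \bigcap_{\mathfrak{p} \in \mathrm{Ass}(R/J)} \mathfrak{p}$ and $J^s = \bigcap \mathfrak{p}^s$ for all $s$ by Facts \ref{proposition 4.3.29}, \ref{exercise 6.1.25}, and Theorem \ref{Villarreal1}. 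One must then check that $I \cap \mathfrak{q}'$ for a suitable prime $\mathfrak{q}'$ recovers $J$; but actually the relation to exploit is the reverse — I expect that $J$ itself plays the role of "$I \cap \mathfrak{q}$" in Proposition \ref{Demotion-3} with a different ambient ideal. Concretely, I would look for a square-free normally torsion-free ideal $I'$ and a monomial prime $\mathfrak{q}$ with $J = I' \cap \mathfrak{q}$, $I = I'$; the natural guess is $I' = I$ is wrong since $J \subsetneq I$ and $J$ is not of the form $I \cap \mathfrak{q}$ in general. So instead I would argue directly: show $I^r J^s = I^{r+s} \cap J^s$ by the monomial-membership method used in Propositions \ref{Demotion-1} and \ref{Demotion-4}, splitting a monomial $u \in I^{r+s} \cap J^s$ using that the only generator of $I$ not in $J$ is $x_\alpha x_\beta$, and peeling off $s$ copies of generators of $J$ from the "$J^s$ part" while keeping $r$ copies of generators of $I$ (possibly including $x_\alpha x_\beta$) for the "$I^r$ part."

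For part (ii), I would set $\mathfrak{q} := (x_\alpha, x_\beta) \subseteq R$, extend to $S = R[x_{n+1}, \dots, x_{n+m}]$, and recognize $L = JS + h\, x_\alpha x_\beta\, S$. The strategy is to write $L$ as an intersection of nice pieces and invoke the known normally-torsion-free preservation results: Lemma \ref{NTF1}, Lemma \ref{NCH-1}, Theorem \ref{NTF.Th.2.5}, and Proposition \ref{Intersection}. First I would express $L$ via Fact \ref{fact1}(ii) (distributivity of sums over intersections for monomial ideals): since $JS = \bigcap_i \mathfrak{p}_i S$ over $\mathfrak{p}_i \in \mathrm{Ass}(R/J)$, we get
\[
L = JS + h x_\alpha x_\beta S = \bigcap_i \bigl(\mathfrak{p}_i S + h x_\alpha x_\beta S\bigr).
\]
Each $\mathfrak{p}_i$ either contains $x_\alpha x_\beta$ (i.e., contains $x_\alpha$ or $x_\beta$) or not; when it does, $\mathfrak{p}_i S + h x_\alpha x_\beta S = \mathfrak{p}_i S$, and when it does not, $\mathfrak{p}_i S + h x_\alpha x_\beta S$ is a square-free ideal whose normal torsion-freeness I would want to deduce from the hypothesis that $I = J + (x_\alpha x_\beta)$ is normally torsion-free, together with the monomial operations of Section \ref{Section 5}. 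An alternative, cleaner framing: apply Lemma \ref{NTF1} to the normally torsion-free square-free ideal $I$ to get that $IS' \cap (x_\alpha x_\beta, x_{n+1}, \dots, x_{n+m})$-type constructions are normally torsion-free, then use the weighting/multiple operations (Propositions \ref{PRO.Weighting}, \ref{Multiple}) to pass between $x_\alpha x_\beta$ and $h x_\alpha x_\beta$. The real engine, though, should be a direct appeal to Lemma \ref{NCH-1}: part (i) gives that $J$ is a demotion of $I$, so $I^r J^s = I^{r+s} \cap J^s$; combined with Proposition \ref{Demotion-3}'s converse-flavored analysis and the structure of $\mathrm{Ass}$, one shows $I^r J^s$ is normally torsion-free (this is essentially Theorem \ref{NTT-1} applied with the hypothesis (ii) of that theorem verified because the new associated prime $\mathfrak{q} = (x_\alpha, x_\beta)$ of $J$... wait, $(x_\alpha, x_\beta)$ contains members of $\mathrm{Ass}(R/I)$ in general). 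So instead I would verify directly that $L$ has no embedded primes and satisfies $L^k = L^{(k)}$ via Fact \ref{proposition 4.3.29}.

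The main obstacle I anticipate is part (ii): specifically, proving that the "mixed" ideal $JS + h x_\alpha x_\beta S$ is normally torsion-free requires controlling the associated primes of all its powers, and the generator $h x_\alpha x_\beta$ is neither a variable nor square-free-looking in the naive sense ($h x_\alpha x_\beta = x_\alpha x_\beta x_{n+1} \cdots x_{n+m}$ is square-free, good, but it mixes old and new variables). The cleanest path is probably: (1) show $L$ is square-free (clear, since all generators are square-free monomials in distinct variables), (2) compute a primary decomposition of $L$ using Fact \ref{fact1} and Lemma \ref{LEM.Deletion}-style bookkeeping, (3) use Proposition \ref{Intersection} to reduce $L^{(k)}$ to an intersection of symbolic powers of the components, and (4) match $L^k$ against that intersection using part (i) (the demotion equality $I^r J^s = I^{r+s} \cap J^s$) to handle the cross terms between $JS$ and the component $(x_\alpha, x_\beta, x_{n+1}, \dots, x_{n+m})S \cap \cdots$. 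Concretely, I expect $L$ decomposes as $JS \cap \bigl(\text{the components of } IS \text{ that were "split off"}\bigr)$ intersected with pieces carrying the new variables, and verifying the max-flow min-cut / $L^k = L^{(k)}$ identity there is where the work concentrates. If that direct computation proves unwieldy, the fallback is to realize $L$ as an expansion or weighting of a smaller normally torsion-free ideal already handled by Lemma \ref{NTF1} or Theorem \ref{NTF.Th.2.5}, and quote the corresponding operation-preservation proposition from Section \ref{Section 5}.
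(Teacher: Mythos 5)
The proposal is a collection of hedged strategies rather than a committed argument, and in both parts it misses the mechanism that actually makes the proof go.

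For part (i), your ``peeling'' sketch is not shown to work and does not explain where the hypothesis that $J$ is normally torsion-free enters. If $u$ is a monomial in $I^{r+s}\cap J^s$, then $u$ is divisible by $\operatorname{lcm}(g,g')$ for some minimal generator $g$ of $I^{r+s}$ and $g'$ of $J^s$, and there is no elementary reason this lcm can be refactored as (generator of $I^r$)$\cdot$(generator of $J^s$). The paper instead expands $I^{r+s}=(J+(x_\alpha x_\beta))^{r+s}=\sum_{c=0}^{r+s}J^c(x_\alpha x_\beta)^{r+s-c}$, distributes the intersection with $J^s$ via Fact~\ref{fact1}(i), observes that the terms with $c\ge s$ recombine into $I^rJ^s$, and reduces to the single technical containment $J^c(x_\alpha x_\beta)^{r+s-c}\cap J^s\subseteq (x_\alpha x_\beta)^r J^s$ for $0\le c\le s$. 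That containment is exactly where normal torsion-freeness is used: one writes $J^c=\bigcap_{\mathfrak{p}\in\operatorname{Min}(J)}\mathfrak{p}^c$ and $J^s=\bigcap\mathfrak{p}^s$ via Facts~\ref{proposition 4.3.29}, \ref{exercise 6.1.25} and Theorem~\ref{Villarreal1}, pushes the monomial factor $(x_\alpha x_\beta)^{r+s-c}$ inside the intersection by Fact~\ref{exercise 7.9.1}(a), and then does a prime-by-prime degree count on lcm's of generators. None of this is visible in your sketch.

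For part (ii), your high-level outline (square-free, decompose, reduce $L^{(k)}$ via Proposition~\ref{Intersection}, match against $L^k$ via part (i)) is structurally on track, but you propose the wrong decomposition and never carry out the matching step. Decomposing $L=\bigcap_i(\mathfrak{p}_iS+hx_\alpha x_\beta S)$ over $\operatorname{Min}(J)$ leaves you with many components whose normal torsion-freeness is not obvious. The paper uses the much cleaner two-term splitting $L=JS+hx_\alpha x_\beta S=(J,h)\cap(J,x_\alpha x_\beta)=(J,h)\cap I$ (valid by Fact~\ref{fact1}(ii) since $\operatorname{supp}(h)\cap\operatorname{supp}(I)=\emptyset$). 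Then $(J,h)$ is normally torsion-free by Theorem~\ref{NTF.Th.2.5} and $I$ is by hypothesis, so Proposition~\ref{Intersection} gives $L^{(k)}=(J,h)^k\cap I^k$. The final step you gesture at but do not perform is the binomial computation
\[
(J,h)^k\cap I^k=\sum_{a=0}^k\bigl(h^{k-a}J^a\cap I^k\bigr)=\sum_{a=0}^k h^{k-a}J^aI^{k-a}=(J+hI)^k=L^k,
\]
where the middle equality uses part (i) (the demotion identity $J^a\cap I^k=I^{k-a}J^a$) together with the disjointness of $\operatorname{supp}(h)$ from $\operatorname{supp}(I)$. Without identifying the two-term decomposition and this computation, the proof of (ii) does not close.
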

 
 \begin{proof}
(i)  Set  $J:=(u_1, \ldots, u_t)$.  We want to show that  $I^rJ^s=I^{r+s} \cap J^s$ for all $r,s \geq 0$. 
 Clearly,  the  claim is true for $r=0$ or    $s=0$ or both of them, so we fix  $r,s \geq 1$.   Due to  $I=J+(x_\alpha x_\beta)$, this implies  the following equalities
  \begin{align*}
  I^{r+s}\cap J^s=& (J+ (x_\alpha x_\beta))^{r+s} \cap J^s\\
  =& (\sum_{c=0}^{r+s} J^{c}(x_\alpha x_\beta)^{r+s-c}) \cap J^s\\
  =& \sum_{c=0}^{r+s} (J^{c}(x_\alpha x_\beta)^{r+s-c} \cap J^s)\\
  =& \sum_{c=0}^{s-1} (J^{c}(x_\alpha x_\beta)^{r+s-c} \cap J^s) +
   \sum_{c=s}^{r+s} (J^{c} (x_\alpha x_\beta)^{r+s-c} \cap J^s).
  \end{align*}
  Let $s\leq c \leq r+s$. Hence,  $J^{c} (x_\alpha x_\beta)^{r+s-c}  \subseteq J^s$, and we therefore get  
  $$\sum_{c=s}^{r+s} (J^{c} (x_\alpha x_\beta)^{r+s-c} \cap J^s)= \sum_{c=s}^{r+s} J^{c} (x_\alpha x_\beta)^{r+s-c} 
   =J^s \sum_{\rho=0}^{r} J^{\rho} (x_\alpha x_\beta)^{r- \rho}= J^sI^r.$$
    We claim that    $J^{c} (x_\alpha x_\beta)^{r+s-c} \cap J^s \subseteq J^s  (x_\alpha x_\beta)^{r}$ for each  $0\leq c \leq s$. 
  Let $0 \leq c \leq s$. By virtue of   $J$ is a normally torsion-free square-free monomial ideal,  
  Facts \ref{proposition 4.3.29}, \ref{exercise 6.1.25} and Theorem \ref{Villarreal1} imply that
   $J^c = J^{(c)}= \bigcap_{\mathfrak{p}\in \mathrm{Min}(J)} \mathfrak{p}^{c}$   and  $J^s = J^{(s)}= \bigcap_{\mathfrak{p}\in \mathrm{Min}(J)} \mathfrak{p}^{s}$. By  part (a) of Fact \ref{exercise 7.9.1}, we can deduce $ J^c (x_\alpha x_\beta)^{r+s-c}= \bigcap_{\mathfrak{p}\in \mathrm{Min}(J)}((x_\alpha x_\beta)^{r+s-c}  \mathfrak{p}^{c})$, and so  the following equalities
  \begin{align*}
   J^{c}(x_\alpha x_\beta)^{r+s-c} \cap J^s =&    \left((x_\alpha x_\beta)^{r+s-c} \bigcap_{\mathfrak{p}\in \mathrm{Min}(J)} \mathfrak{p}^{c}\right) \cap 
     \left(\bigcap_{\mathfrak{p}\in \mathrm{Min}(J)} \mathfrak{p}^{s}\right)\\
  =& \left(\bigcap_{\mathfrak{p}\in \mathrm{Min}(J)}((x_\alpha x_\beta)^{r+s-c}  \mathfrak{p}^{c})\right) \cap 
  \left(\bigcap_{\mathfrak{p}\in \mathrm{Min}(J)} \mathfrak{p}^{s}\right)\\
  =& \bigcap_{\mathfrak{p}\in \mathrm{Min}(J)}\left(((x_\alpha x_\beta)^{r+s-c}  \mathfrak{p}^{c}) \cap 
   \mathfrak{p}^{s}\right).
  \end{align*}
  Choose an arbitrary element   $\mathfrak{p}\in \mathrm{Min}(J)$. In what follows, our aim is to prove 
  $((x_\alpha x_\beta)^{r+s-c} \mathfrak{p}^{c}) \cap    \mathfrak{p}^{s} \subseteq (x_\alpha x_\beta)^{r}  \mathfrak{p}^{s}$. 
     Take  $v\in \mathcal{G}(((x_\alpha x_\beta)^{r+s-c}  \mathfrak{p}^{c}) \cap    \mathfrak{p}^{s})$. Then 
    $v=\mathrm{lcm}((x_\alpha x_\beta)^{r+s-c}f,g)$, 
    where $f\in \mathcal{G}({\mathfrak{p}}^{c})$ and $g\in  \mathcal{G}({\mathfrak{p}}^{s})$.  Because   $f\in \mathcal{G}({\mathfrak{p}}^{c})$ (respectively, $g\in  \mathcal{G}({\mathfrak{p}}^{s})$), one may  write $f=\prod_{i=1}^nx^{e_i}_i$ 
    (respectively,   $g=\prod_{i=1}^nx^{d_i}_i$) such that  $e_i\geq 0$ for all $i=1, \ldots, n$, $\sum_{i=1}^n e_i=c$, and $x_i\in \mathfrak{p}$ when 
    $e_i>0$     (respectively, $d_i\geq 0$ for all $i=1, \ldots, n$, $\sum_{i=1}^n d_i=s$, and $x_i\in \mathfrak{p}$ when $d_i>0$). 
        It follows  readily   from  $v=\mathrm{lcm}((x_\alpha x_\beta)^{r+s-c}f,g)$ that     
  $$v=\left(\prod_{i=1, i\notin\{\alpha, \beta\}}^{n} x_{i}^{\max\{e_i,d_i\}}\right) x_{\alpha}^{\max\{r+s-c + e_\alpha, d_\alpha\}}  
   x_{\beta}^{\max\{r+s-c + e_\beta,d_\beta\}}.$$
   Since $\deg_{x_{\alpha}} v= \max\{r+s-c + e_\alpha, d_\alpha\}$ and $\deg_{x_{\beta}} v=\max\{r+s-c + e_\beta, d_\beta\}$, we get  
      $\deg_{x_{\alpha}} v \geq r+s-c +e_\alpha \geq r$ and $\deg_{x_{\beta}} v \geq r+s-c + e_\beta \geq r$. By virtue of 
   $\sum_{i=1}^n e_i=c$ and $\deg_{x_i} v =\max\{e_i, d_i\}$ for all $i\in \{1, \ldots, n\}\setminus \{\alpha, \beta\}$, we can derive    
   $$\sum_{i=1}^n\deg_{x_i} v \geq \sum_{i=1, i\notin\{\alpha, \beta\}}^ne_i +r+s-c +e_\alpha + r+s-c+e_\beta \geq s+2r.$$
   From   $\deg_{x_{\alpha}} v \geq r$, $\deg_{x_{\beta}} v \geq  r$, and $\sum_{i=1}^n\deg_{x_i} v \geq s+2r$, we have  
   $v\in (x_\alpha x_\beta)^{r}  \mathfrak{p}^{s}$. This gives   $((x_\alpha x_\beta)^{r+s-c} \mathfrak{p}^{c}) \cap    \mathfrak{p}^{s} \subseteq (x_\alpha x_\beta)^{r}  \mathfrak{p}^{s}$  for all   $\mathfrak{p}\in \mathrm{Min}(J)$,  and so  $J^{c}(x_\alpha x_\beta)^{r+s-c} \cap J^s \subseteq J^s(x_\alpha x_\beta)^{r}$
  for all $0\leq c \leq s$. This yields that     $I^rJ^s=I^{r+s} \cap J^s$ for all $r,s \geq 0$, that is, $J$ is a proper demotion of $I$. 
 
(ii)  According to  Theorem \ref{Villarreal1}, it is enough to show that  $L^k=L^{(k)}$ for all  $k\geq 1$. Fix   $k\geq 1$.  
 Since     $L=JS+ hx_\alpha x_\beta S$ and  $\mathrm{supp}(h) \cap  \mathrm{supp}(I)=\emptyset$, it follows from Fact \ref{fact1} that $L=(J, h) \cap (J, x_\alpha x_\beta)=J+ hI$.  On account of   Proposition \ref{Intersection}, we get  $L^{(k)}=(J, h)^{(k)} \cap (J, x_\alpha x_\beta)^{(k)}$.  In light of $J$ is normally torsion-free and  $\mathrm{supp}(h) \cap  \mathrm{supp}(J)=\emptyset$, one can  deduce from Theorem \ref{NTF.Th.2.5}  that $(J, h)$ is normally 
 torsion-free, and hence $(J, h)^{(k)}=(J, h)^{k}$.  Since $I$ is normally torsion-free, we get $(J, x_\alpha x_\beta)^{(k)}= (J, x_\alpha x_\beta)^{k}$.
  This gives rise to    $L^{(k)}=(J, h)^{k} \cap (J, x_\alpha x_\beta)^{k}$.  It follows now from part (i),  Fact \ref{fact1} and  the binomial expansion theorem
   that 
 \begin{align*}
  (J, h)^k  \cap (J, x_\alpha x_\beta)^k=& (J, h)^k  \cap I^k=   \left(\sum_{\alpha=0}^{k}h^{k-\alpha}J^{\alpha}\right) \cap  I^k\\
  = &\sum_{\alpha=0}^{k} (h^{k-\alpha}J^{\alpha} \cap  I^k) \overset{(\text{i})}{=}  \sum_{\alpha=0}^{k} h^{k-\alpha}J^{\alpha}   I^{k-\alpha} \\
  =& (J+ hI)^k\\
  =& L^k.
  \end{align*}
Therefore,   we get $L^{k}=L^{(k)}$, and  Theorem \ref{Villarreal1} yields  that $L$ is normally torsion-free, as required. 
 \end{proof} 
 

\begin{example}
\em{
Let   $G$ be  a  bipartite graph with $V(G)=\{x_1, \ldots, x_n\}$ and $I=(u_1, \ldots, u_t, u_{t+1})$ be  its edge ideal. 
 Put $J:=(u_1, \ldots, u_t)$. Based on Theorem  \ref{NTT-2},   $J$ is  a demotion of $I$.
 }
 \end{example}


\section{Comparing reductions and demotions of ideals} \label{Section 7}

This section is mainly devoted to a comparison between reductions and demotions of ideals.  We start by revisiting the definition of reductions of ideals, and then establish that a number of its characteristic properties are not preserved in the case of demotions of ideals. For this purpose, we present several counterexamples.

 \begin{definition}(\cite[Definition 1.2.1]{HS})
\em{
Let   $J \subseteq I$ be ideals. Then  $J$ is said to be a \textit{reduction} of $I$ if there exists a nonnegative integer $n$ such that 
$I^{n+1} = JI^n$.
}
\end{definition}


\begin{example} \label{Example-1} 
\em{
(i) Let $R=K[x_1, \ldots, x_7]$, $J:=(x_2x_4, x_2x_5, x_1x_4, x_5x_6, x_4x_7)$ and $I:=J+(x_1x_3)$. 
Consider the graph  $G=(V(G), E(G))$ with vertex set  $V(G)=\{x_1, x_2, x_4, x_5, x_6, x_7\}$ and the following edge set 
$$E(G)=\{\{x_2,x_4\}, \{x_2,x_5\}, \{x_1,x_4\}, \{x_5,x_6\}, \{x_4,x_7\}\}.$$
 Since $J$ is the edge ideal of the bipartite graph $G$, this implies that $J$ is normally torsion-free. 
It follows now from Theorem \ref{NTT-2} that $J$ is  a demotion of $I$. In what follows, we show   $I^{n+1} \neq I^n  J$ for every $n\geq 0$.
 Consider the monomial $m=x_1^{n+1} x_3^{n+1} \in I^{n+1}.$  
Suppose, for contradiction, that \(m \in I^n  J\). Then there exist monomials \(f \in I^n\) and \(g \in J\) such that $m = f  g.$ 
Note that \(f\) is a product of \(n\) generators of \(I\), and each generator of \(I\) is either \(x_1 x_3\) or one of the generators of \(J\). Therefore, \(f\) can only contain variables from \(\{x_1, x_2, x_3, x_4, x_5, x_6, x_7\}\), but to produce \(x_1^{n+1} x_3^{n+1}\) exactly, \(f\) must be a power of \(x_1 x_3\) alone.  
On the other hand, every generator of \(J\) contains at least one variable outside \(\{x_1, x_3\}\), that is, $x_2x_4,\, x_2x_5,\, x_1x_4,\, x_5x_6,\, x_4x_7.$  
Hence, any \(g \in J\) necessarily introduces a variable not in \(\{x_1, x_3\}\). Multiplying \(f\) by such a \(g\) would produce a monomial containing an ``extra" variable, which contradicts the fact that \(m\) involves only \(x_1\) and \(x_3\).  Therefore, no such factorization \(m = f  g\) exists, and we conclude
 $m \notin I^n  J.$ Consequently, $I^{n+1} \neq I^n  J$ for all $n\geq 0$. This shows that $J$ is not a reduction of $I$. 

(ii) Suppose that   $I=(x^3, y^3, z^3, x^2y, xy^2, y^2z, yz^2, x^2z, xz^2)$ and $J=(x^3, y^3, z^3)$ 
in the polynomial ring $R=K[x,y,z]$. Using  \textit{Macaulay2} \cite{GS}, we find that  $I^3=JI^2$, while $IJ^3\neq I^4\cap J^3$. Hence, 
we can deduce that $J$ is a reduction of $I$, but $J$ is not a demotion of $I$.  
}
\end{example}


\begin{proposition}(\cite[Proposition 1.2.4]{HS}) \label{reduction-1}
Let $K \subseteq J \subseteq I$ be ideals in $R$. 
\begin{itemize}
\item[(i)] If $K$ is a reduction of $J$ and $J$ is a reduction of $I$, then $K$ is a reduction of $I$.
\item[(ii)] If $K$ is a reduction of $I$, then $J$ is a reduction of $I$.
\item[(iii)] If $I$ is finitely generated, $J = K + (r_1, \ldots, r_k)$, and K is a reduction of   $I$, then $K$ is a reduction of $J$.
\end{itemize}
\end{proposition}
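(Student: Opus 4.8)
The plan is to dispatch parts (i) and (ii) by direct manipulation of the defining equality of a reduction, and to reduce part (iii)—the only genuinely delicate point—to a standard integral-dependence argument. The single computational fact underlying everything is: if $I^{n+1}=JI^n$ for some $n\ge 0$, then $I^{n+k}=J^kI^n$ for all $k\ge 0$; this follows by an immediate induction on $k$, multiplying $I^{n+k}=J^kI^n$ by $I$ and invoking $I^{n+1}=JI^n$ once more.

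For (ii), if $K$ is a reduction of $I$, say $I^{n+1}=KI^n$, then $K\subseteq J\subseteq I$ gives the chain $I^{n+1}=KI^n\subseteq JI^n\subseteq I\cdot I^n=I^{n+1}$, forcing $JI^n=I^{n+1}$, so $J$ is a reduction of $I$ with the same exponent. For (i), pick $n$ with $I^{n+1}=JI^n$ and $m$ with $J^{m+1}=KJ^m$; then by the fact above $I^{n+m+1}=J^{m+1}I^n=(KJ^m)I^n=K(J^mI^n)=KI^{n+m}$, so $K$ is a reduction of $I$ with exponent $n+m$.

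Part (iii) carries the real content, and the plan has three stages. First I would show that each $r_i$ is integral over $K$: since $I$ is finitely generated the module $M:=I^n$ is finitely generated, and $K$ being a reduction of $I$ gives $r_iM\subseteq I^{n+1}=KM$; the determinant trick for $M$ over the ideal $K$ then produces a monic relation $r_i^{\,m}+c_1r_i^{\,m-1}+\cdots+c_m$ with $c_j\in K^j$ annihilating $M$, and multiplying this relation by $r_i^{\,n}\in I^n=M$ turns it into an honest integral equation $r_i^{\,m+n}+c_1r_i^{\,m+n-1}+\cdots+c_mr_i^{\,n}=0$ for $r_i$ over $K$ (the missing lower-degree coefficients being $0\in K^j$). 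Second, I would prove the lemma that if $r$ is integral over $K$ then $K$ is a reduction of $K+(r)$: writing $L:=K+(r)$ one has $L^d=Rr^d+KL^{d-1}$, and an integral equation of degree $d$ rewrites $r^d$ as a sum of terms lying in $K^jr^{d-j}=K\cdot(K^{j-1}r^{d-j})\subseteq KL^{d-1}$ for $1\le j\le d$, whence $L^d=KL^{d-1}$. Third, since an integral equation over $K$ is a fortiori an integral equation over any ideal containing $K$, the lemma shows that $K+(r_1,\dots,r_{j-1})$ is a reduction of $K+(r_1,\dots,r_j)$ for each $j$; starting from $K$ being a reduction of $K+(r_1)$ and applying part (i) repeatedly, one concludes that $K$ is a reduction of $J=K+(r_1,\dots,r_k)$. (Alternatively, (iii) can be phrased via the equality of integral closures $\overline{K}=\overline{I}$, but the elementary route above is self-contained given only the definition in the excerpt.)

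The step I expect to be the main obstacle is the first stage of (iii): correctly invoking the determinant trick for a module over an \emph{ideal} rather than over a single element, so as to obtain coefficients in the successive powers $K^j$, and then clearing the annihilator by the multiplication $r_i^{\,n}\in I^n$. A secondary point needing a line of justification is that the integral equation witnessing $r_j\in\overline{K}$ still witnesses integrality of $r_j$ over each enlarged ideal $K+(r_1,\dots,r_{j-1})$—this is precisely what legitimizes the iterated application of part (i) in the final chaining.
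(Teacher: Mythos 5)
The paper does not prove this proposition; it is quoted verbatim as a citation to Huneke--Swanson [HS, Proposition~1.2.4], so there is no internal proof to compare against. Your argument is correct and is essentially the standard one: parts (i) and (ii) are handled exactly by the containment and telescoping manipulations you give, and for (iii) the chain ``determinant trick over the ideal $K$ applied to the finitely generated module $I^n$ $\Rightarrow$ each $r_i$ is integral over $K$ $\Rightarrow$ $K$ is a reduction of $K+(r)$ whenever $r$ is integral over $K$ $\Rightarrow$ iterate via (i)'' is sound, including the two points you flagged as delicate: clearing the annihilator by multiplying the Cayley--Hamilton relation through by $r_i^{\,n}\in I^n$, and the observation that an integral equation over $K$ remains one over any larger ideal $K+(r_1,\dots,r_{j-1})$.
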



\begin{example}\label{Example-2}
\em{
Let $I=(x_3,x_1x_2,x_4x_5x_6)$ be a square-free monomial ideal in $R=K[x_1, \ldots, x_8]$. According to Theorem  \ref{NTF.Th.2.5}, we can derive that $I$ is normally torsion-free. Moreover, it is easy to check that 
\begin{align*}
I=& (x_1,x_3,x_4) \cap (x_1,x_3,x_5) \cap(x_1,x_3,x_6) \cap (x_2,x_3,x_4)  \cap (x_2,x_3,x_5)\\
& \cap(x_2,x_3,x_6).
\end{align*}
Now, set $J:=I\cap (x_1,x_7)$  and $L:=J\cap (x_4,x_8)$. It follows readily from Lemma \ref{NTF1} that both $J$ and $L$ are normally torsion-free as well. 
 Using Proposition \ref{Demotion-3}, we can conclude that  $J$ is  a demotion of $I$, and  $L$ is  a demotion of $J$. Since 
 $(x_1x_2x_4x_5x_6)^2 \in I^4\cap L^2 \setminus I^2L^2$, this implies that   $L$ is not a demotion of $I$. 
  This demonstrates that statement (i) in Proposition \ref{reduction-1} does not hold for demotions of ideals in general.
}
\end{example}


\begin{example}\label{Example-3}
\em{
Let $R=K[x_1, \ldots, x_7]$ and consider the following square-free monomial ideal in $R$
\begin{align*}
I=&(x_3,x_1x_2,x_4x_5x_6)= (x_1,x_3,x_4) \cap (x_1,x_3,x_5) \cap(x_1,x_3,x_6)\\
& \cap (x_2,x_3,x_4)  \cap (x_2,x_3,x_5)  \cap(x_2,x_3,x_6).
\end{align*}
Moreover, assume  that $J:=I\cap (x_4,x_5,x_7)$  and  $L:=I\cap (x_4,x_7)$. It is routine to check that $L\subseteq J \subseteq I$.  We can deduce  from 
 Theorem  \ref{NTF.Th.2.5} that  $I$ is normally torsion-free, and by Lemma \ref{NTF1}, we obtain  $L$ is also   normally torsion-free.
  It follows immediately from Proposition \ref{Demotion-3} that 
 $L$ is  a demotion of $I$. On account of $x_3^3x_4x_5x_6 \in I^4\cap J^2 \setminus I^2J^2$, we can derive that  $J$ is not  a demotion of $I$. 
This shows that statement (ii) in Proposition \ref{reduction-1} is, in general, not valid for demotions of ideals.   
}
\end{example}


\begin{lemma}(\cite[Lemma 8.1.10]{HS})\label{reduction-2}
Let $J \subseteq I$ be a reduction. Then $\sqrt{I} = \sqrt{J},$ $\mathrm{Min}(R/I) = \mathrm{Min}(R/J),$ and $\mathrm{ht}(I) = \mathrm{ht}(J).$
\end{lemma}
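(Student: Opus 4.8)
The plan is to reduce all three assertions to the single equality $\sqrt{I}=\sqrt{J}$, and to obtain that equality directly from the definition of a reduction. The only place where the reduction hypothesis is genuinely used (rather than the mere inclusion $J\subseteq I$) is one of the two radical containments, and that step is immediate; there is no serious obstacle in this lemma.

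First I would record the trivial containment: since $J\subseteq I$, taking radicals gives $\sqrt{J}\subseteq\sqrt{I}$. For the reverse containment I would unwind the definition of a reduction: there is a nonnegative integer $n$ with $I^{n+1}=JI^{n}$. In particular $I^{n+1}=JI^{n}\subseteq J$, and hence $\sqrt{I}=\sqrt{I^{n+1}}\subseteq\sqrt{J}$. Combining the two inclusions yields $\sqrt{I}=\sqrt{J}$.

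Next I would deduce the statement on minimal primes. The set of prime ideals containing a given ideal depends only on its radical, so $V(I)=V(\sqrt{I})=V(\sqrt{J})=V(J)$; taking minimal elements on both sides gives $\mathrm{Min}(R/I)=\mathrm{Min}(R/J)$. Finally, the height of an ideal equals the minimum of the heights of its minimal primes, so $\mathrm{ht}(I)=\min\{\mathrm{ht}(\mathfrak{p}):\mathfrak{p}\in\mathrm{Min}(R/I)\}=\min\{\mathrm{ht}(\mathfrak{p}):\mathfrak{p}\in\mathrm{Min}(R/J)\}=\mathrm{ht}(J)$, which completes the proof. The ``hard part'', such as it is, is simply the observation that $I^{n+1}=JI^{n}\subseteq J$ forces $\sqrt{I}\subseteq\sqrt{J}$; everything else is formal.
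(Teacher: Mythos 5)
Your argument is correct and is essentially the standard proof of this fact: the paper itself gives no proof but cites \cite[Lemma 8.1.10]{HS}, where the same reasoning appears, namely $I^{n+1}=JI^{n}\subseteq J$ yields $\sqrt{I}=\sqrt{I^{n+1}}\subseteq\sqrt{J}$, the reverse containment is trivial, and the statements on minimal primes and height follow formally from $\sqrt{I}=\sqrt{J}$. Nothing further is needed.
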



\begin{example}\label{Example-4}
\em{
Let $R=K[x_1, \ldots, x_7]$, $J:=(x_2x_4, x_2x_5, x_1x_4, x_5x_6, x_4x_7)$ and $I:=J+(x_1x_3)$.  
According to Example \ref{Example-1}(i),  $J$ is  a demotion of $I$. On the other hand, using  \textit{Macaulay2} \cite{GS}, we detect that
$$\mathrm{Ass}(R/J)=\{(x_5,x_4), (x_6,x_4,x_2), (x_7,x_5,x_2,x_1), (x_7,x_6,x_2,x_1)\} \; \text{and}$$
\begin{align*}
\mathrm{Ass}(R/I)=\{&(x_5,x_4,x_1), (x_5,x_4,x_3), (x_6,x_4,x_2,x_1), (x_7,x_5,x_2,x_1),\\
& (x_7,x_6,x_2,x_1), (x_6,x_4,x_3,x_2)\}.
\end{align*}
Hence, we get  $\mathrm{ht}(J)=2$,  $\mathrm{ht}(I) = 3$, $(x_4,x_5) \in  \mathrm{Min}(J)\setminus \mathrm{Min}(I)$, and  
 $(x_1, x_4, x_5) \in \mathrm{Min}(I)\setminus \mathrm{Min}(J)$. It is also obvious that  $\sqrt{I} \neq  \sqrt{J}$. 
This indicates that Lemma  \ref{reduction-2}  may generally fail for demotions of ideals.
 }
 \end{example}
 

 \begin{proposition}(\cite[Proposition 8.1.5]{HS})\label{reduction-3}
  Let  $J=(a_1, \ldots, a_k) \subseteq I$  be ideals in a ring $R$.
  \begin{itemize}
  \item[(1)]  If  $J$ is a reduction of  $I$, then for any positive integer $m$, $(a^m_1, \dots,  a^m_k)$ and $J^m$ are reductions of $I^m$.
  \item[(2)]   If for some positive integer $m$, $(a^m_1, \ldots, a^m_k)$ or  $J^m$ is a reduction of  $I^m$, then $J$ is a reduction of  $I$.
  \end{itemize}
\end{proposition}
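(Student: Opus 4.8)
The plan is to reduce the whole statement to two elementary observations: the telescoping identity $I^{n+j}=J^jI^n$ that comes for free from a single reduction equation, and a pigeonhole fact about the ideal generated by $m$-th powers of the $a_i$. Throughout I would freely use the transitivity and monotonicity of reductions recorded in Proposition~\ref{reduction-1}.

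For part~(1), I would first record that $I^{n+1}=JI^n$ forces $I^{n+j}=J^jI^n$ for all $j\ge 0$, by the one-line induction $I^{n+j+1}=I\cdot I^{n+j}=I\cdot J^jI^n=J^jI^{n+1}=J^{j+1}I^n$. From this, $J^m$ is a reduction of $I^m$: choosing any $N$ with $mN\ge n$ one gets
\[
(I^m)^{N+1}=I^{mN+m}=J^{mN+m-n}I^n=J^m\bigl(J^{mN-n}I^n\bigr)=J^mI^{mN}=J^m(I^m)^N .
\]
Next, to handle $L:=(a_1^m,\dots,a_k^m)$, I would show $L$ is a reduction of $J^m$ and then appeal to transitivity, Proposition~\ref{reduction-1}(i). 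The ideal $(J^m)^k=J^{mk}$ is generated by the products $a_1^{f_1}\cdots a_k^{f_k}$ with $f_1+\cdots+f_k=mk$; since there are only $k$ exponents, some $f_i\ge m$, and then $a_1^{f_1}\cdots a_k^{f_k}=a_i^m\cdot\bigl(a_1^{f_1}\cdots a_i^{f_i-m}\cdots a_k^{f_k}\bigr)$ lies in $L\cdot J^{m(k-1)}=L(J^m)^{k-1}$. Hence $(J^m)^k\subseteq L(J^m)^{k-1}$, the reverse inclusion being clear, so $(J^m)^k=L(J^m)^{k-1}$ and $L$ is a reduction of $J^m$; combining this with ``$J^m$ is a reduction of $I^m$'' through Proposition~\ref{reduction-1}(i) shows that $L$ is a reduction of $I^m$.

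For part~(2), suppose first that $L=(a_1^m,\dots,a_k^m)$ is a reduction of $I^m$. Since $L\subseteq J^m\subseteq I^m$, Proposition~\ref{reduction-1}(ii) promotes this to ``$J^m$ is a reduction of $I^m$'' (and if $J^m$ is assumed to be a reduction of $I^m$ to begin with, we are already here). Then $(I^m)^{s+1}=J^m(I^m)^s$ for some $s$, i.e.\ $I^{m(s+1)}=J^mI^{ms}$, whence
\[
I^{m(s+1)}=J^mI^{ms}\subseteq J\cdot I^{m-1}I^{ms}=J\,I^{m(s+1)-1}\subseteq I^{m(s+1)},
\]
so $I^{m(s+1)}=J\,I^{m(s+1)-1}$ and $J$ is a reduction of $I$, with reduction number at most $m(s+1)-1$.

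The only delicate point is the pigeonhole step: it should be run on the natural spanning set of $J^{mk}$ (the products of $mk$ of the $a_i$), not on a minimal generating set, so that the argument stays valid over an arbitrary ring $R$; after that, everything is routine bookkeeping with the identity $I^{n+j}=J^jI^n$ together with the transitivity and monotonicity already available from Proposition~\ref{reduction-1}.
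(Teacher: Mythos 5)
The paper states this proposition purely as a citation to \cite[Proposition 8.1.5]{HS} and supplies no proof of its own, so there is no in-paper argument to compare against; your proof is correct and is essentially the standard proof from the cited source. In particular, the telescoping identity $I^{n+j}=J^jI^n$ correctly yields that $J^m$ is a reduction of $I^m$; the pigeonhole count on a spanning monomial $a_1^{f_1}\cdots a_k^{f_k}$ of $J^{mk}$ with $\sum f_i=mk$ (some $f_i\ge m$) correctly gives $(J^m)^k=L\,(J^m)^{k-1}$ with $L=(a_1^m,\dots,a_k^m)$; Proposition~\ref{reduction-1}(i) then finishes part~(1); and for part~(2) the inclusion $J^m\subseteq JI^{m-1}$ together with Proposition~\ref{reduction-1}(ii) is exactly the right mechanism. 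Your caution to run the pigeonhole on the full spanning set of $J^{mk}$ rather than a minimal generating set is well taken, since the proposition is stated over an arbitrary commutative ring.
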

 
We close this paper with the following example. 
 \begin{example}\label{Example-5}
 \em{
  Let  $J=(x_2x_4, x_2x_5, x_1x_4, x_5x_6, x_4x_7)$ and $I=J+(x_1x_3)$ in  $R=K[x_1, \ldots, x_7]$. 
  It follows from Example \ref{Example-1}(i)  that  $J$ is  a demotion of $I$.  Now, put  $L:=((x_2x_4)^2, (x_2x_5)^2, (x_1x_4)^2, (x_5x_6)^2, (x_4x_7)^2)$. 
  Because  $(x_1x_3)(x_2x_4)^2 \in I^3 \cap L \setminus I^2L$, we can conclude  that $L$ is not a demotion of $I^2$. This states that it is possible 
  $J=(a_1, \ldots, a_k)$  is a demotion of  $I$, but  $(a^m_1, \dots,  a^m_k)$ is not  a demotion  of $I^m$ for some $m\geq 1$ (see statement (1) in Proposition 
  \ref{reduction-3}).
  }
\end{example}


\end{document}